\documentclass[a4paper,10pt,reqno]{amsart}

\usepackage[T1]{fontenc}
\usepackage[utf8]{inputenc} 
\usepackage{lmodern}
\usepackage[english]{babel}
\usepackage{amsmath,amsfonts,amssymb,amsthm,enumerate}

%DIMENSIONI della pagina
\voffset=-1.5cm \textheight=21cm \hoffset=-.5cm \textwidth=13cm
\oddsidemargin=1cm \evensidemargin=-.1cm
\footskip=35pt \linespread{1.25}
\parindent=20pt

\usepackage{amssymb}

\numberwithin{equation}{section}

\usepackage{tikz}
\usetikzlibrary{arrows}
%%%% DEFINIZIONE FRECCIA CARINA:INIZIO
\pgfarrowsdeclare{<<<}{>>>}
{
\arrowsize=0.2pt
\advance\arrowsize by .5\pgflinewidth
\pgfarrowsleftextend{-4\arrowsize-.5\pgflinewidth}
\pgfarrowsrightextend{.5\pgflinewidth}
}
{
\arrowsize=0.2pt
\advance\arrowsize by .5\pgflinewidth
\pgfpathmoveto{\pgfpointorigin}
\pgfpathlineto{\pgfpoint{-1.2mm}{-.8mm}}
\pgfusepathqstroke
\pgfpathmoveto{\pgfpointorigin}
\pgfpathlineto{\pgfpoint{-1.2mm}{.8mm}}
\pgfusepathqstroke
}%%%% DEFINIZIONE FRECCIA CARINA:FINE (per usarla, iniziare una figura con \begin{tikzpicture}[>=>>>] )

%%%% DEFINIZIONE FRECCIA GROSSA:INIZIO
\pgfarrowsdeclare{<<<<}{>>>>}
{
\arrowsize=0.2pt
\advance\arrowsize by .5\pgflinewidth
\pgfarrowsleftextend{-4\arrowsize-.5\pgflinewidth}
\pgfarrowsrightextend{.5\pgflinewidth}
}
{
\arrowsize=0.2pt
\advance\arrowsize by .5\pgflinewidth
\pgfpathmoveto{\pgfpointorigin}
\pgfpathlineto{\pgfpoint{-1.8mm}{-1.2mm}}
\pgfusepathqstroke
\pgfpathmoveto{\pgfpointorigin}
\pgfpathlineto{\pgfpoint{-1.8mm}{1.2mm}}
\pgfusepathqstroke
}%%%% DEFINIZIONE GROSSA:FINE (per usarla, iniziare una figura con \begin{tikzpicture}[>=>>>>] )

%% Package ``Teoremi fondamentali'' --- INIZIO %%
\newcounter{mt}
\def\maintheorem#1#2#3{\par \medskip \noindent {\bf Theorem~\mref{#1}}~(#2).~{\it #3}\par}
\def\mref#1{\Alph{#1}}
\def\maintheoremdeclaration#1{\stepcounter{mt}\newcounter{#1}\setcounter{#1}{\arabic{mt}}}
%% Package ``Teoremi fondamentali'' --- FINE %%

%%% Elenco ``Teoremi fondamentali'' di questo articolo
\maintheoremdeclaration{main}
%%%%%%%%%%%%%%%%%%%%%%%%%%%%%%%%%%%%%%%%%%%%%%%%%%%%%%

\usepackage{caption}
\captionsetup[figure]{labelfont={bf,small},font=small}

\usepackage{url}

\def\N{\mathbb{N}}
\def\R{\mathbb{R}}
\def\Q{\mathcal{Q}}
\def\S{\mathbb{S}}
\def\P{\mathbb{P}}
\def\PP{\mathcal{P}}
\def\C{\mathcal{C}}
\def\Ceeb{C_{\rm per}}
\def\h{\mathfrak h}
\def\len{{\rm len}}
\def\Cgr{C_{\rm vol}}
\def\proofof#1{\begin{proof}[Proof of #1]}
\def\angle#1#2#3{#1\widehat{#2}#3}
\def\comp{\subset\subset}
\def\eps{\varepsilon}

\def\doppio#1#2{\stackrel{\hbox{\tiny $#1$}}{\hbox{\tiny $#2$}}}
\def\step#1#2{\par\vspace{5pt}\noindent{\underline{\it Step~#1.}}\emph{ #2}\\}

\def\E{\mathcal{E}}
\def\F{\mathcal F}
\def\Pe{P_{\rm Eucl}}
\def\Ve#1{|#1|_{\rm Eucl}}
\def\haus{\mathcal{H}}
\def\I{\mathcal{I}}

\theoremstyle{plain}
\newtheorem{thm}{Theorem}[section]
\newtheorem{lem}[thm]{Lemma}
\newtheorem{example}[thm]{Example}
\newtheorem{prop}[thm]{Proposition}
\newtheorem{cor}[thm]{Corollary}
\newtheorem{defn}[thm]{Definition}
\newtheorem{rmk}[thm]{Remark}

\setlength{\marginparwidth}{2cm}
\allowdisplaybreaks

%Commenti

%\usepackage{todonotes}
%\newcommand{\vale}[1]{{\color{magenta}{#1}}}
%\newcommand{\giorgio}[1]{{\color{blue}{#1}}}
%\newcommand{\aldo}[1]{{\color{black!20!green}{#1}}}
%\reversemarginpar
%
%\newcommand{\valescomment}[1]{\todo[color=white!80!magenta]{\small #1 \\ \scriptsize\hfill Vale}}
%\newcommand{\valescommentinline}[1]{\todo[color=white!80!magenta, inline]{\small #1 \\ \scriptsize\hfill Vale}}
%
%\newcommand{\giorgioscomment}[1]{\todo[color=white!80!blue]{\small #1 \\ \scriptsize\hfill Giorgio}}
%\newcommand{\giorgioscommentinline}[1]{\todo[color=white!80!blue, inline]{\small #1 \\ \scriptsize\hfill Giorgio}}
%
%\newcommand{\aldoscomment}[1]{\todo[color=black!20!green]{\small #1 \\ \scriptsize\hfill Aldo}}
%\newcommand{\aldoscommentinline}[1]{\todo[color=black!20!green, inline]{\small #1 \\ \scriptsize\hfill Aldo}}

\usepackage{hyperref}
\usepackage{bookmark}
\title[Steiner property for planar clusters. Anisotropic case.]{On the Steiner property for planar minimizing clusters. The anisotropic case.}

\author[V. Franceschi]{Valentina Franceschi$^*$}
\address{$^*$Department of Mathematics, University of Padova}
\email{valentina.franceschi@unipd.it}

\author[A. Pratelli]{Aldo Pratelli$^\flat$}
\address{$^\flat$Department of Mathematics, University of Pisa}
\email{aldo.pratelli@unipi.it}

\author[G. Stefani]{Giorgio Stefani$^\sharp$}
\address{$^\sharp$Scuola Internazionale Superiore di Studi Avanzati (SISSA)}
\email{gstefani@sissa.it}

\date{\today}

%%\keywords{Grushin perimeter, symmetric double bubble, constrained interface,120-degree rule}
%%
%%\subjclass[2010]{53C17, 49Q20}

\begin{document}

\begin{abstract}
In this paper we discuss the Steiner property for minimal clusters in the plane with an anisotropic double density. This means that we consider the classical isoperimetric problem for clusters, but volume and perimeter are defined by using two densities. In particular, the perimeter density may also depend on the direction of the normal vector. The classical ``Steiner property'' for the Euclidean case (which corresponds to both densities being equal to $1$) says that minimal clusters are made by finitely many ${\rm C}^{1,\gamma}$ arcs, meeting in finitely many ``triple points''. We can show that this property holds under very weak assumptions on the densities. In the parallel paper~\cite{FPS1} we consider the isotropic case, i.e., when the perimeter density does not depend on the direction, which makes most of the construction much simpler. In particular, in the present case the three arcs at triple points do not necessarily meet with three angles of $120^\circ$, which is instead what happens in the isotropic case.
\end{abstract}

\maketitle

\section{Introduction}

In recent years, a lot of attention is being paid to isoperimetric problems in $\R^N$ depending on two densities. This means that we are given two l.s.c. functions $g:\R^N\to (0,+\infty)$ and $h:\R^N\times\S^{N-1}\to (0,+\infty)$, usually called \emph{densities}, and the volume and perimeter of any set $E\subseteq\R^N$ of locally finite perimeter are defined by
\begin{align}\label{weightedvolper}
|E| = \int_E g(x)\,dx\,, && P(E) = \int_{\partial^* E} h(x,\nu_E(x))\, d\haus^1(x)\,,
\end{align}
where, as usual, $\partial^* E$ is the reduced boundary of $E$ and, for every $x\in\partial^* E$, $\nu_E(x)$ is the outer normal vector to $E$ at $x$ (see~\cite{AFP} for definitions and properties of sets of finite perimeter). There are several reasons why this problem is attracting a big interest, that we are not going to describe here, we limit ourselves to point out some basic bibliography, more information can be found there and in the references therein~\cite{RCBM08,CMV10,CRS12,MP13,BBCLT16,ABCMP17,DFP,G18,ABCMP19,CGPRS20,FP20}.\par

In this paper we consider the isoperimetric problem for clusters. In other words, we do not want to minimize the perimeter of a single set of given volume, but of a ``cluster'', that is, a group of sets with given volumes. This is not simply the ``sum'' of isoperimetric problems for single sets, because the common boundary is only counted once. A practical example of such a problem is given by soap bubbles, which behave more or less as minimal clusters with the Euclidean density. Of course a single bubble must be a ball; however, when there are two bubbles, the best situation is not given by two distinct balls, but by a cluster with the usual shape of two soap bubbles, which minimize the total perimeter by having a large common portion of the boundary. Also the problem of studying minimal clusters, in the Euclidean case, has been deeply investigated in the last decades, and completely solved for ``double bubbles'', i.e., when the cluster is made by two sets (see~\cite{FABHZ,HMRR,Rei}, and see also~\cite{W,PT0,PT1,PT2} for the case of three or four sets with equal volumes). In the planar case $N=2$ it is known that minimal clusters enjoy a strong regularity property. More precisely, the boundary of any minimal cluster is made by finitely many ${\rm C}^{1,\gamma}$ arcs (and then, by standard regularity, they are actually ${\rm C}^\infty$), which meet in finitely many junction points. Each of these junction points is actually a triple point --that is, exactly three arcs meet-- and the three arcs form three angles of $120^\circ$. This property, usually called \emph{Steiner property}, is now widely known. Two very good references are the classical paper~\cite{Tay} and the recent book~\cite{Mag}. We refer the reader also to~\cite{M94}, where the author studies existence and regularity results for minimal clusters in compact Riemannian surfaces, and to the recent papers~\cite{MN,MN2}, where the multiple bubble problem in the Gaussian space, in the Euclidean space, and on the sphere, are considered.\par

Our goal is to extend the regularity of minimal clusters in the plane to the case when perimeter and volume are given by two densities, and we are able to do this in a wide generality. In the parallel paper~\cite{FPS1} we consider the isotropic case, that is, when the density $h$ only depends on the point but not on the direction of the normal vector, and in that case the whole construction is rather simple. In the more general anisotropic case, that we consider here, the underlying idea is still simple, but several technical points become much more complicated. Moreover, the ``$120^\circ$ property'', which is still true in the isotropic case, becomes false. At this regard, see the discussion in Section~\ref{whichdir}. Considering the anisotropic case is important for several reasons, for instance to treat Riemannian surfaces. In this case, the local expression of the perimeter density is anisotropic, and in particular the ``$120^\circ$ property'', considering the angles in the Euclidean sense on local charts, does not always hold. More in general, our approach can be used to work with Riemannian or Finsler manifolds with density (see for instance~\cite{Morganbook}).\par

To consider the isoperimetric problem for clusters, the first thing to do is to extend the definition~(\ref{weightedvolper}) of volume and perimeter of a single set. For a given $m\geq 2$, a \emph{$m$-cluster} is a collection $\E=\{E_1,\,E_2,\, \dots\, ,\, E_m\}$ of $m$ essentially disjoint sets of locally finite perimeter in $\R^2$, and its \emph{volume} is the vector $|\E|=(|E_1|,\, |E_2|,\, \dots\,,\, |E_m|)\in (\R^+)^m$. We set, for brevity, $E_0=\R^2\setminus (\cup_{i=1}^m E_i)$ and $\partial^* \E=\cup_{i=1}^m \partial^* E_i$. The \emph{perimeter} of a cluster $\E$ is then defined as
\begin{equation}\label{PbE}
P(\E) = \frac{P(\cup_{i=1}^m E_i) + \sum_{i=1}^m P(E_i) }2 \,.
\end{equation}
It is very important to understand the meaning of this definition, which is discussed in detail in Section~\ref{sec:defper} below. Hence, a reader who sees this definition for the first time might want to read that section before going on with this introduction.\par

In order to present our main result, a few definitions are in order. We start with the \emph{strict convexity} and the \emph{uniform roundedness} of $h$ in the second variable.
\begin{defn}[Strict convexity and uniform roundedness in the second variable]\label{strictconv}
Let $h:\R^2\times\S^1\to (0,+\infty)$ be given, and extend it to the whole $\R^2\times\R^2$ by positive $1$-homogeneity, i.e., set $h(x,\lambda\nu)=\lambda h(x,\nu)$ for every $x\in\R^2,\,\lambda\geq 0$ and $\nu\in\S^1$. We say that \emph{$h$ is strictly convex in the second variable} if for every $x\in\R^2$ the unit ball
\begin{equation}\label{unitball}
\C(x) = \big\{ \nu\in\R^2:\, h(x,\nu)\leq 1\big\}
\end{equation}
is strictly convex. This is equivalent to ask that $h\big(x,t\nu+(1-t)\mu\big) < th(x,\nu) + (1-t) h(x,\mu)$ for every $x\in\R^2$ and $0<t<1$, and for every two non-zero vectors $\nu,\,\mu\in\R^2$ which are not positively parallel (i.e., $\nu=\lambda\mu$ for some $\lambda>0$). We say that \emph{$h$ is locally uniformly round in the second variable} if the infimum of the generalized curvatures of the balls $\C(x)$ with $x$ in any bounded set $\textsf{D}\subseteq\R^2$ is strictly positive. Formally speaking, there is a constant $c>0$ so that for every $x\in \textsf{D},\, \nu\in \S^1,\, w\in\R^2$ with $|w|\leq 1$ and $w\perp \nu$ one has
\begin{equation}\label{numero}
\frac{h(x,\nu+w) + h(x,\nu-w)}2 \geq h(x,\nu) + c |w|^2\,.
\end{equation}
\end{defn}

The uniform roundedness implies the strict convexity. Indeed, the strict convexity requires the unit balls $\C(x)$ to be strictly convex, so with strictly positive generalised curvature, while the uniform roundedness requires a (strictly positive) uniform bound from below on the curvature.\par

Let us briefly explain the role of these properties in our construction. First of all, the shortest path between two points close to each other can be far from the straight line if $h$ is not strictly convex, and the uniform roundedness is necessary to quantify this closeness (this is related with the so-called ``excess''). As a consequence, it is easy to guess that the regularity may fail without the uniform roundedness.\par

The fact that junction points are necessarily triple points, instead, also requires the regularity of the unit ball, or in other words the fact that $h\in {\rm C}^1$. More precisely, the content of Section~\ref{triplepoints} is to show that multiple points are necessarily triple points and the boundary of $\E$ is done by locally finitely many curves as soon as $h$ is ${\rm C}^1$ and strictly convex in the second variable (the uniform roundedness is not needed there). On the contrary, as discussed in detail in Section~\ref{caseL1}, quadruple points may occur for a density which is not ${\rm C}^1$, but uniformly round, hence also strictly convex. More precisely, we will first observe that quadruple points may occur for the $L^\infty$ density $h(x,\nu)=\max\{|\nu_1|,\,|\nu_2|\}$, which is not ${\rm C}^1$, but also not uniformly round nor strictly convex, and then we will show that the presence of quadruple points is still true with a simple modification of the $L^\infty$ density, which becomes uniformly round but remains not ${\rm C}^1$.\par

Summarizing, to obtain a Steiner property for minimal clusters (that is, $\partial\E$ is done by regular arcs meeting in triple points, see Definition~\ref{defstp}) one has to assume that $h$ is ${\rm C}^1$ and uniformly round in the second variable. Our main result, Theorem~\mref{main}, says that under these assumptions, and together with the same $\eps-\eps^\beta$ property and volume growth condition as in the isotropic case, it is still true that the Steiner property holds.

\begin{defn}[$\eta$-growth condition and $\eps-\eps^\beta$ property for clusters]\label{defge}
Given a power $\eta\ge 1$, an \emph{$\eta$-growth condition} is said to hold if there exist $\Cgr>0,\,R_\eta>0$ such that, for every $x\in\R^2$ and every $r<R_\eta$, the ball $B(x,r)$ has volume $|B(x,r)|\leq \Cgr r^\eta$. We say that the \emph{local $\eta$-growth condition} holds if for any bounded domain $\textsf{D}\comp \R^2$ there exist $\Cgr>0,\,R_\eta>0$ such that the above property holds for balls $B(x,r)\subseteq \textsf{D}$.\par
We say that a cluster $\E$ satisfies the $\eps-\eps^\beta$ property for some $0<\beta\leq 1$ if there exist $R_\beta>0,\,\Ceeb>0,\,\bar\eps>0$ such that, for every vector $\eps\in\R^m$ with Euclidean norm $|\eps|\leq \bar\eps$ and every $x\in\R^2$, there exists another cluster $\F$ such that
\begin{align}\label{propeeb}
\F\Delta \E \subseteq \R^2\setminus B(x,R_\beta)\,, &&
|\F|=|\E|+\eps\,, &&
P(\F)\leq P(\E) + \Ceeb |\eps|^\beta\,.
\end{align}
If this holds then, for each $t\leq \bar\eps$, we call $\Ceeb[t]$ the smallest constant such that the above property is true for every $|\eps|\leq t$. Clearly $t\mapsto \Ceeb[t]$ is an increasing function, and $\Ceeb[\bar\eps]\leq \Ceeb$.
\end{defn}

We underline that both the above assumptions are satisfied for a wide class of densities. In particular, the growth (or local growth) condition clearly holds with $\eta=2$ whenever the density $g$ is bounded (or locally bounded). Concerning the $\eps-\eps^\beta$ property, this is a crucial tool when dealing with isoperimetric problems. It is simple to observe that it is valid with $\beta=1$ for every cluster of locally finite perimeter whenever the density $h$ is regular enough (at least Lipschitz) in the first variable. It is also known that, if $h$ is $\alpha$-H\"older in the first variable, then every cluster of locally finite perimeter satisfies the $\eps-\eps^\beta$ property with
\[
\beta=\frac 1{2-\alpha}\,,
\]
the proof can be found in~\cite{CP} for the special case $g=h$ and in~\cite{PS19} for the general case. The case $\alpha=0$ is particular, also because there is not a unique possible meaning of ``$0$-H\"older function''. More precisely, the $\eps-\eps^{1/2}$ property holds as soon as $h$ is locally bounded. If $h$ is continuous, instead, not only the $\eps-\eps^{1/2}$ property holds, but in addition $\Ceeb[t]\searrow 0$ if $t\searrow 0$. We can be even more precise: $\Ceeb[t]\lesssim \sqrt{\omega_h(\sqrt t)}$, being $\omega_h$ the modulus of continuity of $h$ in the first variable (see~\cite{PS2}). Notice that the required regularity for $h$ here is in the first variable. In particular, $\omega_h$ is defined as $\omega_h(t)=\sup \{|h(x,\nu)-h(y,\nu)|:\, \nu\in\S^1,\, |y-x|\leq t\}$. \par

We can now give the formal definition of the Steiner property, already described above, and of the Dini property.

\begin{defn}[Steiner property]\label{defstp}
A cluster $\E$ is said to satisfy the \emph{Steiner property} if $\partial\E$ is a locally finite union of ${\rm C}^1$ arcs, and each junction point is endpoint of exactly three different arcs, arriving with three different tangent vectors.
\end{defn}

\begin{defn}[Dini property]
We say that an increasing function $\varphi:\R^+\to\R^+$ \emph{satisfies the Dini property} if for every $C>1$ one has
\[
\sum_{n\in\N} \varphi(C^{-n}) < +\infty\,,
\]
which in particular implies $\lim_{t\searrow 0} \varphi(t)=0$. We say that \emph{$\varphi$ satisfies the $1/2$-Dini property} if $\sqrt{\varphi}$ satisfies the Dini property. A uniformly continuous function $f$ is said \emph{Dini continuous} whenever
\[
\int_0^1\frac{\omega_f(t)}t\, dt<+\infty\,,
\]
where $\omega_f$ is the modulus of continuity of $f$. It is known that $f$ is Dini continuous if and only if $\omega_f$ satisfies the Dini property. We say that \emph{$f$ is $1/2$-Dini continuous} if $\omega_f$ satisfies the $1/2$-Dini property.
\end{defn}

Notice that if $\varphi:\R^+\to\R^+$ is non-decreasing, such that $\varphi(0)=0$ and $\alpha$-H\"older, then it satisfies the Dini property. We are now in position to state the main result of the present paper.

\maintheorem{main}{Steiner regularity for minimal clusters}{Let $g:\R^2\to (0,+\infty)$ be a l.s.c. function, and let $h:\R^2\times\S^1\to (0,+\infty)$ be a continuous function, which is ${\rm C}^1$ and uniformly round in the second variable in the sense of Definition~\ref{strictconv}. Let $\E$ be a minimal cluster, and assume that for some $\eta,\,\beta$ the local $\eta$-growth condition holds, as well as the $\eps-\eps^\beta$ property for $\E$. Assume also that $h$ is locally $1/2$-Dini continuous in the first variable, and that either
\begin{enumerate}[(i)]
\item $\eta \beta>1$, or
\item $\eta\beta=1$ and the function $t \mapsto \Ceeb[t]$ satisfies the $1/2$-Dini property.
\end{enumerate}
Then $\E$ satisfies the Steiner property. Moreover, if $\eta\beta>1$ and $h$ is locally $\alpha$-H\"older in the first variable then the arcs of $\partial\E$ are actually ${\rm C}^{1,\gamma}$ with $\gamma=\frac 12\, \min\{\eta\beta-1,\, \alpha\}$.}

It is to be observed that this result strongly generalizes the classical Euclidean case. In fact, we require that $\eta\beta\geq 1$, while in the Euclidean case one has $\eta=2$ and $\beta=1$. This result also extends the isotropic case considered in~\cite{FPS1}, but the proof there is considerably simpler. \par

Concerning the $1/2$-Dini property, it is a standard assumption to get the ${\rm C}^1$ regularity of the boundary, see for instance~\cite{Tam,Mag}. Notice that one can always apply Theorem~\mref{main} if $g$ is locally bounded and $h$ is locally $1/4$-Dini continuous in the first variable (i.e., $\sqrt[4]{\omega_h}$ satisfies the Dini property), since in this case $\eta=2$ and $\beta=1/2$, and the required continuity of $h$ and $\Ceeb$ follows by the fact that $\Ceeb\lesssim \sqrt{\omega_h}$, already observed above.\par

Finally, we remark that under quite mild assumptions (which broadly cover the Euclidean case) the boundedness of the minimal clusters is known, see for instance~\cite{CP,DFP,PS18,PS19}. Of course, whenever optimal clusters are bounded, the arcs given by Theorem~\mref{main} become finite and not just locally finite.

Section~\ref{sec:proof} is dedicated to the proof of Theorem~\mref{main}, whose scheme is summarized in Section~\ref{sec:exproof} below. In Section~\ref{s:final}, we present two important final comments. The first one is about the role of the assumptions of Theorem~\mref{main} on the Steiner regularity of minimal clusters. The uniform roundedness in the second variable is necessary to obtain the regularity of the free boundary. Counterexamples without the strict convexity are trivially found as Wulff shapes for unit balls with flat sides, and the role of the uniform roundedness is clear by the argument in Section~\ref{s:interface}. It is less obvious to understand the role of the ${\rm C}^1$-regularity of $h$, but in Section~\ref{caseL1} we show that it is indeed crucial to prove that multiple junctions can only be triple points, by providing an explicit example of a uniformly round density $h$ which is not ${\rm C}^1$ allowing for minimal clusters with quadruple junctions.

In Section~\ref{whichdir} we discuss which are the admissible directions for the tangents of $\partial^*\mathcal E$ at a triple point and we establish some necessary minimality conditions for optimal triples. In particular, given one of the directions at a triple point, in the symmetric case it is always possible to uniquely determine the pair of the other two directions, while in the asymmetric case there could be no such pair, or more than one, even infinitely many.

We refer the reader to the parallel work \cite[Section~3]{FPS1} for some relevant applications of Theorem~\mref{main}, besides Riemannian manifolds.

\subsection{Meaning of the definition of perimeter for clusters\label{sec:defper}}

This short section is devoted to discuss the definition~(\ref{PbE}) of the perimeter for clusters, which might be a bit obscure at first sight. We do this with the aid of the example in Figure~\ref{Figcluster}, where a $3$-cluster is shown.
\begin{figure}[htbp]
\begin{tikzpicture}[>=>>>,smooth cycle]
\filldraw[fill=red!15!white, draw=black, line width=.8pt] plot [tension=1] coordinates {(0.5,-2) (-0.5,-1) (3,0) (4,-2)};
\filldraw[fill=yellow!15!white, draw=black, line width=.8pt] plot [tension=1] coordinates {(0,0) (1,1) (3,0) (2,-1.5)};
\filldraw[fill=green!15!white, draw=black, line width=.8pt] plot [tension=1] coordinates {(2,1) (3,1.5) (5,0) (4,-1)};
\draw (.85,-1.5) node[anchor=east] {$E_1$};
\draw (1.9,-0.2) node[anchor=east] {$E_2$};
\draw (4,0.3) node[anchor=east] {$E_3$};
\draw[->] (4,1.05) -- (4.46,1.7);
\draw[->] (0,.65) -- (-0.62,1.16);
\draw[->] (-0.83,-1.44) -- (-1.63,-1.44);
\draw[<->] (2.15,-0.03) -- (2.81,.43);
\draw[<->] (2.02,-1.9) -- (2.02,-1.1);
\draw[<->] (3.44,-1.18) -- (3.86,-0.5);
\end{tikzpicture}
\caption{Example of a $3$-cluster.}\label{Figcluster}
\end{figure}
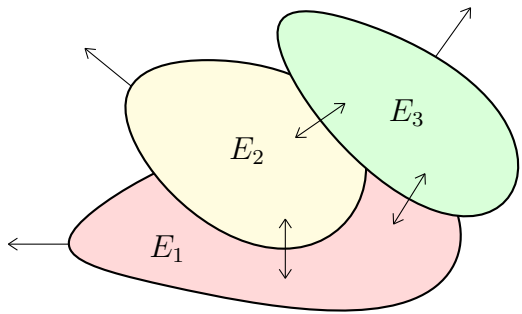
Observe that $\haus^1$-almost every $x\in \partial^*\E$ belongs to either a single one of the boundaries $\partial^* E_i$, $1\leq i\leq m$, or to two ones, and the reduced boundary of $\cup_{i=1}^m E_i$ is done exactly by the points of $\partial^*\E$ which belong to a single one. So, each point of $\partial^* \E$ is counted twice in the expression~(\ref{PbE}), and this explains the reason of the factor $1/2$. Concerning a direction of the normal vectors, for a point $x$ which belongs to a single boundary $\partial^* E_i$ the normal vector at $x$ to $\partial^* E_i$ and to $\cup_{i=1}^m E_i$ is the same, say $\nu(x)$, and then in the expression~(\ref{PbE}) the infinitesimal perimeter of the cluster produced by the point $x$ is simply $h(x,\nu(x))$. In a sense, $\nu(x)$ is ``the normal vector to $\E$ at $x$''. For a point $x$ which belongs to $\partial^* E_i$ and to $\partial^* E_j$ for two different indices $1\leq i,\,j\leq m$, instead, the two normal vectors at $x$ to $E_i$ and to $E_j$ are opposite, so there is a vector $\nu(x)$ such that the contribution of $x$ to the perimeter of the cluster is $\frac 12 h(x,\nu(x))+ \frac 12 h(x,-\nu(x))$. This also explains why in Figure~\ref{Figcluster} at some points of $\partial^*\E$ an arrow of length $1$ is attached, and at other points two opposite arrows of length $1/2$.\par

A couple of final comments are now in order. First of all, in the isotropic case, i.e., when $h$ only depends on $x$, then the contribution of every point $x\in\partial^*\E$ is simply $h(x)$, hence~(\ref{PbE}) can be rewritten in the much simpler form
\[
P(\E) = \int_{\partial^*\E} h(x)\, d\haus^1(x)\,.
\]
There is also an intermediate situation, namely, if the perimeter density is anisotropic but symmetric, that is, $h(x,\nu)=h(x,-\nu)$ for every $x\in\R^2,\,\nu\in\S^1$. Also in this case it is possible to express the perimeter of the cluster $\E$ in a much simpler way than~(\ref{PbE}), that is,
\[
P(\E) = \int_{\partial^*\E} h(x,\nu(x))\, d\haus^1(x)\,,
\]
where the vector $\nu(x)$ is defined as above for every $x\in\partial^*\E$.\par

We remark that the anisotropic but symmetric case is only slightly more complicate to treat than the isotropic case, and most of the difficulties of the case considered in this paper are due to the asymmetry. Roughly speaking, the big issue in the non symmetric case is that the set $E_0$ behaves in a different way than the sets $E_i$ with $1\leq i \leq m$, while in the symmetric case there is locally no difference between the different sets.\par

We also underline that some authors suggest, for the perimeter of a cluster in the case with densities, to use the definition $P(\E) = \frac 12 \sum_{i=0}^m P(E_i)$ in place of~(\ref{PbE}). This is of course a possible choice. However, in this case the contribution to the perimeter of the cluster given by any point $x\in\partial^*\E$ is always given by $\frac 12 h(x,\nu(x))+ \frac 12 h(x,-\nu(x))$, regardless whether or not $x$ belongs to $\partial^* E_0 = \partial^* \big(\cup_{i=1}^m E_i\big)$. Hence, the problem does not change at all by replacing $h$ with the density $\tilde h(x,\nu)=(h(x,\nu)+h(x,-\nu))/2$. In other words, with this choice one only has to consider the symmetric case, and as said above this would require a considerably simpler proof for our main result.

\subsection{Scheme of the proof\label{sec:exproof}}

The core of the proof is to show that, under the assumptions of Theorem~\mref{main}, minimal clusters have (locally) finitely many triple junctions.
Once this is proved, the argument to obtain either the ${\rm C}^1$ or the ${\rm C}^{1,\gamma}$ regularity of the free boundary is standard and relies on the uniform roundedness in the second variable, and on the $1/2$-Dini continuity of $h$ in the first variable and of $t\mapsto \Ceeb[t]$ 
(if $\eta\beta=1$), see Section~\ref{s:interface}. 

In order to show that multiple junctions of minimal clusters are locally finite and triple, we start by considering the special situation where $\partial^*\E$ consists of more than three radii inside a small ball. We prove that, in this case, the cluster can be modified so to decrease the perimeter of a quantity which is proportional to the radius of the ball. This is the content of Section~\ref{sect90}, where we use the strict convexity and the ${\rm C}^1$-regularity of $h$ in the second variable. We stress that, while in the isotropic case this boils down to a trivial trigonometric estimate, the anisotropic case is quite delicate, especially if $h$ is not symmetric. In fact, Section~\ref{sect90} deals with a density $h(x,\nu)$ only depending on $\nu$. However, several local arguments can be proved first assuming that $h(x,\nu)$ only depends on $\nu$, and then reaching the general case via the continuity of $h$ in the first variable.

The second observation is contained in Lemma~\ref{13/2}, where we show that the (Euclidean) length of the boundary of a minimal cluster inside a small ball is controlled by a constant multiple of its radius. As in the isotropic case, this is a simple consequence of the validity of the $\varepsilon-\varepsilon^\beta$ property and of the $\eta$-growth condition, summarized together in Lemma~\ref{labello}. 

Given a minimal cluster $\mathcal E=\{E_1,\dots,E_m\}$, we will call ``colored'' the chambers $E_i$ for $i=1,\, \dots\,,\, m$ and ``white'' the external chamber $E_0=\R^2\setminus (\bigcup_{i=1}^mE_i)$. The next step is then to prove a first mild regularity property of each colored region, namely, the intersection of $E_i$ with a small ball is an open set whose boundary is a Jordan curve. As a consequence, the reduced boundary of the cluster inside the ball coincides with the topological one, see Lemma~\ref{lem1reg}. We stress that the same regularity holds also for the white region, but this will be proved much later.

To achieve the above regularity property, we first show that there can be no ``islands'' (i.e., isolated portions of colored regions) in small balls. More precisely, if a colored region $E_i$ intersects a small ball, then it must also intersect the boundary of the ball. While in the isotropic and in the symmetric cases this ``no-island property'' follows as a rather simple consequence of the $\eps-\eps^\beta$ property and of the $\eta$-growth conditions (and the argument applies also to the white chamber), in the asymmetric case the proof is much more delicate. The analogous of the no-island property for the white chamber, that we call ``no-lake property'', is again true, but at this stage we only establish a very simple version of it. At this point, we can finally prove the mild regularity of Lemma~\ref{lem1reg}. This requires playing with the notions of quasi-minimality and porosity and using deep properties of the reduced boundary.

We then consider the Jordan curves obtained in Lemma~\ref{lem1reg} and prove that, thanks to the strict convexity of $h$, if two such curves have two points in common, then they must share a common subcurve. The proof of this fact is quite involved, especially in the asymmetric case, and for future purposes we actually need a quantitative version of it, see Lemma~\ref{noncrazyint}. %We are then able to deduce some mild regularity properties of the boundary of the white chamber $E_0$. 

We are now in a position to show that $\partial^*\mathcal E$ intersects the boundary of a ball $B(x,\rho)$ in at most $3$ points for many small radii $\rho$. This is the most complex step of the proof, since at this point we have to rely on all the above information and make an \emph{ad hoc} construction, see Lemma~\ref{atmost3}. Having this result at disposal, we deduce the full version of the no-lake property, see Lemma~\ref{nolakesgen}.

We are finally ready to conclude. As desired, we deduce that all multiple points are triple points and that they are a positive distance apart from each other. This is the content of Lemma~\ref{dist>R6} and easily follows by using Lemma~\ref{atmost3}, the no-island and no-lake properties, and exploiting the Jordan curves found above.

\begin{rmk}
Throughout the argument used to prove that multiple junctions are locally finite triple points, we use that $h$ is positive, locally bounded, ${\rm C}^1$ and strictly convex in the second variable, as well as the validity of the $\varepsilon-\varepsilon^\beta$ property and of the $\eta$-growth conditions used to construct competitors. 
On the other hand, the $1/2$-Dini continuity of $h$ (and of $t\mapsto \Ceeb[t]$) and the uniform roundedness of $h$ are only needed to prove regularity of the free boundary.
\end{rmk}

\section{Proof of the main result\label{sec:proof}}

The proof of the main result, Theorem~\mref{main}, is presented in this section. In turn, this is subdivided in five subsections. While the first one collects some standard definitions and technical tools, in the second one we present the basic geometric estimate from which the fact that junction points are necessarily triple points follows. This estimate, which is trivial in the isotropic case, follows by convexity via a suitable first order expansion in general. The third subsection is devoted to show that there are (locally) finitely many junction points, each of which where exactly three different sets meet, and in the fourth one we obtain the regularity. The actual proof of the theorem, presented in the last subsection, basically only consists in putting the different parts together.

Since we aim to prove Theorem~\mref{main}, from now on we assume that $h$ is continuous and that the local $\eta$-growth condition holds for some $\eta\geq 1$. Moreover, we assume that $\E$ is a minimal cluster, for which the $\eps-\eps^\beta$ property holds, and such that either assumption~(i) or~(ii) of Theorem~\mref{main} holds.

\subsection{Some definitions and technical tools}

Let us fix some notation, that will be used through the rest of the paper. Since we are interested in a local property, in the proof of Theorem~\mref{main} we will immediately start by fixing a big closed ball $\textsf{D}\subseteq\R^2$, and the whole construction will be performed there. Hence, all the following definitions will depend upon $\textsf{D}$, in particular we assume that $|B(x,r)|\leq \Cgr r^\eta$ for every ball $B(x,r)$ with $x\in \textsf{D}$ and $r\leq {\rm diam}(\textsf{D})$.\par

Since $h$ is continuous, we can call $\omega:\R^+\to\R^+$ its modulus of continuity in the first variable inside $\textsf{D}$, that is,
\[
\omega(t) := \sup \Big\{ \big|h(x,\nu)-h(y,\nu)\big|:\, \nu\in\S^1,\, x,\,y \in \textsf{D},\, |y-x|\leq t\Big\}\,.
\]
In particular, if $h$ is locally $\alpha$-H\"older in the first variable, then $\omega(r) \leq C r^\alpha$ for a suitable constant $C$. Moreover, we will call $0<h_{\rm min}\leq h_{\rm max}$ the maximum and the minimum of $h$ in $\textsf{D}\times\S^1$.\par

Observe that, if assumption~(ii) of Theorem~\mref{main} is satisfied, then in particular one can choose the constant $\Ceeb$ to be as small as desired, up to decrease the value of $\bar\eps$. As a consequence, we assume that the constant $\bar\eps$ of the $\eps-\eps^\beta$ property is so small that
\begin{equation}\label{Cperissmall}
\Ceeb < \min \Big\{ \Ceeb^1,\, \Ceeb^2,\, \Ceeb^3,\, \Ceeb^4,\, \Ceeb^5\Big\} \qquad \hbox{if $\eta\beta=1$}\,,
\end{equation}
where all the constants $\Ceeb^i$ depend solely on $h$ and on $\textsf{D}$, and are defined in formulas~(\ref{ceeb1}), (\ref{ceeb2}), (\ref{ceeb3}), (\ref{ceeb4}) and~(\ref{ceeb5}) respectively.

\begin{lem}[Isoperimetric inequality with exponent]\label{disgusting}
For every measurable set $E\subseteq \textsf{D}$ we have
\[
P(E) \geq \frac{h_{\rm min}}{\Cgr^{1/\eta}}\, |E|^{1/\eta}\,.
\]
\end{lem}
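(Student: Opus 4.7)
The plan is to first reduce the claim to a purely Euclidean estimate via the pointwise bound on $h$, and then exploit the decomposition of $E$ into indecomposable components. Since $h\ge h_{\rm min}$ on $D\times\S^1$, one has $P(E)\ge h_{\rm min}\,\Pe(E)$, so the claim reduces to the purely geometric inequality $|E|\le\Cgr\,\Pe(E)^{\eta}$.

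To prove this I would decompose $E$ into its (at most countably many) indecomposable components $\{E_j\}_j$ (in the sense of De~Giorgi / Ambrosio--Caselles--Masnou--Morel), so that $\Pe(E)=\sum_j\Pe(E_j)$. For each $j$ set $d_j={\rm diam}(E_j)\le{\rm diam}(D)$ and pick any $x_j\in E_j$: since $E_j\subseteq B(x_j,d_j)$, the growth condition yields $|E_j|\le\Cgr\,d_j^{\,\eta}$. On the other hand, projecting $E_j$ onto a direction realizing its diameter and applying the coarea formula gives the ``perimeter--diameter'' bound $\Pe(E_j)\ge 2\,d_j$, because the essential projection of an indecomposable set of finite perimeter in $\R^{2}$ is (up to null sets) an interval of length $d_j$ and the generic fiber of $\partial^{*}E_j$ contains at least two points.

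Combining these two ingredients via the elementary inequality $\sum_j a_j^{\eta}\le\bigl(\sum_j a_j\bigr)^{\!\eta}$, valid for $\eta\ge 1$ and $a_j\ge 0$ (since $a_j^{\eta-1}\le\bigl(\sum_i a_i\bigr)^{\eta-1}$ and one sums over $j$), I obtain
\[
|E|=\sum_j|E_j|\le\Cgr\sum_j d_j^{\,\eta}\le\Cgr\Bigl(\sum_j d_j\Bigr)^{\!\eta}\le\Cgr\Bigl(\tfrac12\Pe(E)\Bigr)^{\!\eta}\le\Cgr\,\Pe(E)^{\eta},
\]
which together with $P(E)\ge h_{\rm min}\,\Pe(E)$ gives the stated inequality (in fact with a slightly better constant). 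The only delicate ingredient I foresee is the perimeter--diameter bound $\Pe(E_j)\ge 2\,d_j$ for indecomposable sets of finite perimeter: it is immediate for smoothly bounded connected sets via projection onto the diameter direction, and in the general measure-theoretic setting it requires invoking the decomposition theory of indecomposable components in $\R^{2}$; everything else is a routine combination of the growth condition and a convexity-type summation estimate.
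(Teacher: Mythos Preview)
Your argument is correct and reaches the same conclusion, but the route differs from the paper's. Both proofs first strip off the density via $P(E)\ge h_{\rm min}\,\H^1(\partial^*E)$ and then exploit the growth condition on balls together with the superadditivity $\sum a_j^\eta\le(\sum a_j)^\eta$ for $\eta\ge1$. The difference lies in how the balls are produced. The paper quotes Gustin's boxing inequality: any (polygonal, hence by approximation any finite-perimeter) planar set can be covered by balls $B(x_i,r_i)$ with $\sum_i r_i\le \H^1(\partial^*E)/(2\sqrt2)$; applying the growth bound to each $B(x_i,r_i)$ and summing gives the estimate directly. You instead decompose $E$ into indecomposable components $E_j$, cover each by a single ball of radius ${\rm diam}(E_j)$, and use the perimeter--diameter bound $\Pe(E_j)\ge 2\,{\rm diam}(E_j)$.

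What each buys: Gustin's inequality is a one-line citation and avoids any discussion of indecomposability or of why the projection of an indecomposable set onto its diameter direction is an interval (the point you flag as delicate, which indeed needs the ACMM structure theory or an approximation by smooth connected sets). Your approach, once that bound is granted, is arguably more elementary in spirit and yields an explicit spare factor $2$ (the paper gets $2\sqrt2$); it also makes clearer that the argument is really about controlling each ``piece'' of $E$ by a single ball. Either way the stated inequality holds with room to spare.
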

\begin{proof}
By standard approximation, it is enough to prove the inequality for a smooth set $E\subseteq \textsf{D}$, and we can call $E_i$ its connected components. For every $i$, we take an arbitrary point $x_i\in E_i$ and call $r_i$ the diameter of $E_i$. We have then
\begin{align*}
E\subseteq \bigcup_i B(x_i,r_i)\,, && \haus^1(\partial E)=\sum_i \haus^1(\partial E_i) \geq \sum_i r_i\,.
\end{align*}
By construction, for every $i$ we have that $x_i\in \textsf{D}$ and $r_i\leq {\rm diam}(\textsf{D})$. Then, keeping in mind that $\eta\geq 1$, for any $E\subseteq \textsf{D}$ we deduce
\[\begin{split}
P(E)&\geq h_{\rm min} \haus^1(\partial E)
\geq  h_{\rm min} \sum\nolimits_i r_i
\geq \frac{h_{\rm min}}{\Cgr^{1/\eta}}\, \sum\nolimits_i |B(x_i,r_i)|^{1/\eta}\\
&\geq \frac{h_{\rm min}}{\Cgr^{1/\eta}}\, \Big(\sum\nolimits_i |B(x_i,r_i)|\Big)^{1/\eta}
\geq \frac{h_{\rm min}}{\Cgr^{1/\eta}}\, |E|^{1/\eta}\,,
\end{split}\]
so the proof is concluded.
\end{proof}

We introduce now the (standard) notation of relative perimeter. Given a set $E\subseteq\R^2$ of locally finite perimeter, or a cluster $\E$, and given a Borel set $A\subseteq \R^2$, the \emph{relative perimeter of $E$ (or $\E$) inside $A$} is the measure of the boundary of $E$ (or $\E$) within $A$, i.e.,
\begin{align*}
P(E;A) = \int_{A\cap\partial^* E} h(x,\nu_E(x))\, d\haus^1(x)\,, &&
P(\E;A) = \frac{P(\cup_{i=1}^m E_i;A) + \sum_{i=1}^m P(E_i;A) }2 \,,
\end{align*}
compare with~(\ref{PbE}).\par

We conclude this short section by presenting (a very specific case of) a fundamental result due to Vol'pert, see~\cite{Volpert} and also~\cite[Theorem~3.108]{AFP}).
\begin{thm}[Vol'pert]\label{volpert}
Let $E\subseteq\R^2$ be a set of locally finite perimeter, and let $x\in\R^2$ be fixed. Then, for a.e.\ $r>0$, one has that
\[
\partial^* E \cap \partial B(x,r) = \partial^* \big( E \cap \partial B(x,r)\big)\,.
\]
\end{thm}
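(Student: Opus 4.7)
The plan is to derive the claim as a direct specialization of the general Vol'pert slicing theorem (\cite[Theorem~3.108]{AFP}) applied to the foliation of $\R^2\setminus\{x\}$ by concentric circles. The key observation is that $\rho(y):=|y-x|$ is a $1$-Lipschitz function with $|\nabla\rho|=1$ almost everywhere, whose level sets are exactly the circles $\partial B(x,r)$, so the BV coarea formula applied to $\chi_E$ along this foliation already carries most of the weight.

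First, I would apply the coarea formula to $\chi_E$ and $\rho$: this gives that for $\L^1$-a.e.\ $r>0$ the slice $E_r:=E\cap \partial B(x,r)$ is a set of finite perimeter when viewed intrinsically inside the smooth $1$-manifold $\partial B(x,r)$. Such a slice, modulo $\H^1$-null subsets of the circle, is therefore a finite disjoint union of relatively open arcs, whose intrinsic reduced boundary $\partial^* E_r$ is simply the finite set of endpoints of those arcs. Second, I would invoke the transversality part of the slicing theorem: for $\L^1$-a.e.\ $r$, the intrinsic reduced boundary $\partial^* E_r$ in $\partial B(x,r)$ agrees up to $\H^0$-null sets with the transverse intersection $\partial^* E\cap \partial B(x,r)$. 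Since on a one-dimensional manifold an $\H^0$-null set is empty, this upgrades to the pointwise equality in the statement.

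Third, I would verify that the exceptional set of radii is truly $\L^1$-negligible. Two things could go wrong on a given circle: either $\H^1(\partial^* E\cap \partial B(x,r))>0$, which by local finiteness of $\H^1\res\partial^* E$ together with disjointness of the circles can happen on at most a countable set of $r$, or the transversality of $\partial^* E$ with the foliation fails in a non-generic way, which is ruled out by the coarea formula for rectifiable sets.

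The only genuinely delicate point is the second step, namely the identification of the intrinsic reduced boundary on $\partial B(x,r)$ with the trace on that circle of the $\R^2$-reduced boundary $\partial^*E$. This identification is exactly the content of Vol'pert's original slicing result \cite{Volpert}, reformulated in the BV framework in \cite[Theorem~3.108]{AFP}; I would simply cite it and conclude, rather than reconstruct the measure-theoretic bookkeeping.
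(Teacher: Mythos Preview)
Your proposal is correct and matches the paper's approach exactly: the paper does not prove this statement at all but simply cites it as a known result due to Vol'pert, referring to \cite{Volpert} and \cite[Theorem~3.108]{AFP}. Your write-up is in fact more detailed than the paper's treatment, since you sketch how the planar circle-slicing case follows from the general BV slicing theorem, whereas the paper merely quotes the statement and moves on.
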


Notice that, for almost every $r>0$, both sets in the above equality are done by finitely many points. In particular, $E\cap\partial B(x,r)$ is a subset of the circle $\partial B(x,r)$, and its boundary has to be considered in the $1$-dimensional sense. More precisely, for almost every $r>0$ the set $E\cap \partial B(x,r)$ essentially consists of a finite union of arcs of the circle, and the intersection of $\partial^* E$ with the circle is simply the union of the endpoints of all of them. Through the rest of the paper, we will often consider intersections of sets with balls. Even if this will not be repeated every time, we will always consider balls for which Vol'pert Theorem holds true.

\subsection{The $90^\circ$ property\label{sect90}}

This section is devoted to present a geometric estimate, which is the main reason why junction points are triple points. Let us consider for a moment the Euclidean perimeter, let $A,\,O,\,B$ be three points in $\R^2$, and let us assume that $\angle AOB$ is the greatest angle of the triangle $AOB$. A simple trigonometric computation ensures that the shortest connected set containing the three points is the union of the segments $BO$ and $OA$ if $\angle AOB$ is larger than $120^\circ$, while otherwise it is the union of the three segments $AP,\, BP$ and $OP$, being $P$ the unique point of the triangle $AOB$ such that the angles $\angle OPB,\, \angle BPA$ and $\angle APO$ are all $120^\circ$, see Figure~\ref{Fig120}. We can call this the ``$120^\circ$ property''. As a simple consequence, once one knows that the boundary of a minimal cluster is done by ${\rm C}^1$ arcs, it follows that all the junction points must be points where the different arcs meet with angles of $120^\circ$, in particular they must be triple points (i.e., three arcs meet).\par
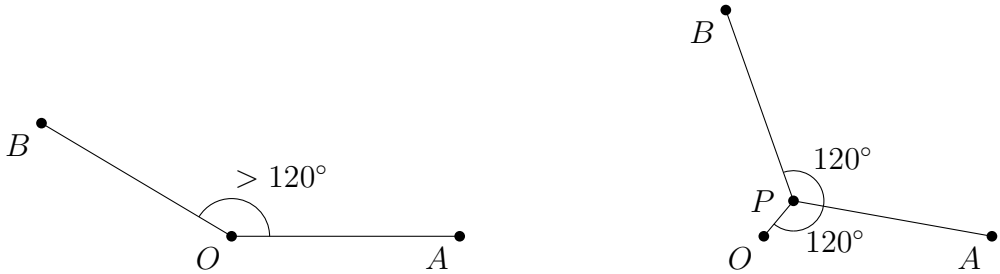
\begin{figure}[htbp]
\begin{tikzpicture}
\fill (1,0) circle (2pt);
\draw (1,0) node[anchor=north east] {$O$};
\fill (4,0) circle (2pt);
\draw (4,0) node[anchor=north east] {$A$};
\fill (-1.5,1.5) circle (2pt);
\draw (-1.5,1.5) node[anchor=north east] {$B$};
\draw (-1.5,1.5) -- (1,0) -- (4,0);
\draw (1.5,0) arc (0:149:.5);
\draw (0.9,0.5) node[anchor=south west] {$>120^\circ$};
\fill (7,0) circle (2pt);
\draw (7,0) node[anchor=north east] {$O$};
\fill (10,0) circle (2pt);
\draw (10,0) node[anchor=north east] {$A$};
\fill (6.5,3) circle (2pt);
\draw (6.5,3) node[anchor=north east] {$B$};
\fill (7.39,.47) circle (2pt);
\draw (7.29,.47) node[anchor=east] {$P$};
\draw (7.39,.47) -- (7,0);
\draw (7.39,.47) -- (10,0);
\draw (7.39,.47) -- (6.5,3);
\draw (7.39,.07) arc (-90:108:.4);
\draw (7.39,.07) arc (-90:-130:.4);
\draw (7.5,0.75) node[anchor=south west] {$120^\circ$};
\draw (7.4,-0.35) node[anchor=south west] {$120^\circ$};
\end{tikzpicture}
\caption{The ``$120^\circ$ property''. The shortest connected set containing three points $A,\, O,\,B$ in two cases.}\label{Fig120}
\end{figure}
Let us now pass to consider a general (strictly convex and ${\rm C}^1$) density for the perimeter, only depending on the direction, so that the unit ball $\C=\C(x)$ defined in~(\ref{unitball}) is the same for every $x\in\R^2$. As soon as $\C$ is not a Euclidean ball, the $120^\circ$ property easily fails. Nevertheless, in order to understand whether or not the shortest connected set containing the three points $A,\, O$ and $B$ is the union of the segments $BO$ and $OA$, there is still an interesting angle. Namely, the angle between the direction of $OA$ and the tangent direction to $\partial\C$ at the point in direction of $OB$ (the direction $\nu$ in Figure~\ref{Figstep3}). It can be shown that this angle must be at least $90^\circ$, see Step~III in the proof of Proposition~\ref{90prop}. Roughly speaking, this is enough to rule out quadruple points, because they should correspond to four angles of exactly $90^\circ$, and in turn this is impossible by the \emph{strict} convexity of the norm. The situation is not really so simple, but this is somehow the underlying idea.\par

Before giving the claim of the property, it is convenient to recall that we are considering perimeter densities which are not necessarily symmetric (the result below in the case of a symmetric density is much simpler to prove). As discussed in Section~\ref{sec:defper}, a consequence of this is that we cannot speak of length of segments, but of length of \emph{oriented} segments, since in general $h(\nu)\neq h(-\nu)$. A simple example can help to further clarify this point. Consider the cluster depicted in Figure~\ref{FigPerAs}, and let us compute its perimeter inside the ball. This perimeter is given by the sum of the lengths of the six radii from $OA$ to $OF$, however these lengths have to be computed carefully if $h$ is not symmetric. The segment $OA$, for instance, belongs to the boundary of $E_0$ and $E_1$, so its contribution to the perimeter is $h(\widehat{OA})$, where $\widehat{OA}$ is the \emph{oriented} segment obtained by rotating the \emph{oriented} segment $OA$ clockwise of $90^\circ$. Just for simplicity of notations, as we will do in the sequel, we write $\h(OA)$ in place of $h(\widehat{OA})$. The segment $OB$, instead, belongs to the boundary of $E_1$ and $E_3$, thus its contribution is $(\h(OB)+\h(BO))/2$. Arguing analogously for the other segments, we readily conclude that the perimeter of the cluster inside the ball is given by
\[
\h(OA) + \frac{\h(OB)+\h(BO)}2 + \frac{\h(OC)+\h(CO)}2 + \h(DO) + \h(OE) + \h(FO)\,.
\]
We are now in a position to state and prove the ``$90^\circ$ property''.
\begin{figure}[htbp]
\begin{tikzpicture}[>=>>>>]
\filldraw[fill=red!15!white, draw=black, line width=1pt] (0,0) -- (2.6,1.5) arc(30:135:3) -- cycle;
\filldraw[fill=green!15!white, draw=black, line width=1pt] (0,0) -- (-2.12,2.12) arc(135:190:3) -- cycle;
\filldraw[fill=blue!15!white, draw=black, line width=1pt] (0,0) -- (-2.95,-.52) arc(190:250:3) -- cycle;
\filldraw[fill=white, draw=black, line width=1pt] (0,0) -- (-1.03,-2.82) arc(250:290:3) -- cycle;
\filldraw[fill=yellow!15!white, draw=black, line width=1pt] (0,0) -- (1.03,-2.82) arc(290:330:3) -- cycle;
\filldraw[fill=white, draw=black, line width=1pt] (0,0) -- (2.6,-1.5) arc(-30:30:3) -- cycle;
\fill (2.6,1.5) circle (2pt);
\draw (2.6,1.5) node[anchor=south west] {$A$};
\fill (-2.12,2.12) circle (2pt);
\draw (-2.12,2.12) node[anchor=south east] {$B$};
\fill (-2.95,-0.52) circle (2pt);
\draw (-2.95,-0.52) node[anchor=north east] {$C$};
\fill (-1.03,-2.82) circle (2pt);
\draw (-1.03,-2.82) node[anchor=north east] {$D$};
\fill (1.03,-2.82) circle (2pt);
\draw (1.03,-2.82) node[anchor=north west] {$E$};
\fill (2.6,-1.5) circle (2pt);
\draw (2.6,-1.5) node[anchor=north west] {$F$};
\fill (0,0) circle (2pt);
\draw (0.15,0.1) node[anchor=south] {$O$};
\draw[->] (0.866,.5) -- (1.733,1);
\draw[<->] (-0.7,0.7) -- (-1.41,1.41);
\draw[<->] (-0.99,-.17) -- (-1.97,-.35);
\draw[<-] (-0.34,-0.94) -- (-.69,-1.88);
\draw[->] (0.34,-0.94) -- (.69,-1.88);
\draw[<-] (0.866,-0.5) -- (1.733,-1);
\draw (0.2,1.8) node {$E_1$};
\draw (-1.76,.5) node {$E_3$};
\draw (-1.42,-1.2) node {$E_5$};
\draw (0,-1.8) node {$E_0$};
\draw (1.2,-1.4) node {$E_2$};
\draw (1.75,-.2) node {$E_0$};
\end{tikzpicture}
\caption{The intersection of a simple cluster with a ball and its perimeter; notice the directions of the arrows.}\label{FigPerAs}
\end{figure}
\begin{prop}[The $90^\circ$ property]\label{90prop}
Let $\overline h:\R^2\to\R^+$ be a ${\rm C}^1$, positively $1$-homogeneous function, strictly positive except at $0$ and with strictly convex unit ball, and let us denote by $\overline P$ the perimeter obtained by substituting $h(x,\nu)$ with $\overline h(\nu)$ in~(\ref{weightedvolper}). There exists $\delta>0$ such that the following is true. Let $\E'\subseteq\R^2$ be a cluster whose boundary, inside the unit ball $B(0,1)$, is done by a finite number of radii of the ball. If these radii are more than three, then there exists another cluster $\F\subseteq\R^2$, coinciding with $\E'$ outside the ball $B(0,1)$, such that
\begin{equation}\label{thesis90}
\overline P(\F) \leq \overline P(\E') - \delta\,.
\end{equation}
\end{prop}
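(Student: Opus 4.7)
My approach is a first-order variational analysis of the junction at $O$. Label the radii by their unit directions $\nu_1,\dots,\nu_n$ in cyclic order, and for each $i$ record whether the two flanking sectors are both colored or one is the white chamber $E_0$. Writing $\h$ for the $1$-homogeneous extension of $\overline h$ along tangent directions, so that $\h(v)$ is the weighted length of an oriented segment with direction vector $v$, the contribution of the radii to $\overline P(\E')$ is a sum of terms $\h(\nu_i)$, $\h(-\nu_i)$ or $\tfrac12(\h(\nu_i)+\h(-\nu_i))$ according to the labels, exactly as in Figure~\ref{FigPerAs}.

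The first, simplest competitor is the \emph{translation}: replace $O$ by $P=O+\varepsilon w$ while keeping the endpoints $A_i\in\partial B(0,1)$ fixed. Since $\overline h\in C^1$, each radius term expands as $\h(\pm(\nu_i-\varepsilon w))=\h(\pm\nu_i)\mp\varepsilon\nabla\h(\pm\nu_i)\cdot w+O(\varepsilon^2)$, and summing gives a first-order change of the form $-\varepsilon\,w\cdot V + O(\varepsilon^2)$ with $V=\sum_{i=1}^n V_i$, each $V_i$ being a signed combination of $\nabla\h(\pm\nu_i)$ read off the labels. If $V\neq 0$, the choice $w=V/|V|$ with $\varepsilon$ of order one produces $\overline P(\F)\leq\overline P(\E')-\delta$ directly, with $\delta$ comparable to $|V|$.

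The critical case $V=0$ can occur even for $n\geq 4$ (e.g., four Euclidean radii at the cardinal directions), so it has to be handled by a second competitor, a \emph{splitting}: break $O$ into two nearby points $P_1$ and $P_2=P_1+\varepsilon v$, assign a consecutive block of $k$ radii to $P_1$ and the remaining $n-k$ to $P_2$, and connect $P_1$ to $P_2$ by a short edge (which becomes a new boundary between the two split-adjacent sectors). A computation analogous to the translation case yields a first-order change of the form $\varepsilon(s_2-s_1)[\ell-u\cdot V^{(1)}]$, where $V^{(1)}, V^{(2)}$ are the partial sums of the $V_i$ over the two groups (so that $V^{(1)}+V^{(2)}=V=0$) and $\ell$ is the weighted-length coefficient of the new edge in direction $u$. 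The geometric content of the $90^\circ$ property, to be established in Step III, is precisely that this change can be made strictly negative for some choice of grouping and of $u$ when $n\geq 4$: at a critical configuration the vanishing of $V$ forces, for each pair of consecutive radii, the angle between the direction of one radius and the tangent line to the Wulff shape $\C$ at the point dual to the next to be at least $90^\circ$; with $n\geq 4$ the sum of these $n$ angles is bounded by $360^\circ$, forcing each to be exactly $90^\circ$, which the \emph{strict} convexity of $\C$ rules out.

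Uniformity of $\delta$ would follow by a compactness argument over the set of admissible data $(n;\nu_1,\dots,\nu_n;\text{labels})$ with $n\geq 4$, since the maximal perimeter reduction achievable by either type of competitor is a lower-semicontinuous function that is strictly positive everywhere by the previous two paragraphs, hence has a positive infimum. I expect the main obstacle to be the rigorous execution of the $90^\circ$ step, Step III, in the asymmetric setting: the contributions of $\nabla\h(\nu_i)$ and $\nabla\h(-\nu_i)$ enter $V_i$ differently depending on whether a radius is flanked by two colored sectors or borders $E_0$, so keeping the signs and orientations straight while quantitatively exploiting the strict convexity of $\C$ requires a careful case analysis, and a further subtlety is to ensure that, for every critical configuration with $n\geq 4$, at least one grouping makes the bracket $\ell - u\cdot V^{(1)}$ strictly negative for a suitable $u$.
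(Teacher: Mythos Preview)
Your plan shares the paper's first-order variational philosophy, but the crucial Step~III is where the argument breaks down as written. The equation $V=0$ is a \emph{single} vector identity (two scalar equations); it does not by itself impose $n$ separate angle conditions ``one per pair of consecutive radii''. What actually produces those local conditions in the paper is a different move from either your translation or your splitting: one slides the base of a \emph{single} radius $OB$ a distance $\varepsilon$ along an \emph{adjacent} radius $OC$. When both flanking sectors are colored, the short segment $OO_\varepsilon\subset OC$ keeps the same cost before and after (it still separates two colored regions), so the only first-order change comes from $OB$ alone, yielding the clean sign condition $OC\cdot\nu\le 0$ with $\nu$ the outer normal to $\widetilde\C$ at $\widehat B$. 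Doing this for both neighbours of $B$ and using $\angle AOC<\pi$ gives the contradiction. Your global translation mixes all $V_i$ together and loses exactly this locality; your splitting always creates a \emph{new} edge of positive cost $\ell(u)$, so even with $V=0$ you still owe a proof that some grouping and direction make $\ell(u)-u\cdot V^{(1)}<0$, and you have not supplied one.

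The white/colored asymmetry is the second place where the proposal is thin. In the paper, a radius bordering $E_0$ changes its cost type (from $\h$ to $\tilde\h$, or vice versa) on the short segment $OO_\varepsilon$ when you slide along it, and handling this is the entire content of Step~IV; the case of two white slices (Step~II) is disposed of by yet another competitor (a chord joining them), not by any first-order move. Saying this ``requires a careful case analysis'' without indicating how the extra $\tfrac12(\h(OB)-\h(BO))$ terms are controlled is not enough: this is precisely where the paper has to invoke strict convexity a second time (via a genuine triangle inequality with positive defect) to close the argument. Your compactness step for the uniformity of $\delta$ is fine once the rest is in place, but as it stands the heart of the proof---why a configuration with $n\ge4$ radii cannot be first-order stationary for all the relevant local moves---is asserted rather than proved.
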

\begin{proof}
We will call for simplicity ``slice'' each of the sectors of the ball $B(0,1)$ having two consecutive radii of $\partial\E'$ in the boundary. We will say that a slice is ``white'' if it is contained in $E_0$, otherwise we will say that it is ``colored''. As already done before, for every $\nu\in\S^1$ we call $\h(\nu)=\overline h(\hat\nu)$, being $\hat\nu$ the angle obtained rotating $\nu$ of $90^\circ$ clockwise, and we call $\tilde\h$ the ``symmetrized version'' of $\h$, that is $\tilde \h(\nu)=(\h(\nu)+\h(-\nu))/2$. We let $K>0$ be a number such that
\begin{equation}\label{largeK}
\frac 1K \leq \overline h(\nu)\leq K \qquad \forall\, \nu\in\S^1\,.
\end{equation}
We assume then that there are at least four slices, and we look for a cluster $\F$ satisfying~(\ref{thesis90}). The proof is divided for clarity in a few steps.
\step{I}{The minimal angle $\theta_{\rm min}$.}
First of all, we shall observe that the thesis is true if one of the angles is too small, that is, there exist $\theta_{\rm min}>0$ and $\delta_1>0$ such that a cluster $\F$ satisfying~(\ref{thesis90}) with $\delta_1$ in place of $\delta$ can be found if one of the angles between the radii is less than $\theta_{\rm min}$. Indeed, let $P,\,Q$ be two consecutive points of $\partial^*\E'\cap\partial B(0,1)$, making with the origin a small angle $\theta$, being $Q$ slightly after $P$ in the counterclockwise sense. Let us consider the three slices around the two radii $OP$ and $OQ$. There are five possibilities: either the three slices are all colored; or only the external slice having $OP$ in the boundary is white; or the internal slice is white; or only the external slice having $OQ$ in the boundary is white; or both the external ones are white. In the first three cases, we let $\F$ be the unique cluster such that 
\begin{align*}
\F=\E' \ \hbox{in } \R^2\setminus B(0,1)\,, &&
\partial\F=\big(\partial\E' \setminus OQ\big) \cup PQ\,.
\end{align*}
Instead, in the fourth case we let $\F$ be the cluster so that
\begin{align*}
\F=\E' \ \hbox{in } \R^2\setminus B(0,1)\,, &&
\partial\F=\big(\partial\E' \setminus OP\big) \cup PQ\,,
\end{align*}
and in the last case $\F$ is the cluster such that
\begin{align*}
\F=\E' \ \hbox{in } \R^2\setminus B(0,1)\,, &&
\partial\F=\big(\partial\E' \setminus (OQ\cup OP)\big) \cup PQ\,.
\end{align*}
Keeping in mind the definition of perimeter and of $\h$ and $\tilde\h$, as well as~(\ref{largeK}), as soon as $\theta$ is small enough, only depending on $K$, in the first two cases we have
\[
\overline P(\F) - \overline P(\E') = \tilde\h(PQ) - \tilde\h(OQ) \leq 2K\sin(\theta/2) -\frac 1K\leq - \frac 1{2K}\,,
\]
and in a similar way also in the fourth and in the fifth case we have $\overline P(\F) - \overline P(\E') \leq - \frac 1{2K}$. Instead, in the third case we have
\[\begin{split}
\overline P(\F)-\overline P(&\E') \leq \h(PQ)-\h(OQ)+\tilde \h (PO)-\h(PO)\\
&=\h(PQ) -\h(OQ) + \frac{\h(OP)}2-\frac{\h(PO)}2 \\
&\leq 2K\sin(\theta/2) -\h(OQ) + \frac{\h(OP)}2-\frac 1{2K}
\leq 2K\sin(\theta/2) -\frac 1{2K} 
\leq -\frac 1{3K}\,,
\end{split}\]
where the first inequality is strict if and only if the two external slices have ``the same colors'', and the second last inequality, namely $\h(OP)\leq 2\h(OQ)$, is true by continuity of $\overline h$ as soon as $\theta$ is small enough. Summarizing, the existence of $\theta_{\rm min}$ and $\delta_1$ as claimed follows, and this step is concluded.\par

In the next steps we will show that, for any cluster $\E'$ as in the claim, with at least four slices, and with all the radii making angles larger than $\theta_{\rm min}$, there exists some cluster $\F$, coinciding with $\E'$ outside of the unit ball, such that $\overline P(\F)<\overline P(\E')$. In particular, since the points of $\partial^*\E'$ in $\partial B(0,1)$ are at most $2\pi/\theta_{\rm min}$, by continuity of $\overline h$ and compactness of $\S^1$ there must be a constant $\delta_2>0$, only depending on $\overline h$, such that $\overline P(\F) \leq \overline P(\E')-\delta_2$. Together with Step~I, this will then clearly conclude the proof, with $\delta=\min\{\delta_1,\,\delta_2\}$.

\step{II}{Proof with more than a white slice.}
We now show the thesis if there are at least two white slices. In fact, in this case, we can take two white slices, corresponding to two arcs with endpoints $P,\,Q$, and $R,\,S$ respectively, in such a way that $\angle SOP<\pi$, and the angle $\angle SOP$ corresponds to a sector of circle which does not intersect $E_0$ (see Figure~\ref{Figstraightcut}).
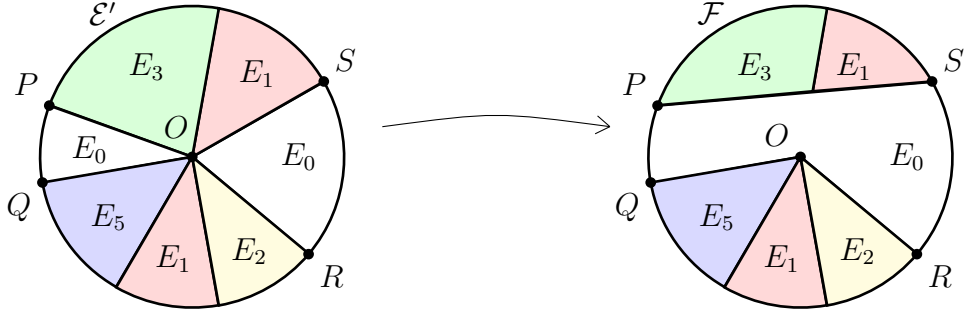
\begin{figure}[htbp]
\begin{tikzpicture}[>=>>>>]
\draw[->] (2.5,.4) .. controls (4,.6) .. (5.5,.4);
\filldraw[fill=white, draw=black, line width=1pt] (0,0) -- (1.53,-1.29) arc(-40:30:2) -- cycle;
\filldraw[fill=red!15!white, draw=black, line width=1pt] (0,0) -- (1.73,1) arc(30:80:2) -- cycle;
\filldraw[fill=green!15!white, draw=black, line width=1pt] (0,0) -- (.35,1.97) arc(80:160:2) -- cycle;
\filldraw[fill=white, draw=black, line width=1pt] (0,0) -- (-1.88,.68) arc(160:190:2) -- cycle;
\filldraw[fill=blue!15!white, draw=black, line width=1pt] (0,0) -- (-1.97,-.34) arc(190:240:2) -- cycle;
\filldraw[fill=red!15!white, draw=black, line width=1pt] (0,0) -- (-1,-1.73) arc(240:280:2) -- cycle;
\filldraw[fill=yellow!15!white, draw=black, line width=1pt] (0,0) -- (0.35,-1.97) arc(280:320:2) -- cycle;
\draw (1.4,0) node {$E_0$};
\draw (0.85,1.1) node {$E_1$};
\draw (-.6,1.2) node {$E_3$};
\draw (-1.35,0.1) node {$E_0$};
\draw (-1.1,-.8) node {$E_5$};
\draw (-0.25,-1.35) node {$E_1$};
\draw (.75,-1.25) node {$E_2$};
\fill (0,0) circle (2pt);
\draw (0.1,0.1) node[anchor=south east] {$O$};
\fill (-1.88,.68) circle (2pt);
\draw (-1.88,.68) node[anchor=south east] {$P$};
\fill (-1.97,-.34) circle (2pt);
\draw (-1.97,-.34) node[anchor=north east] {$Q$};
\fill (1.53,-1.29) circle (2pt);
\draw (1.53,-1.29) node[anchor=north west] {$R$};
\fill (1.73,1) circle (2pt);
\draw (1.73,1) node[anchor=south west] {$S$};
\draw (-1.5,1.9) node[anchor=west] {$\E'$};
\filldraw[fill=white, draw=black, line width=1pt] (8,0) -- (9.53,-1.29) arc(-40:30:2) -- (6.12,.68) arc(160:190:2) -- cycle;
\filldraw[fill=red!15!white, draw=black, line width=1pt] (8.16,0.87) -- (9.73,1) arc(30:80:2) -- cycle;
\filldraw[fill=green!15!white, draw=black, line width=1pt] (8.16,0.87) -- (8.35,1.97) arc(80:160:2) -- cycle;
\filldraw[fill=blue!15!white, draw=black, line width=1pt] (8,0) -- (6.03,-.34) arc(190:240:2) -- cycle;
\filldraw[fill=red!15!white, draw=black, line width=1pt] (8,0) -- (7,-1.73) arc(240:280:2) -- cycle;
\filldraw[fill=yellow!15!white, draw=black, line width=1pt] (8,0) -- (8.35,-1.97) arc(280:320:2) -- cycle;
\draw (9.4,0) node {$E_0$};
\draw (8.7,1.2) node {$E_1$};
\draw (7.4,1.2) node {$E_3$};
\draw (6.9,-.8) node {$E_5$};
\draw (7.75,-1.35) node {$E_1$};
\draw (8.75,-1.25) node {$E_2$};
\fill (8,0) circle (2pt);
\draw (8,0) node[anchor=south east] {$O$};
\fill (6.12,.68) circle (2pt);
\draw (6.12,.68) node[anchor=south east] {$P$};
\fill (6.03,-.34) circle (2pt);
\draw (6.03,-.34) node[anchor=north east] {$Q$};
\fill (9.53,-1.29) circle (2pt);
\draw (9.53,-1.29) node[anchor=north west] {$R$};
\fill (9.73,1) circle (2pt);
\draw (9.73,1) node[anchor=south west] {$S$};
\draw (6.5,1.9) node[anchor=west] {$\F$};
\end{tikzpicture}
\caption{The situation in Step~II.}\label{Figstraightcut}
\end{figure}
We define then $\F$ by ``joining'' the two white slices as in the Figure. Notice that $\partial\F$ is obtained by $\partial\E'$ removing the radii $PO$ and $OS$ and adding the chord $PS$, and by correspondingly shortening the radii contained in the sector. As a consequence,
\begin{equation}\label{step21a}
\overline P(\F)\leq \overline P(\E') - \h(PO) - \h (OS) + \h(PS)\,,
\end{equation}
where the inequality is strict if and only if the sector contains more than a single slice, as in the example depicted in the Figure. Since $\overline h$ is strictly convex in the sense of Definition~\ref{strictconv}, and $PO$ and $OS$ are not parallel because $\angle SOP<\pi$, we have
\[
\h(PS)= 2 \h\bigg(\frac{PO+OS}2\bigg)<\h(PO)+\h(OS)\,,
\]
thus by~(\ref{step21a}) we obtain $\overline P(\F)<\overline P(\E')$ and this step is concluded.

\step{III}{Proof with an angle $\angle AOC<\pi$ containing a single radius, between two colored slices.}
The next step consists in proving the thesis if there are three consecutive radii, $AO,\,BO$ and $CO$, in such a way that both the slices between them are colored, and that $\angle AOC<\pi$. 
\begin{figure}[htbp]
\begin{tikzpicture}[>=>>>>]
\filldraw[fill=red!15!white, draw=black, line width=1pt] (-7,0) -- (-5.47,-1.29) arc(-40:20:2) -- cycle;
\filldraw[fill=yellow!15!white, draw=black, line width=1pt] (-7,0) -- (-5.12,.68) arc(20:70:2) -- cycle;
\filldraw[fill=white, draw=black, line width=1pt] (-7,0) -- (-6.32,1.88) arc(70:320:2) -- cycle;
\draw (-5.65,-0.2) node {$E_1$};
\draw (-6.1,1) node {$E_2$};
\fill (-7,0) circle (2pt);
\draw (-7,0) node[anchor=north east] {$O$};
\fill (-5.47,-1.29) circle (2pt);
\draw (-5.47,-1.29) node[anchor=north west] {$A$};
\fill (-5.12,.68) circle (2pt);
\draw (-5.12,.68) node[anchor=south west] {$B$};
\fill (-6.32,1.88) circle (2pt);
\draw (-6.32,1.88) node[anchor=south west] {$C$};
\fill (-6.83,.47) circle (2pt);
\draw (-6.83,.47) node[anchor=south east] {$O_\eps$};
\draw[dashed] (-6.83,.47) -- (-5.12,.68);
\draw[line width=1pt, rotate=10] (0,0) ellipse (3 and 1.6);
\draw (-1.5,1.7) node[anchor=west] {$\widetilde\C$};
\fill (0,0) circle (2pt);
\draw (0,0) node[anchor=south east] {$O$};
\draw (0,0) -- (2.72,.98);
\fill (2.72,.98) circle (2pt);
\draw[->] (2.72,.98) -- (3.43,1.68);
\draw (3.15,1.2) node[anchor=south east] {$\nu$};
\draw (2.65,.9) node[anchor=north] {$\widehat B$};
\end{tikzpicture}
\caption{The situation in Step~III.}\label{Figstep3}
\end{figure}
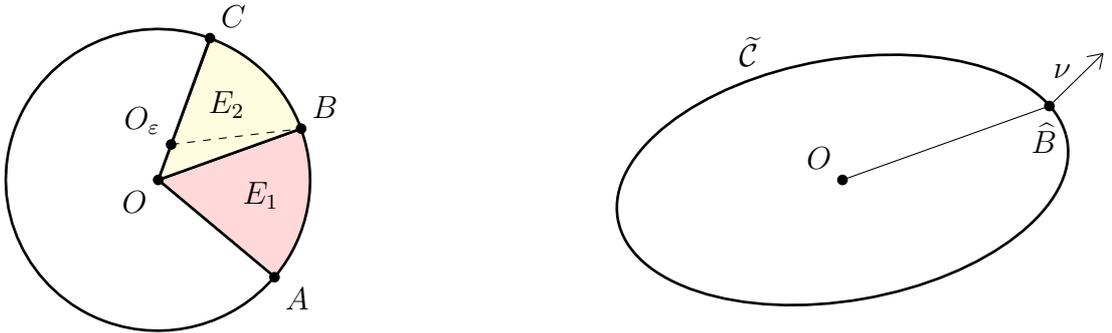
The situation is depicted in Figure~\ref{Figstep3}, left, where the two slices are denoted by $E_1$ and $E_2$ just to fix the ideas. Without loss of generality we may assume that, as in the figure, the points are ordered from $A$ to $C$ in the counterclockwise sense. Outside of the sector $AOC$ there might be further radii, not depicted in the figure. Let us denote by $\widetilde\C$ the unit ball corresponding to $\tilde\h$. Calling $\nu$, as in Figure~\ref{Figstep3}, right, the outer normal to $\widetilde\C$ at $\widehat B=B/\tilde\h(OB)$, for any direction $\eta\in\S^1$ one has
\begin{equation}\label{1order}
\tilde \h(O\widehat B+\eps\eta)
=\tilde \h\big(O\widehat B + \eps (\eta\cdot\nu) \nu\big) + o(\eps)
=1+\eps \tilde\h(OB)\, \frac{\eta\cdot\nu}{OB\cdot \nu}+ o(\eps)\,.
\end{equation}
Set then, as in the figure, $O_\eps=\eps C$ for some small $\eps>0$, and consider the cluster $\F$ obtained from $\E'$ by substituting the radius $OB$ with the segment $O_\eps B$. Notice that the difference $\overline P(\F)-\overline P(\E')$ is only determined by the different contribution of $OB$ and $O_\eps B$; indeed, the segment $OO_\eps$ contributes to $\overline P(\F)$ exactly as to $\overline P(\E')$, because on one side of the segment nothing happens, while on the other side the set $E_2$ is replaced by $E_1$, and this does not make any difference since both are colored. Therefore,
\[
\overline P(\F)- \overline P(\E') = \tilde\h(O_\eps B) - \tilde\h(OB) = \tilde\h(OB -\eps OC) - \tilde\h(OB)\,.
\]
Keeping in mind the first order expansion~(\ref{1order}), and observing that $OB\cdot \nu>0$ by convexity of $\widetilde \C$, we derive that $\overline P(\F)<\overline P(\E')$ for $0<\eps\ll 1$ if $OC\cdot \nu>0$. The step is then concluded in this case. Since we can perform the same argument with the segment $OA$ in place of $OC$, the step is proved unless
\begin{align}\label{sfaig}
OA \cdot \nu\leq 0\,, && OC \cdot \nu\leq 0\,.
\end{align}
And in turn, we can observe that~(\ref{sfaig}) is impossible. Indeed, the set $\{\eta\in\R^2:\, \eta\cdot \nu\leq 0\}$ is a half-space. And since $\angle AOC<\pi$, if this half-space contains both $OA$ and $OC$ then it must contain also $OB$, while as already observed $OB\cdot \nu>0$.

\step{IV}{Proof with an angle $\angle BOD<\pi$ containing a single radius.}
The next step consists in proving the thesis if there are two consecutive slices making together an angle strictly less than $\pi$. Notice that this is exactly what we have done in Step~III, except for the fact that we assumed there both slices to be colored. In this step we have then only to consider the case when one of the two slices is white. We assume the radii to be $BO,\, CO$ and $DO$, and without loss of generality we assume the points $B,\,C$ and $D$ to be ordered in the counterclockwise sense, and the slice between the radii $BO$ and $CO$ to be the white one, as in Figure~\ref{Figstep4}.\par
\begin{figure}[htbp]
\begin{tikzpicture}[>=>>>]
\filldraw[fill=white, draw=black, line width=1pt] (0,0) -- (-1.97,-.35) arc(190:260:2) -- cycle;
\filldraw[fill=green!15!white, draw=black, line width=1pt] (0,0) -- (-.35,-1.97) arc(260:340:2) -- cycle;
\draw[line width=1pt] (1.88,-.68) arc(-20:190:2);
\fill (0,0) circle (2pt);
\draw (0,0) node[anchor=south west] {$O$};
\fill (-1.97,-.35) circle (2pt);
\draw (-1.97,-.35) node[anchor=north east] {$B$};
\fill (-.35,-1.97) circle (2pt);
\draw (-.35,-1.97) node[anchor=north east] {$C$};
\fill (1.88,-.68) circle (2pt);
\draw (1.88,-.68) node[anchor=north west] {$D$};
\fill (-.66,-.12) circle (2pt);
\draw (-.66,-.12) node[anchor=south east] {$\widetilde B$};
\draw[dashed] (-.66,-.12) -- (-.35,-1.97);
\draw[<->] (0,0.3) -- (-.66,.18);
\draw (-.33,.28) node[anchor=south] {$\eps$};
\draw (-1.05,-.7) node {$E_0$};
\draw (.7,-1.1) node {$E_3$};
\filldraw[fill=white, draw=black, line width=1pt] (8,0) -- (6.03,-.35) arc(190:260:2) -- cycle;
\filldraw[fill=green!15!white, draw=black, line width=1pt] (8,0) -- (7.65,-1.97) arc(260:340:2) -- cycle;
\draw[line width=1pt] (9.88,-.68) arc(-20:190:2);
\fill (8,0) circle (2pt);
\draw (8,0) node[anchor=south east] {$O$};
\fill (6.03,-.35) circle (2pt);
\draw (6.03,-.35) node[anchor=north east] {$B$};
\fill (7.65,-1.97) circle (2pt);
\draw (7.65,-1.97) node[anchor=north east] {$C$};
\fill (9.88,-.68) circle (2pt);
\draw (9.88,-.68) node[anchor=north west] {$D$};
\fill (8.66,.12) circle (2pt);
\draw (8.66,.12) node[anchor=south west] {$H$};
\draw[dashed] (8.66,.12) -- (7.65,-1.97);
\draw[<->] (8,0.3) -- (8.66,.42);
\draw[dashed] (8,0) -- (8.66,.12);
\draw (8.33,.40) node[anchor=south] {$\eps$};
\draw (6.95,-.7) node {$E_0$};
\draw (8.7,-1.1) node {$E_3$};
\fill (8.51,-.19) circle (2pt);
\draw (8.7,-.25) node[anchor=north] {$W$};
\end{tikzpicture}
\caption{The situation in Step~IV.}\label{Figstep4}
\end{figure}
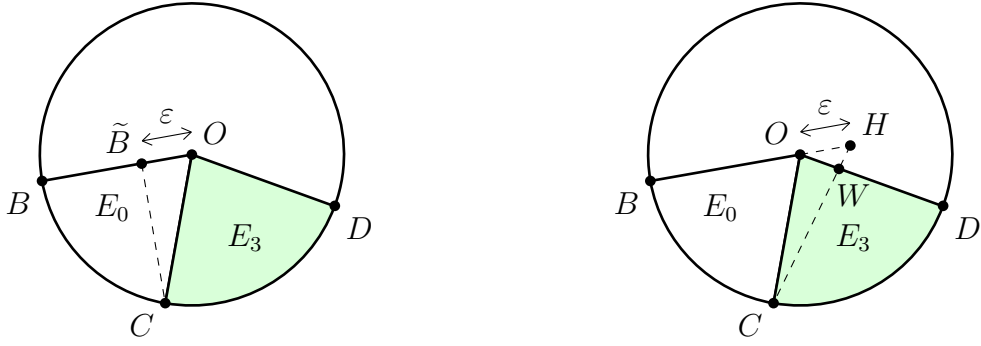
We define first a possible competitor $\F$ as in Figure~\ref{Figstep4}, left. Namely, for a small, positive $\eps$ we define $\widetilde B=\eps B$ and we let $\F$ be the cluster obtained by $\E'$ substituting the radius $OC$ with the segment $\widetilde BC$. This time, the difference between $\overline P(\E')$ and $\overline P(\F)$ is given not only by the different contribution of $OC$ and $\widetilde BC$, but also by the fact that the small segment $O\widetilde B$ is between a white and a colored slice in $\E'$, while it is between two colored slices in $\F$. Actually, the segment $O\widetilde B$ is not even in $\partial^*\F$ if the colored slice on the other side of $OB$ has the same color as the slice of the sector $COD$. Therefore,
\begin{equation}\label{estste4}
\overline P(\F) -\overline P(\E') \leq \h(\widetilde BC)-\h (OC) + \frac{\h(O\widetilde B)-\h(\widetilde B O)}2\,,
\end{equation}
and the inequality is strict if and only if the colored slice on the other side of $OB$ and the slice of the sector $COD$ have the same color. There is a constant $\kappa\in\R$ such that
\[
\h(\widetilde BC)-\h (OC) = \kappa \eps + o(\eps)
\]
(the exact value of $\kappa$ can be found as in~(\ref{1order}), but in this step this is not important). Hence, from~(\ref{estste4}) we get
\[
\overline P(\F) \leq \overline P(\E') + \eps \bigg(\kappa + \frac{\h(OB)-\h(BO)}2\bigg) + o(\eps) \,,
\]
so that the competitor $\F$ concludes the proof in this case unless
\begin{equation}\label{lastcase}
\kappa + \frac{\h(OB)-\h(BO)}2\geq 0\,.
\end{equation}
Let us then assume that this last inequality holds true, and let us define a different competitor, as in Figure~\ref{Figstep4}, right. More precisely, again for a small positive $\eps$ we define $H=-\eps B$, and we let $W$ be the point of intersection between the segments $HC$ and $OD$. The cluster $\F$ is then obtained by substituting the radius $OC$ with the segment $WC$. Arguing as before, and keeping in mind that the slice on the other side of $OD$ is surely colored by Step~II, we have this time
\begin{equation}\label{lastcomp}
\overline P(\F)-\overline P(\E')= \h(WC)-\h(OC) + \frac{\h(OW)-\h(WO)}2\,.
\end{equation}
Notice that
\[
\h(WC)-\h(OC) = \h(HC)-\h(OC)-\h(HW) = -\kappa \eps - \h(HW) +o(\eps)\,,
\]
so by~(\ref{lastcomp}) we get the thesis with some small $\eps>0$ if
\[
\lim_{\eps\searrow 0} \ \bigg(\kappa+\frac{\h(HW)}\eps + \frac{\h(WO)-\h(OW)}{2\eps}\bigg)>0\,,
\]
which in turn, thanks to~(\ref{lastcase}), is surely true if
\begin{equation}\label{lastst4}
\lim_{\eps\searrow 0} \frac{2\h(HW) + \h(WO)-\h(OW) + \h(OH)-\h(HO)}\eps>0\,.
\end{equation}
Consider now the triangle $WOH$ and observe that, by elementary geometric relations,
\begin{align*}
\angle WOH= \angle DOH = \pi - \angle BOD \,, &&
\angle HWO =\angle CWD > \angle COD \,, &&
\angle OHW = \angle BHC > \frac{\angle BOC}2 \,.
\end{align*}
Hence, the three angles of the triangle $WOH$ depend on $\eps$, but they are all greater than a strictly positive constant which does not depend on $\eps$. Since $\overline h$ is strictly convex, there exists then a constant $\delta_3>0$ such that
\begin{align*}
\h(HW)+\h(WO) \geq (1+\delta_3) \h(HO)\,, && \h(OH)+ \h(HW)\geq (1+\delta_3)\h(OW)\,.
\end{align*}
We deduce
\[\begin{split}
&\lim_{\eps\searrow 0} \frac{2\h(HW) + \h(WO)-\h(OW) + \h(OH)-\h(HO)}\eps\\
&\hspace{60pt}\geq \lim_{\eps\searrow 0} \frac{\delta_3\big(\h(HO)+\h(OW) \big)}\eps
\geq \lim_{\eps\searrow 0} \frac{\delta_3\h(HO)}\eps
=\delta_3\h(OB)>0\,,
\end{split}\]
so~(\ref{lastst4}) is established and the proof follows also in this case.

\step{V}{Conclusion.}
We are now ready to conclude the thesis. By Step~III and Step~IV, the only case which is left open is when there are exactly four radii, say $OA,\,OB,\,OC$ and $OD$, with the points $A,\,B,\,C,\,D$ ordered in the counterclockwise sense, and $\angle AOC=\angle BOD=\pi$, as in Figure~\ref{Figstep5}, left. Since by Step~II there can be at most one white slice, we assume that the slices corresponding to the sectors $AOB,\, BOC$ and $COD$ are colored.
\begin{figure}[htbp]
\begin{tikzpicture}[>=>>>]
\filldraw[fill=white, draw=black, line width=1pt] (0,0) -- (-1.97,.34) arc(170:290:2) -- cycle;
\filldraw[fill=yellow!15!white, draw=black, line width=1pt] (0,0) -- (.68,-1.88) arc(-70:-10:2) -- cycle;
\filldraw[fill=red!15!white, draw=black, line width=1pt] (0,0) -- (1.97,-.34) arc(-10:110:2) -- cycle;
\filldraw[fill=green!15!white, draw=black, line width=1pt] (0,0) -- (-.68,1.88) arc(110:170:2) -- cycle;
\fill (0,0) circle (2pt);
\draw (0,0) node[anchor=north east] {$O$};
\fill (-.68,1.88) circle (2pt);
\draw (-.68,1.88) node[anchor=south east] {$C$};
\fill (-1.97,.34) circle (2pt);
\draw (-1.97,.34) node[anchor=south east] {$D$};
\fill (1.97,-.34) circle (2pt);
\draw (1.97,-.34) node[anchor=north west] {$B$};
\fill (0.68,-1.88) circle (2pt);
\draw (.68,-1.88) node[anchor=north west] {$A$};
\fill (.66,-.12) circle (1.5pt);
\draw (.66,-.12) node[anchor=north] {$H$};
\fill (-.23,.63) circle (1.5pt);
\draw (-.23,.63) node[anchor=north east] {$K$};
\fill (.43,.51) circle (1.5pt);
\draw (.43,.51) node[anchor=south west] {$W$};
\draw[<->] (-.23,.63) -- (.43,.51);
\draw[<->] (.66,-.12) -- (.43,.51);
\draw (.54,.25) node[anchor=west] {$\eps$};
\draw (.10,.57) node[anchor=south] {$\eps$};
\draw[->] (2.7,.4) .. controls (4,.6) .. (5.3,.4);
\filldraw[fill=white, draw=black, line width=1pt] (8,0) -- (6.03,.34) arc(170:290:2) -- cycle;
\filldraw[fill=yellow!15!white, draw=black, line width=1pt] (8.43,.51) -- (8,0) -- (8.68,-1.88) arc(-70:-10:2) -- cycle;
\filldraw[fill=red!15!white, draw=black, line width=1pt] (8.43,0.51) -- (9.97,-.34) arc(-10:110:2) -- cycle;
\filldraw[fill=green!15!white, draw=black, line width=1pt] (8,0) -- (8.43,.51) -- (7.32,1.88) arc(110:170:2) -- cycle;
\fill (8,0) circle (2pt);
\draw (8,0) node[anchor=north east] {$O$};
\fill (7.32,1.88) circle (2pt);
\draw (7.32,1.88) node[anchor=south east] {$C$};
\fill (6.03,.34) circle (2pt);
\draw (6.03,.34) node[anchor=south east] {$D$};
\fill (9.97,-.34) circle (2pt);
\draw (9.97,-.34) node[anchor=north west] {$B$};
\fill (8.68,-1.88) circle (2pt);
\draw (8.68,-1.88) node[anchor=north west] {$A$};
\fill (8.66,-.12) circle (1.5pt);
\draw (8.66,-.12) node[anchor=north] {$H$};
\fill (7.77,.63) circle (1.5pt);
\draw (7.77,.63) node[anchor=north east] {$K$};
\fill (8.43,.51) circle (1.5pt);
\draw (8.43,.51) node[anchor=south west] {$W$};
\draw [dashed] (8,0) -- (7.32,1.88);
\draw [dashed] (8,0) -- (9.97,-.34);
\end{tikzpicture}
\caption{The situation in Step~V.}\label{Figstep5}
\end{figure}
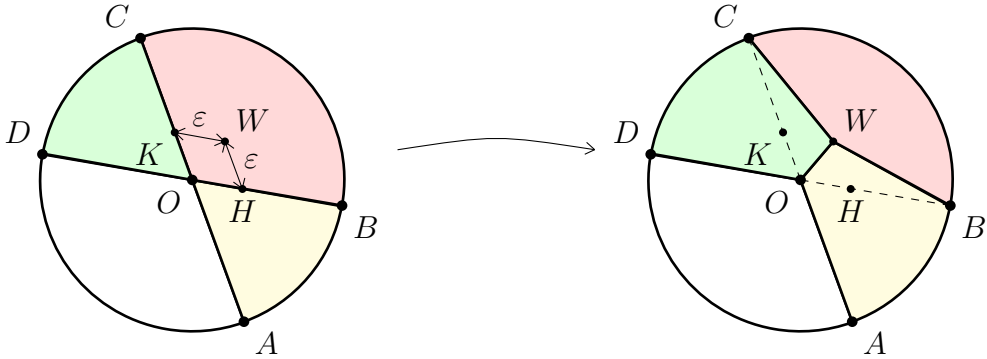
We are going to use only the fact that these slices are colored, the fact whether or not so is also the slice $DOA$ does not play any role. As in Step~III, let us call $\nu$ the direction of the outer normal at $B/\tilde\h(OB)$ to $\widetilde\C=\{\tilde\h\leq 1\}$. Since both the sectors $AOB$ and $BOC$ are colored, Step~III already gives the proof unless~(\ref{sfaig}) holds. As noticed in Step~III, (\ref{sfaig}) is in fact impossible if $\angle AOC<\pi$, and if $\angle AOC=\pi$ it holds only if $OA\cdot \nu =OC\cdot \nu=0$, which by the first order expansion~(\ref{1order}) implies that
\begin{equation}\label{lastcase1}
\tilde\h(OB-\eps OC)=\tilde\h(OB)+o(\eps)\,.
\end{equation}
Repeating the same argument in the union of the sectors $BOC$ and $COD$, which are also both colored and correspond to the angle $\angle BOD=\pi$, we get the thesis unless
\begin{equation}\label{lastcase2}
\tilde\h(OC-\eps OB) = \tilde\h(OC)+o(\eps)\,.
\end{equation}
To conclude, we have then only to find a suitable competitor under the assumption that~(\ref{lastcase1}) and~(\ref{lastcase2}) hold. In this final case, as in Figure~\ref{Figstep5}, right, we call $H=\eps B$, $K=\eps C$ and $W=\eps(B+C)=H+K$, and we define the cluster $\F$ substituting in $\partial\E'$ the radii $OB$ and $OC$ with the three segments $OW,\, WB$ and $WC$, so that in particular all the segments in $\partial\E'\Delta\partial\F$ are between two colored slices. We observe that
\[\begin{split}
\tilde\h(WB) &= \tilde\h(OB-OW) = \tilde\h\big((1-\eps)OB - \eps(OC)\big)
=(1-\eps) \tilde\h\bigg(OB - \frac \eps{1-\eps}\, OC\bigg)\\
&=(1-\eps) \tilde\h(OB) +o(\eps) = \tilde\h(HB)+o(\eps)\,,
\end{split}\]
where in the second last equality we have used~(\ref{lastcase1}). In the very same way, using~(\ref{lastcase2}), we have $\tilde\h(WC)=\tilde\h(KC)+o(\eps)$. Therefore, we get
\[\begin{split}
\overline P(\F)-\overline P(\E') &= \tilde\h(OW)+\tilde\h(WB)+\tilde\h(WC)-\tilde\h(OB)-\tilde\h(OC)\\
&= \tilde\h(OW)+\tilde\h(HB)+\tilde\h(KC)-\tilde\h(OB)-\tilde\h(OC)+o(\eps)\\
&=\eps \Big(\tilde\h(OB+OC)-\tilde\h(OB)-\tilde\h(OC)\Big) + o(\eps)\,,
\end{split}\]
and by the strict convexity of $\overline h$ we deduce $\overline P(\F)<\overline P(\E')$ for some small, positive $\eps$. The proof is then concluded.
\end{proof}

\begin{rmk}\label{fixdelta}
It is important to observe that the constant $\delta=\delta(\overline h)$ in the above proposition only depends on the norm $\overline h$. By continuity of $h$, we can then fix a constant $\delta>0$, depending only on $h$ and on $\textsf{D}$, such that $\delta\leq \delta(\overline h)$ for every $\overline h$ of the form $\overline h(v)=h(x,v)$ for some $x\in \textsf{D}$. We will apply Proposition~\ref{90prop} with such a choice.
\end{rmk}

We conclude this section by presenting a simple observation and an important consequence.

\begin{lem}\label{lemmagiorgio}
Let $\h:\R^2\to\R^+$ be a convex and positively $1$-homogeneous function, and for every path $\gamma:[0,1]\to\R^2$ of finite length let us call $\len(\gamma)$ the ``length of $\gamma$'' defined by
\begin{equation}\label{lengthcurves}
\len(\gamma) = \int_0^1 \h(\gamma'(\sigma))\,d\sigma\,.
\end{equation}
For any such path $\gamma$, then, one has
\[
\len(\gamma)\geq \len(\tilde\gamma)\,,
\]
where $\tilde\gamma:[0,1]\to\R^2$ is the affine path connecting $\gamma(0)$ with $\gamma(1)$.
%Moreover, if the ``unit ball'' $\{\h\leq 1\}$ is \emph{strictly} convex, then for every two constants $\kappa_1,\,\kappa_2>0$ there is a constant $\chi(\kappa_1,\kappa_2)>0$ such that, if $\gamma:[0,1]\to\R^2$ is a Lipschitz path parametrized by arclength, then
%\begin{align*}
%\bigg|\bigg\{t\in [0,1]:\, \frac{|\gamma'(t)-\tilde\gamma'(t)|}{|\tilde\gamma'(t)|}>\kappa_1 \bigg\}\bigg|>\kappa_2
%&& \Longrightarrow &&
%\frac{\len(\gamma)}{\len(\tilde\gamma)}\geq 1+ \chi(\kappa_1,\kappa_2)\,.
%\end{align*}
\end{lem}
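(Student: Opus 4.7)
The plan is to deduce the inequality as a direct application of Jensen's inequality, exploiting the two hypotheses on $\h$ (convexity and positive $1$-homogeneity).

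First I would compute the right-hand side explicitly. Since $\tilde\gamma(\sigma)=(1-\sigma)\gamma(0)+\sigma\gamma(1)$, one has $\tilde\gamma'(\sigma)=\gamma(1)-\gamma(0)$ for every $\sigma\in[0,1]$, hence by definition~(\ref{lengthcurves})
\[
\len(\tilde\gamma)=\int_0^1 \h\bigl(\gamma(1)-\gamma(0)\bigr)\,d\sigma=\h\bigl(\gamma(1)-\gamma(0)\bigr).
\]
Note that positive $1$-homogeneity is not strictly needed here, but it is what guarantees that this quantity coincides with the natural notion of $\h$-length of the segment.

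Next, since $\gamma$ has finite length it is (up to reparametrization) absolutely continuous, so $\gamma'\in L^1([0,1];\R^2)$ and $\int_0^1 \gamma'(\sigma)\,d\sigma = \gamma(1)-\gamma(0)$. Now apply Jensen's inequality to the convex function $\h:\R^2\to\R^+$ with respect to the probability measure $d\sigma$ on $[0,1]$: this gives
\[
\h\!\left(\int_0^1 \gamma'(\sigma)\,d\sigma\right)\leq \int_0^1 \h\bigl(\gamma'(\sigma)\bigr)\,d\sigma=\len(\gamma).
\]
Combining the two displays yields $\len(\tilde\gamma)=\h(\gamma(1)-\gamma(0))\leq \len(\gamma)$, which is the thesis.

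There is really no obstacle here; the only minor point worth mentioning is the regularity of $\gamma$. The statement only assumes finite length, so a priori $\gamma$ might merely be rectifiable; however, the integral in~(\ref{lengthcurves}) tacitly requires $\gamma'$ to exist almost everywhere, which is the case for absolutely continuous parametrizations, and any rectifiable curve admits such a parametrization (e.g.\ by arc length). Under this understanding, Jensen's inequality applies without modification and concludes the proof.
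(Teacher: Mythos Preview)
Your proof is correct and takes essentially the same approach as the paper: a direct application of Jensen's inequality to the convex function $\h$ and the probability measure $d\sigma$ on $[0,1]$, yielding $\len(\gamma)\geq \h(\gamma(1)-\gamma(0))=\len(\tilde\gamma)$. Your added remark on the absolute continuity needed to write $\int_0^1\gamma'=\gamma(1)-\gamma(0)$ is a reasonable clarification that the paper leaves implicit.
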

\begin{proof}
%The first property
This is a direct application of Jensen lemma,
\[
\len(\gamma) = \int_0^1 \h(\gamma'(\sigma))\,d\sigma
\geq \h\bigg(\int_0^1 \gamma'(\sigma)\,d\sigma\bigg)
= \h\Big(\gamma(1)-\gamma(0)\Big)
=\len(\tilde\gamma)\,.
\]
\end{proof}

\begin{cor}\label{helpnolakes}
Let $\h$ and $\len$ be as in Lemma~\ref{lemmagiorgio}, and let $\tau_1,\,\tau_2:[0,1]\to\R^2$ be two injective paths of finite length which have no intersection except the points $P=\tau_1(0)=\tau_2(1)$ and $Q=\tau_1(1)=\tau_2(0)$. Then, there exists an injective path $\tau:[0,1]\to\R^2$ with $\tau(0)=P$ and $\tau(1)=Q$, which is entirely contained in the (closed) region enclosed by $\tau_1\cup\tau_2$, and such that, setting $\hat\tau:[0,1]\to\R^2$ as $\hat\tau(t)=\tau(1-t)$, one has
\begin{equation}\label{half}
\len(\tau) + \len(\hat\tau) \leq \len(\tau_1)+\len(\tau_2)\,.
\end{equation}
\end{cor}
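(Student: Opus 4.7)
The natural first candidate for $\tau$ is the straight segment $\sigma\colon[0,1]\to\R^2$, $t\mapsto(1-t)P+tQ$, from $P$ to $Q$. If $\sigma$ lies entirely in the closed region $\bar\Omega$ enclosed by $\tau_1\cup\tau_2$, then $\tau:=\sigma$ is an injective path in $\bar\Omega$ joining $P$ and $Q$, and the desired inequality follows immediately by applying Lemma~\ref{lemmagiorgio} separately to $\tau_1$ (whose affine shortcut is $\sigma$) and to $\tau_2$ (whose affine shortcut is $\hat\sigma$): this gives $\len(\sigma)\le\len(\tau_1)$ and $\len(\hat\sigma)\le\len(\tau_2)$, and summing the two yields $\len(\tau)+\len(\hat\tau)\le\len(\tau_1)+\len(\tau_2)$.

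If instead $\sigma\not\subset\bar\Omega$, then $\sigma$ enters and exits $\bar\Omega$ through crossings lying on $\partial\bar\Omega=\tau_1\cup\tau_2$, decomposing $\sigma$ into alternating maximal interior and exterior sub-segments (finitely many, modulo compactness and the finiteness of $\len(\tau_1)+\len(\tau_2)$). I would build $\tau$ by keeping the interior pieces of $\sigma$ and replacing each exterior sub-segment $[X,Y]$ (with $X,Y\in\partial\bar\Omega$) by the arc $\alpha_{XY}$ of $\partial\bar\Omega$ that, together with $[X,Y]$, bounds the corresponding exterior excursion region. The resulting concatenation lies in $\bar\Omega$, runs from $P$ to $Q$, and is injective up to an arbitrarily small perturbation (the arcs $\alpha_{XY}$ are pairwise disjoint sub-arcs of $\partial\bar\Omega$ by the Jordan structure of $\bar\Omega$).

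To verify the length bound in this second case, I would apply Lemma~\ref{lemmagiorgio} to each closed loop formed by $[X,Y]$ and $\alpha_{XY}$, obtaining for each excursion a local comparison controlling $\len(\alpha_{XY})+\len(\bar\alpha_{XY})$ in terms of the straight piece and the corresponding portion of $\tau_1\cup\tau_2$. Setting $\tilde\h(v)=(\h(v)+\h(-v))/2$ so that $\len(\tau)+\len(\hat\tau)=2\tilde\len(\tau)$, summing across all excursions combines with the trivial estimate $\len(\sigma|_{\rm in})+\len(\hat\sigma|_{\rm in})\le 2\tilde\h(Q-P)$ to yield the claim. The hard part is exactly this last bookkeeping step: in the anisotropic case the forward and backward contributions of each boundary arc are asymmetric and must be tracked separately, and the telescoping across multiple excursions requires that the replaced arcs and the leftover parts of $\tau_1\cup\tau_2$ be decomposed consistently, so that Jensen's inequality (Lemma~\ref{lemmagiorgio}) applied excursion by excursion aggregates to the global bound without double-counting.
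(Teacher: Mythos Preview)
Your first case (when the segment $PQ$ already lies in $\bar\Omega$) is fine, but the second case has a genuine gap that cannot be repaired by bookkeeping: the path you construct can simply violate~\eqref{half}. Take the Euclidean norm $\h(v)=|v|$, let $P=(0,0)$, $Q=(2,0)$, let $\tau_2$ be the tent $Q\to(1,1)\to P$, and let $\tau_1$ be a fine zigzag from $P$ to $Q$ that stays in a thin strip $\{0<y\le 0.1\}$ under $\tau_2$ and has Euclidean length, say, at least $50$. Then $\bar\Omega$ is the region between the zigzag and the tent, the segment $PQ$ is entirely exterior, and your rule ``replace the exterior segment by the arc of $\partial\bar\Omega$ bounding the excursion'' returns $\tau=\tau_1$. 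But then
\[
\len(\tau)+\len(\hat\tau)=2\len(\tau_1)\ge 100,
\qquad
\len(\tau_1)+\len(\tau_2)\le 50+2\sqrt 2,
\]
so~\eqref{half} fails. The obstruction is structural: Lemma~\ref{lemmagiorgio} only gives \emph{lower} bounds on curve lengths in terms of their chords, while replacing a chord by a boundary arc requires an \emph{upper} bound on the arc, which you simply do not have.

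The paper's proof avoids this by never forcing $\tau$ to follow $\partial\bar\Omega$. Instead it approximates $\tau_1,\tau_2$ by polygonal paths and inducts on the number of sides of the resulting polygon $\PP$. If two non-adjacent vertices of the same path see each other through the interior of $\PP$, one shortcuts between them (Lemma~\ref{lemmagiorgio} controls the drop) and applies induction to the smaller polygon. If no such shortcut exists, one locates a convex vertex $B$ of $\tau_1$ whose ``ear'' triangle contains a vertex $D$ of $\tau_2$, and splits $\PP$ into two sub-polygons through $D$; inducting on each piece and concatenating yields $\tau$. The point is that this $\tau$ is free to cut across the interior of $\bar\Omega$---in the zigzag example it would run roughly along the top of the thin strip---rather than being forced onto a long piece of $\partial\bar\Omega$.
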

\begin{proof}
By approximation, we can assume that the paths $\tau_1$ and $\tau_2$ are done by finitely many linear pieces, so that the region enclosed by $\tau_1\cup\tau_2$ is a closed polygon $\PP$. In addition, we can also assume that \emph{every} couple of vertices of $\PP$ (not only the couples of consecutive vertices) corresponds to a different direction, so in particular there are no three aligned vertices. We argue then by induction on the number $N$ of sides of $\PP$.\par

If $N=3$, then necessarily one of the paths, say $\tau_1$, is simply the segment $PQ$, and the other path is done by two linear pieces, say $QB$ and $BP$. In this case, it is enough to call $\tau$ the segment between $P$ and $Q$, and then~(\ref{half}) is obvious by Lemma~\ref{lemmagiorgio}, since
\[\begin{split}
\len(\tau_1)+\len(\tau_2) &= \len(PQ) + \len(QB)+\len(BP) \geq \len (PQ) + \len(QP) \\
&=\len(\tau) + \len(\hat\tau) \,.
\end{split}\]
Let us then assume that $N\geq 4$ and that the claim has been proven for all the polygons with strictly less sides than $N$. If there are two vertices $B,\, D$ in $\tau_1$ such that the open segment $BD$ is contained in the interior of $\PP$, then we can call $\tilde\tau_1$ the path obtained by $\tau_1$ by substituting the whole part between $B$ and $D$ with the segment $BD$, and $\tilde\tau_2=\tau_2$. The resulting polygon $\widetilde\PP$ is contained in $\PP$ and has strictly less than $N$ vertices. By assumption, we find then a path $\tau$ as in the claim for the polygon $\widetilde\PP$. The path $\tau$ is contained in $\widetilde\PP$, hence in $\PP$, and~(\ref{half}) holds true since, again by Lemma~\ref{lemmagiorgio},
\[
\len(\tau)+\len(\hat\tau) \leq \len(\tilde\tau_1) + \len(\tilde\tau_2) \leq \len(\tau_1)+\len(\tau_2)\,.
\]
In the same way we argue if the two vertices $B,\, D$ belong to $\tau_2$.\par
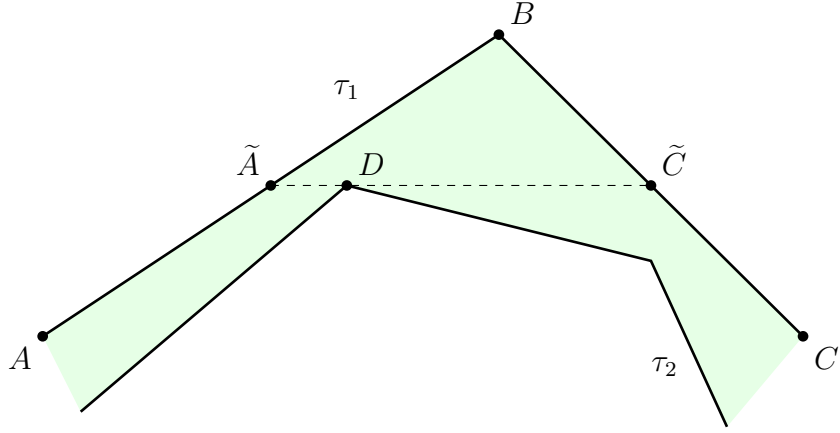
\begin{figure}[htbp]
\begin{tikzpicture}
\fill[fill=green!10!white] (0,0) -- (6,4) -- (10,0) -- (9,-1.2) -- (8,1) -- (4,2) -- (.5,-1) -- cycle;
\fill (0,0) circle (2pt);
\draw (0,0) node[anchor=north east] {$A$};
\fill (3,2) circle (2pt);
\draw (3,2) node[anchor=south east] {$\widetilde A$};
\fill (6,4) circle (2pt);
\draw (6,4) node[anchor=south west] {$B$};
\fill (8,2) circle (2pt);
\draw (8,2) node[anchor=south west] {$\widetilde C$};
\fill (10,0) circle (2pt);
\draw (10,0) node[anchor=north west] {$C$};
\fill (4,2) circle (2pt);
\draw (4,2) node[anchor=south west] {$D$};
\draw[line width=1pt] (0,0) -- (6,4) -- (10,0);
\draw[line width=1pt] (.5,-1) -- (4,2) -- (8,1) -- (9,-1.2);
\draw[dashed] (3,2) -- (8,2);
\draw (4,3) node[anchor=south] {$\tau_1$};
\draw (8.5,-.4) node[anchor=east] {$\tau_2$};
\end{tikzpicture}
\caption{A possible situation in Corollary~\ref{helpnolakes}.}\label{FigFra}
\end{figure}
Let us finally assume that there are no such vertices. A possible situation is depicted in Figure~\ref{FigFra}. We can take three consecutive vertices $A,\, B,\, C$ in one of the paths, say $\tau_1$, such that the angle at $B$ is less than $\pi$ (this is clearly possible since every polygon has at least three angles less than $\pi$, so at least one vertex of $\PP$ different from $P$ and $Q$ corresponds to an angle less than $\pi$). Since the open segment $AC$ cannot be contained in the interior of $\PP$ by assumption, the triangle $ABC$ contains other vertices of the polygon. In particular, there are two points $\widetilde A$ and $\widetilde C$ in $AB$ and $BC$ respectively, such that the open segment $\widetilde A \widetilde C$ is parallel to $AC$ and intersects $\partial\PP$ exactly at one point, say $D$. The point $D$ is necessarily a vertex of the polygon, and the open segment $BD$ is contained in the interior of $\PP$, thus by assumption $D$ must be contained in $\tau_2$. Let us then call $\tau_1^1$ the path obtained by taking the part of $\tau_1$ between $P$ and $\widetilde A$ and adding the segment $\widetilde A D$, and $\tau_1^2$ the segment $D\widetilde C$ together with the part of $\tau_1$ between $\widetilde C$ and $Q$. Moreover, subdivide $\tau_2$ in the part $\tau_2^2$ between $Q$ and $D$, and the part $\tau_2^1$ between $D$ and $P$. The paths $\tau_1^1$ and $\tau_2^1$ enclose a polygon $\PP^1$, while $\tau_1^2$ and $\tau_2^2$ enclose $\PP^2$. Both polygons are contained in $\PP$ and have strictly less sides than $\PP$, thus by inductive assumption we obtain a path $\tau^1$ in $\PP^1$ between $P$ and $D$ and a path $\tau^2$ in $\PP^2$ beween $D$ and $Q$, which satisfy the inequalities analogous to~(\ref{half}). The path $\tau$ obtained putting together $\tau^1$ and $\tau^2$ is then a path in $\PP^1\cup\PP^2\subseteq \PP$, and satisfies~(\ref{half}) since applying once again Lemma~\ref{lemmagiorgio} we have
\[\begin{split}
\len(\tau) + \len(\hat\tau) &= \len(\tau^1) + \len(\widehat{\tau^1}) + \len(\tau^2) + \len(\widehat{\tau^2})\\
&\leq \len(\tau_1^1) + \len(\tau_2^1) + \len(\tau_1^2) + \len(\tau_2^2)
\leq \len(\tau_1)+\len(\tau_2)\,.
\end{split}\]
\end{proof}

\subsection{Finitely many triple points\label{triplepoints}}

We now start our construction for proving Theorem~\mref{main}. Through this section and the following one, $\E$ is a fixed, minimal cluster, satisfying the assumptions of Theorem~\mref{main}, and $\textsf{D}$ is a fixed, closed ball. The aim of this section is to show several preliminary properties of $\E$, eventually establishing that $\partial^* \E$ only admits (in $\textsf{D}$) finitely many junction points, and all of them are triple points. This will be obtained in Lemma~\ref{dist>R6}.\par

We set $R_1=\min\{R_\beta,\,R_\eta\}$ (the constants $R_\beta$ and $R_\eta$ have been defined in Definition~\ref{defge}). In the following, we will define several different values of $R_i$ with $R_1\geq R_2\geq R_3\, \cdots$. Each of these constants will only depend on $\E$, $\textsf{D}$, $g$ and $h$.

Our first result is a simple observation following from the $\eps-\eps^\beta$ property and the $\eta$-growth condition, that one can use to build competitors. We will use it several times in the sequel.

\begin{lem}[Small ball competitor]\label{labello}
Let $B(x,r)\subseteq \textsf{D}$ be a ball such that $|B(x,r)|<\bar\eps/2$ and $r<R_1$, let $\E,\, \E'$ be any two clusters which coincide outside $B(x,r)$, and call $\eps=|\E|-|\E'|$. There exists another cluster $\E''$ such that $|\E''|=|\E|$, $\E''\cap B(x,r)=\E'\cap B(x,r)$ and
\begin{equation}\label{proplab}
P(\E'')\leq P(\E') + \Ceeb |\eps|^\beta \leq P(\E') + \Ceeb (2\Cgr r^\eta)^\beta \,.
\end{equation}
This inequality is actually true also with $\Ceeb[|\eps|]$ in place of $\Ceeb$.
\end{lem}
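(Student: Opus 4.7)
My plan is to apply the $\eps-\eps^\beta$ property for the minimal cluster $\E$ to produce a competitor $\F$ that corrects the volume deficit \emph{outside} the ball $B(x,r)$, and then glue $\E'$ on $B(x,r)$ with $\F$ on $\R^2\setminus B(x,r)$. The hypothesis $r<R_1\leq R_\beta$ ensures the existence of a genuine annulus $B(x,R_\beta)\setminus\overline{B(x,r)}$ on which both $\F$ and $\E'$ coincide with $\E$, and this is precisely what prevents any spurious perimeter from arising at the interface $\partial B(x,r)$; this is the only nontrivial technical point of the proof, and once it is handled the rest is routine bookkeeping.

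First, I would estimate $|\eps|$. Since $\E=\E'$ outside $B(x,r)$, each component $\eps_i=|E_i|-|E_i'|=|E_i\cap B(x,r)|-|E_i'\cap B(x,r)|$ has absolute value at most $|B(x,r)|$; summing over $i$ gives $|\eps|\leq 2|B(x,r)|\leq 2\Cgr r^\eta$ by the $\eta$-growth condition (applicable since $r<R_1\leq R_\eta$). The assumption $|B(x,r)|<\bar\eps/2$ then forces $|\eps|<\bar\eps$, so I may invoke the $\eps-\eps^\beta$ property of $\E$ with vector $\eps$ and center $x$, obtaining a cluster $\F$ satisfying $\F\Delta\E\subseteq \R^2\setminus B(x,R_\beta)$, $|\F|=|\E|+\eps$, and $P(\F)\leq P(\E)+\Ceeb|\eps|^\beta$. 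I then define $\E''$ by prescribing $E_i''\cap B(x,r)=E_i'\cap B(x,r)$ and $E_i''\setminus B(x,r)=F_i\setminus B(x,r)$. The identity $|\E''|=|\E|$ is a direct computation using $F_i\cap B(x,r)=E_i\cap B(x,r)$ (since $\F=\E$ on $B(x,R_\beta)\supseteq B(x,r)$) and $|F_i|=|E_i|+\eps_i$.

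The key structural observation is that $\E''=\E'$ almost everywhere on $B(x,R_\beta)$: on $B(x,r)$ by construction, and on the annulus $B(x,R_\beta)\setminus\overline{B(x,r)}$ through the chain $\E''=\F=\E=\E'$, which uses in order the definition of $\E''$ outside $B(x,r)$, the fact that $\F=\E$ on $B(x,R_\beta)$, and the fact that $\E'=\E$ on $\R^2\setminus B(x,r)$. I would then pick a radius $r'\in(r,R_\beta)$ for which Vol'pert's theorem holds for $\E$, $\E'$ and $\F$ (and hence also for $\E''$, whose reduced boundary near $\partial B(x,r')$ agrees with that of $\E'$), and split every perimeter at $\partial B(x,r')$. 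The a.e.\ identifications above yield $P(\E'';B(x,r'))=P(\E';B(x,r'))$ and $P(\E'';\R^2\setminus\overline{B(x,r')})=P(\F;\R^2\setminus\overline{B(x,r')})$, while $\E'=\E$ outside $B(x,r)$ and $\F=\E$ on $B(x,R_\beta)$ also give $P(\E';\R^2\setminus\overline{B(x,r')})=P(\E;\R^2\setminus\overline{B(x,r')})$ and $P(\F;B(x,r'))=P(\E;B(x,r'))$. Summing and cancelling the identical pieces produces the exact identity
\[
P(\E'')-P(\E')=P(\F)-P(\E)\leq \Ceeb|\eps|^\beta,
\]
and combining with $|\eps|\leq 2\Cgr r^\eta$ gives the second inequality of the statement. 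The refinement with $\Ceeb[|\eps|]$ is immediate, since the $\eps-\eps^\beta$ property with the sharper constant gives $P(\F)-P(\E)\leq \Ceeb[|\eps|]\,|\eps|^\beta$.
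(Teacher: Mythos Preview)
Your proposal is correct and follows essentially the same route as the paper: bound $|\eps|$ by $2|B(x,r)|$, invoke the $\eps-\eps^\beta$ property of $\E$ at $x$ to obtain $\F$, then glue $\E'$ inside $B(x,r)$ with $\F$ outside and check volume and perimeter. The only difference is that you are more explicit about the perimeter splitting, picking a Vol'pert radius $r'\in(r,R_\beta)$ in the annulus where all four clusters coincide; the paper simply writes $P(\E'')=P(\E';B(x,r))+P(\F;\R^2\setminus B(x,r))$ and uses the same chain of identifications, relying implicitly on the agreement across the annulus to ensure no boundary contribution appears at $\partial B(x,r)$.
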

\begin{proof}
We start by noticing that
\[
|\eps| \leq \sum_{i=1}^m |\eps_i| \leq 2 |B(x,r)| < \bar\eps\,,
\]
hence we can apply the $\eps-\eps^\beta$ property to $\E$ with constant $\eps$ and point $x$. Hence, there is another cluster $\F$ such that $\F=\E$ inside $B(x,R_\beta)\supseteq B(x,r)$, and moreover $|\F|=|\E|+\eps$ and $P(\F)\leq P(\E) + \Ceeb[|\eps|] |\eps|^\beta$. We define then the cluster $\E''$ as the cluster which coincides with $\E'$ inside $B(x,r)$, and with $\F$ outside of $B(x,r)$. Its volume is then
\[
|\E''| = |\E' \cap B(x,r)| + |\F\setminus B(x,r)|
 = |\E\cap B(x,r)| + |\E'| - |\E| + |\E\setminus B(x,r)| + |\F| - |\E|
=|\E|\,.
\]
Keeping in mind the growth condition, we have
\[
|\eps| \leq 2 |B(x,r)| \leq 2 \Cgr r^\eta\,.
\]
As a consequence, the perimeter of $\E$ can be evaluated as
\[\begin{split}
P(\E'') &= P(\E'; B(x,r)) + P\big(\F;\R^2\setminus B(x,r)\big)\\
&= P(\E; B(x,r)) +P(\E')-P(\E) + P\big(\E;\R^2\setminus B(x,r)\big) + P(\F)-P(\E)\\
&\leq P(\E') + \Ceeb[|\eps|] |\eps|^\beta
\leq P(\E') + \Ceeb[|\eps|] \big(2\Cgr r^\eta\big)^\beta\,.
\end{split}\]
Keeping in mind that $\Ceeb[|\eps|]\leq \Ceeb[\bar\eps]\leq\Ceeb$, the proof is then concluded.
\end{proof}

\begin{lem}[Perimeter in a ball is controlled by radius]\label{13/2}
There exists a constant $R_2\leq R_1$ such that, for every $B(x,r)\subseteq \textsf{D}$ with $r<R_2$, one has
\begin{equation}\label{step1}
\haus^1(\partial^* \E\cap B(x,r)) < 7\,\frac{h_{\rm max}}{h_{\rm min}} \, r\,.
\end{equation}
\end{lem}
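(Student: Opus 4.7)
The plan is to compare $\E$ with a competitor obtained by ``whitening'' a sub-ball and apply minimality. First I would fix $R_2\leq R_1$ small enough that $\Cgr R_2^\eta<\bar\eps/2$ (so Lemma~\ref{labello} applies inside $B(x,R_2)$), and such that an additional smallness condition, to be determined below, holds. Given $B(x,r)\subseteq D$ with $r<R_2$, by Vol'pert's Theorem~\ref{volpert} the $1$-dimensional measure of $\partial^*\E\cap\partial B(x,s)$ vanishes for a.e. $s\in(0,r)$. For any such good radius $s$, define the competitor
\[
\E'=\{E_1\setminus B(x,s),\,\ldots,\,E_m\setminus B(x,s)\},
\]
which coincides with $\E$ outside $B(x,s)$.

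The key geometric estimate is that, up to $\H^1$-null sets, the reduced boundary of $\E'$ inside $\overline{B(x,s)}$ lies on the sphere $\partial B(x,s)$. Using the decomposition $\partial^*(A\setminus B)=(\partial^*A\setminus\overline B)\cup(\partial^*B\cap A^{(1)})$ applied to both $A=\cup_{i=1}^m E_i$ and $A=E_i$, and exploiting the essential disjointness of the $E_i$, one obtains
\[
P(\E')\leq P(\E)-P(\E;B(x,s))+2\pi\, h_{\rm max}\, s,
\]
since the new boundary pieces on $\partial B(x,s)$ contribute at most $h_{\rm max}$ per unit Euclidean length and their total Euclidean length is at most $2\pi s$. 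Applying Lemma~\ref{labello} to correct the volume produces a cluster $\E''$ with $|\E''|=|\E|$ and
\[
P(\E'')\leq P(\E')+\Ceeb\,(2\Cgr\, s^\eta)^\beta.
\]
Minimality of $\E$ then yields
\[
P(\E;B(x,s))\leq 2\pi h_{\rm max}\, s+\Ceeb\,(2\Cgr)^\beta\, s^{\eta\beta}.
\]

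To conclude, I would observe that every point of $\partial^*\E$ contributes at least $h_{\rm min}$ per unit $\H^1$-length to $P(\E;\cdot)$, whether it lies on a single or on a double boundary, so $P(\E;B(x,s))\geq h_{\rm min}\,\H^1(\partial^*\E\cap B(x,s))$. Letting $s\nearrow r$ through good radii and using continuity of measures from below gives
\[
\H^1(\partial^*\E\cap B(x,r))\leq \frac{2\pi h_{\rm max}\, r+\Ceeb\,(2\Cgr)^\beta\, r^{\eta\beta}}{h_{\rm min}}.
\]
Since $2\pi<7$, the target bound $7\,h_{\rm max}/h_{\rm min}\cdot r$ will follow provided $\Ceeb\,(2\Cgr)^\beta r^{\eta\beta-1}\leq (7-2\pi)\,h_{\rm max}$. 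In case (i), when $\eta\beta>1$, this is achieved by further shrinking $R_2$. In the borderline case (ii) with $\eta\beta=1$, the inequality becomes a bound on $\Ceeb$ alone, and is built into~(\ref{Cperissmall}) by defining $\Ceeb^1$ to be precisely $(7-2\pi)\,h_{\rm max}/(2\Cgr)^\beta$. The only mildly subtle point is the justification that Vol'pert's theorem reduces the analysis of boundary corrections to the almost-Euclidean contribution $2\pi h_{\rm max} s$; everything else is a matching of constants.
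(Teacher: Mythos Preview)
Your proof is correct and follows essentially the same strategy as the paper: replace the cluster inside a small ball by a trivial configuration, bound the added boundary by $2\pi h_{\rm max}$ times the radius, fix the volume via Lemma~\ref{labello}, and invoke minimality. The only cosmetic differences are that the paper puts the ball into $E_1$ rather than into $E_0$, works directly at radius $r$ without the Vol'pert/limit step, and arrives at the slightly different constant $\Ceeb^1 = h_{\rm max}/(2^{\beta+1}\Cgr^\beta)$; none of these affect the argument.
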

\begin{proof}
We let $R_2\leq R_1$ be so small that
\begin{align}\label{choiceR2}
\Cgr R_2^\eta <\frac{\bar\eps} 2\,, &&
\Ceeb \big( 2 \Cgr R_2^\eta\big)^\beta < \frac{h_{\rm max}}2\,R_2\,.
\end{align}
Notice that the first inequality is true for every $R_2$ small enough. The same is true for the second one if $\eta\beta>1$. Instead, if $\eta\beta=1$, the second inequality is true regardless of the value of $R_2$ thanks to~(\ref{Cperissmall}) as soon as we define
\begin{equation}\label{ceeb1}
\Ceeb^1 =\frac{h_{\rm max}}{2^{\beta+1}\Cgr^\beta} \,.
\end{equation}
Take now $r<R_2$ and $x\in\R^2$ as in the claim. Let us define the cluster $\E'$ by setting $E_1'=E_1\cup B(x,r)$ and $E_i'=E_i\setminus B(x,r)$ for every $2\leq i\leq m$. Clearly
\begin{equation}\label{quellaltro}
P(\E')\leq P(\E) - h_{\rm min} \haus^1(\partial^* \E \cap B(x,r)) + 2\pi r h_{\rm max}\,.
\end{equation}
Let us call $\eps\in\R^m$ the vector given by $\eps_i=|E_i\cap B(x,r)|$ for every $2\leq i\leq m$, and $\eps_1=-|B(x,r)\setminus E_1|$, so that $|\E|=|\E'|+\eps$. Notice that $|B(x,r)|\leq \Cgr r^\eta<\bar\eps/2$ by the first property in~(\ref{choiceR2}). Hence, we can apply Lemma~\ref{labello} to get another cluster $\E''$ satisfying~(\ref{proplab}), so that
\[
P(\E'') \leq P(\E') + \Ceeb (2\Cgr r^\eta)^\beta < P(\E') + \frac{h_{\rm max}}2\, r
\]
by the second property in~(\ref{choiceR2}), which is clearly valid with every $r<R_2$ in place of $R_2$. Putting this estimate together with~(\ref{quellaltro}), and recalling that $P(\E)\leq P(\E'')$ by minimality of $\E$ and since $|\E''|=|\E|$ by Lemma~\ref{labello}, we get
\[
\haus^1(\partial^*\E\cap B(x,r)) 
\leq 2\pi r \,\frac{h_{\rm max}}{h_{\rm min}} + \frac{h_{\rm max}}{2h_{\rm min}}\,r
<7\,\frac{h_{\rm max}}{h_{\rm min}} \, r\,,
\]
hence the proof is concluded.
\end{proof}

We can now show that there can be no ``islands'' in small balls, that is, if a set $E_i$ intersects a small ball then it must also intersect its boundary. Notice that, at least for the moment, $i$ cannot attain the value $0$, hence it is still possible that there is a empty hole (or ``lake'') compactly contained inside a small ball. We will rule out this possibility later.

\begin{lem}[No-islands]\label{noisland}
There exist a constant $K>0$, only depending on $\textsf{D},\,h$ and $\E$, and a constant $R_3\leq R_2$, such that for every $r< R_3$, every finite perimeter set $G\subseteq B(x,r)\subseteq \textsf{D}$ and every $1\leq i \leq m$, one has
\begin{equation}\label{effnoisl}
\haus^1\big(\partial^* (E_i \cap G)\big) \leq K \haus^1\big(E_i \cap \partial^* G\big)\,.
\end{equation}
In particular, if for some $1\leq i\leq m$ one has
\begin{equation}\label{intint}
|E_i \cap G|>0\,,
\end{equation}
then also
\begin{equation}\label{intbor}
\haus^1(E_i \cap \partial^* G)>0\,.
\end{equation}
\end{lem}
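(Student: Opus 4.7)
The plan is to compare $\E$ with a competitor obtained by emptying $A := E_i \cap G$ into the white chamber. Define $\E'$ by $E_i' = E_i \setminus G$, $E_0' = E_0 \cup A$, and $E_k' = E_k$ for $k \notin \{0,i\}$. Choose $R_3 \leq R_2$ small enough that $|B(x,R_3)| < \bar\eps/2$, so that the volume deficit $\eps = |\E| - |\E'|$ satisfies $|\eps| = |A| \leq \Cgr r^\eta < \bar\eps$. Lemma~\ref{labello} applied to $\E, \E'$ produces a volume-preserving $\E''$ with $\E''\cap B(x,r) = \E'\cap B(x,r)$ and $P(\E'')\leq P(\E') + \Ceeb|A|^\beta$, so that minimality of $\E$ yields $P(\E) - P(\E') \leq \Ceeb|A|^\beta$. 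A direct computation from the definition~(\ref{PbE}), decomposing
\[
\partial^*(E_i\cap G) \subseteq (\partial^* E_i\cap G^\circ) \cup (E_i^{(1)}\cap\partial^* G) \cup (\partial^* E_i\cap\partial^* G)
\]
up to $\H^1$-null sets and splitting $\partial^* E_i\cap G^\circ$ by which chamber sits on the other side, identifies the sources of perimeter variation: a genuine loss of at least $h_{\min}$ per unit length on $\partial^* E_i \cap E_0^{(1)} \cap G^\circ$; a signed contribution of magnitude at most $(h_{\max}-h_{\min})/2$ on each $\partial^* E_i \cap E_k^{(1)} \cap G^\circ$ with $k \neq 0,i$ (this vanishes in the symmetric case); a gain of at most $h_{\max}$ per length on $E_i^{(1)} \cap \partial^*G$; and a bounded contribution on $\partial^* E_i \cap \partial^* G$. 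This gives a linear control of $\H^1(\partial^* E_i \cap E_0^{(1)} \cap G^\circ)$ by the other three terms plus $\Ceeb|A|^\beta / h_{\min}$.

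To bound the mixed pieces $\H^1(\partial^* E_i \cap E_k^{(1)} \cap G^\circ)$ for $k\neq 0,i$, I would run the same construction $m-1$ more times, using for each $j\in\{1,\dots,m\}\setminus\{i\}$ the competitor that absorbs $A$ into $E_j$ rather than $E_0$. Each such choice makes the $E_j$-piece into the genuine loss while placing the other pieces together with $E_i \cap \partial^*G$ on the right, producing a system of $m$ linear inequalities in the $m$ unknowns $\H^1(\partial^* E_i \cap E_k^{(1)} \cap G^\circ)$. Summing the system and rearranging isolates the full quantity $\H^1(\partial^* E_i \cap G^\circ)$ as a bounded multiple of $\H^1(E_i^{(1)}\cap\partial^* G) + \H^1(\partial^* E_i\cap\partial^* G) + \Ceeb|A|^\beta$, with constants depending only on $h_{\min}$, $h_{\max}$, and $m$. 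The coincidence term $\H^1(\partial^* E_i \cap \partial^* G)$ is absorbed using the fact that it is carried simultaneously by $\partial^* E_i$ and $\partial^* G$, together with standard properties of reduced boundaries in the plane.

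Finally, the $\Ceeb|A|^\beta$ term is absorbed via Lemma~\ref{disgusting}, which gives $\H^1(\partial^* A) \geq |A|^{1/\eta}/\Cgr^{1/\eta}$. Since $|A| \leq \Cgr r^\eta$ and $\eta\beta \geq 1$, one obtains $|A|^\beta \leq \Cgr^\beta\, r^{\eta\beta-1}\H^1(\partial^*(E_i\cap G))$. If $\eta\beta>1$, the factor $r^{\eta\beta-1}$ vanishes with $R_3$, so choosing $R_3$ small enough lets us absorb this term into the left-hand side; if $\eta\beta=1$, the factor is a universal constant, and the absorption uses the smallness of $\Ceeb$ guaranteed by~(\ref{Cperissmall}). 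We thus obtain $\H^1(\partial^*(E_i \cap G)) \leq K\H^1(E_i\cap\partial^* G)$ with $K$ depending only on $D, h, m$. The ``in particular'' claim is immediate: if $\H^1(E_i \cap \partial^* G) = 0$, the main inequality forces $\H^1(\partial^*(E_i\cap G))=0$, and Lemma~\ref{disgusting} then yields $|E_i \cap G|=0$. The principal obstacle is the system analysis in the second paragraph above, which is where the lack of symmetry of $h$ makes this lemma substantially more delicate than its symmetric counterpart, where the mixed contributions vanish and the single competitor moving $A$ into $E_0$ closes the argument at once.
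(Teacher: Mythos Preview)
Your approach is sound and reaches the same conclusion, but it is organized differently from the paper's argument. The paper proceeds by contradiction and a \emph{dichotomy}: assuming~\eqref{effnoisl} fails, it selects a \emph{single} competitor that decreases perimeter by a definite fraction of $\H^1(\partial^* F)$. Setting $\Gamma_j=\partial^* F\cap\partial^* E_j$, if $\H^1(\Gamma_0)\geq\big(1-\tfrac{h_{\min}}{3h_{\max}}\big)\H^1(\partial^* F)$ it moves $F$ into $E_0$; otherwise, using the failure of~\eqref{effnoisl}, some colored $\Gamma_\ell$ carries a definite fraction of $\H^1(\partial^* F)$, and it moves $F$ into $E_\ell$. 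Either branch yields $P(\E')\leq P(\E)-\tfrac{h_{\min}^2}{3mh_{\max}}\H^1(\partial^* F)$, after which the isoperimetric Lemma~\ref{disgusting} and Lemma~\ref{labello} give the contradiction. Your scheme instead keeps all $m$ competitors in play and extracts the bound by linear algebra; this avoids the case split at the cost of more bookkeeping. The absorption of the $\Ceeb|A|^\beta$ term is handled identically in both arguments.

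Two points in your sketch deserve tightening. First, do not isolate the ``coincidence'' piece $\partial^* E_i\cap\partial^* G$: the part with aligned normals already lies in $\partial^* F\cap\partial^* E_j$ for some $j\neq i$, and your perimeter accounting on it is identical to that on the interior portion of $\Gamma_j$, so fold it into the $\Gamma_j$'s as the paper does rather than invoking unspecified ``standard properties''. Second, plain summing of your $m$ inequalities can fail when $h_{\max}\geq 3h_{\min}$, since the mixed coefficient $(h_{\max}-h_{\min})/2$ may exceed $h_{\min}$; the clean route is to first use the $m-1$ colored competitors (which have no cross-terms at all, since moving $F$ into a colored $E_j$ leaves every other $\Gamma_k$ unchanged) to bound each colored $\H^1(\Gamma_j)$, and then \emph{substitute} those bounds into the white-competitor inequality to control $\H^1(\Gamma_0)$. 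With these adjustments your argument goes through with constants depending only on $h_{\min},h_{\max},m,\Cgr$.
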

\begin{proof}
Let $R_3\leq R_2$ be a constant, to be precised later, take a set of finite perimeter $G$ contained in a ball $B(x,r)\subseteq \textsf{D}$ with $r<R_3$, and fix $1\leq i \leq m$. To get the thesis it is enough to prove~(\ref{effnoisl}). Indeed, if~(\ref{intint}) holds true, then $\haus^1(\partial^*(E_i\cap G))>0$, which by~(\ref{effnoisl}) gives~(\ref{intbor}).\par

Let us call $F=E_i\cap G$, and assume that $|F|>0$, since otherwise~(\ref{effnoisl}) is emptily true. We claim that, provided $K$ is large enough, if~(\ref{effnoisl}) is false then we can find a competitor, that is, a cluster $\E'$ such that
\begin{align}\label{competitor}
\E'=\E \hbox{ outside $F$}\,, && P(\E')\leq P(\E) - \frac{h_{\rm min}^2}{3mh_{\rm max}}\, \haus^1(\partial^* F)\,.
\end{align}
We shall first observe that the existence of such a cluster is impossible if $R_3$ has been taken small enough, so that the thesis will follow by proving the claim. By Lemma~\ref{disgusting}, from~(\ref{competitor}), right, we deduce
\begin{equation}\label{competitor2}
P(\E') \leq P(\E)- \frac{h_{\rm min}^2}{3mh_{\rm max}^2}\, P(F)
\leq P(\E) - \frac{h_{\rm min}^3}{3\Cgr^{1/\eta}mh_{\rm max}^2}\, |F|^{1/\eta}\,.
\end{equation}
Let us define $\eps=|\E|-|\E'|$, so that by~(\ref{competitor}), left, $|\eps|\leq 2|F|$. Applying Lemma~\ref{labello}, we get a cluster $\E''$ with $|\E''|=|\E|$ and
\[
P(\E'')\leq P(\E') + \Ceeb |\eps|^\beta \leq P(\E') + \Ceeb 2^\beta |F|^\beta\,.
\]
Putting this inequality together with~(\ref{competitor2}), by the optimality of $\E$ we find
\begin{equation}\label{aboine}
|F|^{\beta-1/\eta}\geq \frac{h_{\rm min}^3}{3\cdot 2^\beta \Ceeb \Cgr^{1/\eta} m h_{\rm max}^2}\,.
\end{equation}
We can again distinguish the case $\eta\beta=1$ and the case $\eta\beta>1$. If $\eta\beta=1$, then the above inequality is false thanks to~(\ref{Cperissmall}) if we define
\begin{equation}\label{ceeb2}
\Ceeb^2=\frac{h_{\rm min}^3}{3\cdot 2^\beta \Cgr^{1/\eta} m h_{\rm max}^2}\,,
\end{equation}
so we have already found the desired contradiction and the thesis follows simply by taking $R_3=R_2$. Instead, if $\eta\beta>1$, keeping in mind that
\[
|F|\leq |G|\leq |B(x,r)| \leq \Cgr r^\eta
\]
by the growth condition, the estimate~(\ref{aboine}) is clearly false if $r$ is small enough, so we can find some $R_3\leq R_2$ such that the desired contradiction follows also in this case. Summarizing, the thesis follows if we prove the existence of a cluster $\E'$ satisfying~(\ref{competitor}) under the assumption that~(\ref{effnoisl}) does not hold and with $K$ large enough.\par

For every $0\leq j \leq m,\, j\neq i$, we define $\Gamma_j := \partial^* F \cap \partial^* E_j$. Since
\[
\partial^* G\cap E_i\subseteq \partial^* F\subseteq \partial^* E_i \cup \big(\partial^* G\cap E_i\big)\,,
\]
and $\haus^1$-a.e.\ point of $\partial^* E_i$ belongs also to $\partial^* E_j$ for exactly one $0\leq j\leq m,\, j\neq i$, we have
\begin{equation}\label{subdiv}
\partial^* F =\big(\partial^* G \cap E_i\big)\,\cup\, \bigcup_{\doppio{j\in \{0,\,1,\,\dots\,,\,m\}}{j\neq i}}\ \Gamma_j
\end{equation}
up to negligible sets, and the $m+1$ sets are essentially disjoint. Consider now the inequality
\begin{equation}\label{maybeyes}
\haus^1(\Gamma_0)\geq \bigg(1-\frac{h_{\rm min}}{3h_{\rm max}}\bigg)\, \haus^1(\partial^* F)\,.
\end{equation}
The definition of the competitor $\E'$ will depend on whether or not this inequality holds true. First of all, we assume that the inequality holds. In this case, we let $\E'$ be the cluster such that $E_i'=E_i\setminus F$ and $E_j'=E_j$ for every $1\leq j\leq m,\, j\neq i$, so that~(\ref{competitor}), left, is true. In order to compare $P(\E)$ and $P(\E')$, we notice that $\partial^* \E' = \big(\partial^* \E \setminus \Gamma_0\big) \cup \big(\partial^* G\cap E_i\big)$, and also that, for each $1\leq j\leq m,\, j\neq i$, the set $\Gamma_j$ is contained both in $\partial^*\E$ and in $\partial^* \E'$, but its contribution to the perimeter is different. In fact, in $\partial^*\E$ the set $\Gamma_j$ is a common boundary between two ``colored'' sets, namely, $E_i$ and $E_j$, while in $\partial^*\E'$ it is common boundary between a colored and a white set, that is, $E'_j$ and $E'_0$. Consequently, making use of~(\ref{maybeyes}), we can estimate
\[\begin{split}
P(\E')- P(\E)&\leq - h_{\rm min} \haus^1(\Gamma_0) + h_{\rm max} \haus^1(\partial^* F\setminus \Gamma_0)\\
&= h_{\rm max}\haus^1(\partial^* F) - \big(h_{\rm min}+h_{\rm max}\big) \haus^1(\Gamma_0)\\
&\leq \bigg(h_{\rm max} - (h_{\rm min}+h_{\rm max})\bigg(1-\frac{h_{\rm min}}{3h_{\rm max}}\bigg)\bigg)\haus^1(\partial^* F)\\
&= h_{\rm min}\bigg(-\frac 23 + \frac{h_{\rm min}}{3h_{\rm max}}\bigg)\haus^1(\partial^* F)
\leq - \frac{h_{\rm min}}3\, \haus^1(\partial^* F)\\
&\leq - \frac{h_{\rm min}^2}{3mh_{\rm max}}\, \haus^1(\partial^* F)\,,
\end{split}\]
which is~(\ref{competitor}), right. We have then found the searched competitor if~(\ref{maybeyes}) holds.\par

Let us then finally assume that~(\ref{maybeyes}) is false. As a consequence, using~(\ref{subdiv}) and the fact that~(\ref{effnoisl}) is false, we have
\[
\sum_{{\doppio{j\in \{1,\,\dots\,,\,m\}}{j\neq i}}} \haus^1(\Gamma_j) = \haus^1(\partial^* F) - \haus^1(\partial^* G\cap E_i) - \haus^1(\Gamma_0)
> \bigg(\frac{h_{\rm min}}{3h_{\rm max}}- \frac 1K\bigg)\, \haus^1(\partial^* F)\,.
\]
Calling then $\ell={\rm argmax} \big\{\haus^1(\Gamma_j),\, 1\leq j\leq m,\, j\neq i\big\}$, we have
\begin{equation}\label{goodell}
\haus^1(\Gamma_\ell)\geq \frac 1 {m-1}\, \sum_{{\doppio{j\in \{1,\,\dots\,,\,m\}}{j\neq i}}} \haus^1(\Gamma_j)
\geq \frac{h_{\rm min}}{3(m-1/2)h_{\rm max}}\, \haus^1(\partial^* F)\,,
\end{equation}
where the last inequality is true if $K$ is large enough. In this case, we define $\E'$ the cluster such that $E_i'=E_i\setminus F$, $E_\ell'=E_\ell\cup F$, and $E_j'=E_j$ for every $j \in \{1,\, \dots\,,\, m\}\setminus \{i,\, \ell\}$. Notice that~(\ref{competitor}), left, holds true, and this time
\[
\partial^* \E' = \big(\partial^* \E \setminus \Gamma_\ell\big) \cup \big(\partial^* G\cap E_i\big)\,.
\]
Moreover, for every $1\leq j\leq m,\, j\notin \{i,\,\ell\}$ the contribution of $\Gamma_j$ to $P(\E)$ and to $P(\E')$ is the same (here we are using that $i\neq 0$). As a consequence, we have
\[
P(\E') \leq P(\E) - h_{\rm min} \haus^1(\Gamma_\ell) + h_{\rm max} \haus^1(\partial^* G\cap E_i) \leq P(\E) - \frac{h_{\rm min}^2}{3mh_{\rm max}}\haus^1(\partial^* F)\,,
\]
where the last inequality is true by~(\ref{goodell}), by the fact that~(\ref{effnoisl}) is false, and up to possibly increase the value of $K$. We have then proved~(\ref{competitor}), right, and the proof is completed.
\end{proof}

The same result that we have just proved is also true for the case $i=0$ (and we call this case ``no-lakes'' instead of ``no-island''), but our proof above does not work. We now show the no-lakes result in a simplified case, namely, for ``holes'' whose boundary entirely belongs to a same $\partial^* E_\ell$, with $1\leq \ell\leq m$. Later on, we will show it in full generality.

\begin{lem}[No-lakes, part 1]\label{nolakes}
Let $B(x,r)\subseteq \textsf{D}$ be a ball with $r< R_3$, and let $1\leq \ell\leq m$. There is no set $F\subseteq B(x,r)\setminus E_\ell$, thus in particular no set $F\subseteq B(x,r)\cap E_0$, with $|F|>0$ and $\partial^* F \subseteq \partial^* E_\ell$.
\end{lem}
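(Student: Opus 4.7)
The plan is to argue by contradiction, constructing a competitor that simply ``fills in'' the forbidden region $F$ with the chamber $E_\ell$ and showing this decreases the perimeter too much to be compensated by the volume correction.

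Assume such an $F$ exists and define the competitor $\E'$ by setting $E'_\ell = E_\ell \cup F$ and $E'_j = E_j\setminus F$ for every $1\le j\le m$ with $j\ne \ell$. Since $F\subseteq B(x,r)$, the cluster $\E'$ coincides with $\E$ outside $B(x,r)$, and the volume change $\eps=|\E|-|\E'|$ satisfies $|\eps|\le 2|F|\le 2\Cgr r^\eta<\bar\eps$ by the growth condition and the choice $r<R_3\le R_2$.

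The key perimeter computation is the following. Because $F\cap E_\ell=\emptyset$ and $\partial^* F\subseteq\partial^* E_\ell$, the set $\Sigma:=\partial^* F$ appears as a part of $\partial^*\E$ separating $E_\ell$ from the ``$F$-side''. Splitting $\Sigma=\Sigma_0\cup\Sigma'$, where $\Sigma_0$ is the part where $E_0$ has density $1$ on the $F$-side and $\Sigma'$ is the part where some coloured $E_j$ (with $j\ne 0,\ell$) has density $1$ on the $F$-side, a direct application of~(\ref{PbE}) gives that $\Sigma_0$ contributes $h(x,\nu)$ and $\Sigma'$ contributes $\frac12(h(x,\nu)+h(x,-\nu))$ to $P(\E)$, where $\nu$ is the outer normal to $E_\ell$. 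In $\E'$, on both $\Sigma_0$ and $\Sigma'$ the set $E'_\ell=E_\ell\cup F$ has density $1$ on both sides, so $\Sigma$ is entirely interior to $E'_\ell$ and contributes nothing to $P(\E')$. All the possible internal interfaces of $F$ (i.e., pieces of $\partial^* E_j\cap\partial^* E_k\cap F$ for $j,k\ne\ell$) also disappear. Hence
\[
P(\E)-P(\E')\ \ge\ h_{\min}\,\H^1(\partial^* F).
\]

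To restore the volume, apply Lemma~\ref{labello} to obtain a cluster $\E''$ with $|\E''|=|\E|$ and $P(\E'')\le P(\E')+\Ceeb(2|F|)^\beta$. Combining with the isoperimetric estimate of Lemma~\ref{disgusting}, $\H^1(\partial^* F)\ge \frac{h_{\min}}{h_{\max}\Cgr^{1/\eta}}|F|^{1/\eta}$, and using the minimality $P(\E)\le P(\E'')$, we obtain
\[
\frac{h_{\min}^2}{h_{\max}\Cgr^{1/\eta}}\le 2^\beta\,\Ceeb\,|F|^{\beta-1/\eta}.
\]
If $\eta\beta>1$ the right-hand side vanishes with $|F|\le\Cgr r^\eta$ as $r\to 0$, so the inequality is false once $R_3$ is chosen sufficiently small (tightening the value $R_3\le R_2$ already fixed in Lemma~\ref{noisland} if necessary). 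If $\eta\beta=1$ the exponent vanishes and the inequality reads $\Ceeb\ge \Ceeb^3:=\tfrac{h_{\min}^2}{2^\beta h_{\max}\Cgr^{1/\eta}}$, which contradicts the standing assumption~(\ref{Cperissmall}). In both cases we reach a contradiction, so no such $F$ can exist.

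The only delicate step is the perimeter bookkeeping on $\Sigma$, because the asymmetric density forces us to distinguish $\Sigma_0$ from $\Sigma'$ and to track which of the three summands in~(\ref{PbE}) each piece contributes to; once that is done correctly, the remainder is a standard isoperimetric competitor argument of exactly the same flavour as Lemma~\ref{noisland}.
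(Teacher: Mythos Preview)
Your proof is correct and follows essentially the same approach as the paper: build the competitor $\E'$ by absorbing $F$ into $E_\ell$, obtain $P(\E')\le P(\E)-h_{\min}\H^1(\partial^* F)$, and derive a contradiction via Lemma~\ref{labello} and the isoperimetric inequality. The paper's version is terser --- it simply observes that this perimeter inequality is stronger than~(\ref{competitor}) and defers to the conclusion of Lemma~\ref{noisland}, so no new constant and no tightening of $R_3$ are needed (in particular your ``$\Ceeb^3$'' is redundant, and the name already clashes with the paper's~(\ref{ceeb3})).
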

\begin{proof}
We argue as in Lemma~\ref{noisland}, but the situation is now much simpler. Assume the existence of $x,\,r,\,\ell$ and $F$ as in the claim, and define $\E'$ the cluster such that $E_\ell'=E_\ell\cup F$, and $E_j'=E_j\setminus F$ for every $1\leq j\leq m,\, j\neq \ell$. By assumption, $\partial^*\E'=\partial^*\E\setminus (\partial^*F\cup F)$, and in particular
\[
P(\E')\leq P(\E) - h_{\rm min} \haus^1(\partial^* F)\,.
\]
Since this inequality is stronger than~(\ref{competitor}), we conclude exactly as in Lemma~\ref{noisland}.
\end{proof}

As a consequence of the last two results, we can observe a first regularity property for the sets $E_i,\, 1\leq i\leq m$, mild but useful.

\begin{lem}\label{lem1reg}
For every ball $B(x,r)\subseteq \textsf{D}$ with $r<R_3$ and for every $1\leq i\leq m$, the set $E_i\cap B(x,r)$ is an open set (taking the set of points with density $1$ as representative). Moreover, for every connected component $F$ of $E_i\cap B(x,r)$, there exists an injective curve $\gamma:\S^1\to\R^2$ of finite length such that $\partial^* F=\partial F=\gamma(\S^1)$ up to $\haus^1$-negligible subsets, and $\haus^1\big(\partial F\cap \partial B(x,r)\big)>0$.
\end{lem}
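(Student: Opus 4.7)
My plan is to prove the three assertions in turn: openness of $E_i \cap B(x,r)$, then Jordan-curve structure of each component's boundary, and then that the component touches $\partial B(x,r)$.

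The first step is to upgrade the minimality of $\E$ to an almost-minimality property, which will give openness and the coincidence of topological and reduced boundaries via classical density estimates. Concretely, given any competitor $\E'$ agreeing with $\E$ outside a ball $B(y,\rho) \subseteq B(x,r)$, Lemma~\ref{labello} produces a cluster $\E''$ of the same volume as $\E$ with $P(\E'') \leq P(\E') + \Ceeb(2\Cgr\rho^\eta)^\beta$; by minimality this yields $P(\E;B(y,\rho)) \leq P(\E';B(y,\rho)) + C\rho^{\eta\beta}$ with $\eta\beta \geq 1$, which is exactly a Tamanini-type quasi-minimality condition. Standard density estimates (see e.g.\ \cite{Mag}) then apply uniformly to each $E_i$: the density-$1$ representative coincides with the measure-theoretic interior, which is open, and $\partial E_i = \overline{\partial^* E_i}$ with $\H^1(\partial E_i \setminus \partial^* E_i) = 0$. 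This proves the openness claim and lets me work with topological components.

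Second, I would show that each component $F$ of $E_i \cap B(x,r)$ touches $\partial B(x,r)$ in positive $\H^1$-measure via Lemma~\ref{noisland}. Suppose by contradiction $\overline F \comp B(x,r)$. By Vol'pert's Theorem~\ref{volpert} and the perimeter bound of Lemma~\ref{13/2}, for almost every $r' \in (\mathrm{dist}(F,\partial B(x,r))/2 + \mathrm{diam}(F), r)$ the set $\partial^* E_i \cap \partial B(x,r')$ is finite, so $E_i \cap \partial B(x,r')$ consists of finitely many arcs. Choosing $r'$ so that $F \comp B(x,r')$, I can enlarge $F$ to an open topological disk $G \subseteq B(x,r')$ whose boundary avoids all the finitely many other components of $E_i \cap B(x,r')$ meeting $\partial B(x,r')$ (possible because those components are finite near the sphere, while $F$ is compactly inside). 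Then $E_i \cap G = F$ has positive measure while $\H^1(E_i \cap \partial^* G) = 0$, contradicting Lemma~\ref{noisland}. Hence $\H^1(\partial F \cap \partial B(x,r)) > 0$.

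Third, I would show $F$ has no holes and thus its boundary is a single Jordan curve. If $H$ is a bounded connected component of $\R^2 \setminus \overline F$ that is compactly contained in $B(x,r)$, then its topological boundary lies in $\partial F$, which by Stage~1 is contained (up to $\H^1$-null sets) in $\partial^* E_i$. Then Lemma~\ref{nolakes} with $\ell = i$ forces $|H| = 0$, so $F$ has no such interior holes. Combined with Stage~2 and finite perimeter (Lemma~\ref{13/2}), $F$ is an open, connected, simply connected subset of $B(x,r)$ whose topological boundary coincides $\H^1$-almost everywhere with the reduced boundary and has finite $\H^1$-measure. By the standard representation theorem for planar sets of finite perimeter with this topology (see e.g.\ \cite{AFP}), $\partial F$ admits an injective Lipschitz parametrization $\gamma \colon \S^1 \to \R^2$ with $\gamma(\S^1) = \partial F$ up to $\H^1$-null sets.

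The main technical difficulty I expect is the book-keeping in Stage~2: the connected components of $E_i \cap B(x,r)$ may a priori accumulate, and isolating a neighborhood $G$ of $F$ whose boundary avoids all other components requires Vol'pert's theorem on a slightly smaller concentric ball so as to reduce to finitely many components reaching that smaller sphere. This is also where one uses, implicitly, that $\partial^* \E$ has locally finite $\H^1$-measure via Lemma~\ref{13/2}.
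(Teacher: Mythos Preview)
Your outline has the right three ingredients, but two of the stages contain genuine gaps that are precisely the points where the asymmetric anisotropic setting bites.

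\textbf{Stage~1.} The deduction ``cluster almost-minimality $\Rightarrow$ standard density estimates for each $E_i$'' is where the main work hides. In the symmetric case this is routine, but here $h(x,\nu)\neq h(x,-\nu)$, so when you modify $E_i$ inside a small ball (say, remove a piece and reassign it to some $E_j$ or to $E_0$), the perimeter contribution of the untouched interfaces $\partial^*E_i\cap\partial^*E_k$ changes depending on whether the new neighbour is coloured or white. The inequality $P(\E;B)\leq P(\E';B)+C\rho^{\eta\beta}$ therefore does not immediately yield $\Pe(E_i;B)\leq C_{qm}\Pe(H;B)$ for an arbitrary competitor $H$ of $E_i$. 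The paper does not try to get density estimates for $E_i$ directly: instead it proves Euclidean quasi-minimality of the \emph{component} $F$, and the proof (see the argument around~(\ref{quasimin})--(\ref{estG1})) uses the quantitative no-island inequality~(\ref{effnoisl}) of Lemma~\ref{noisland} as the essential tool to control $G_1=F\setminus H$, plus a separate competitor argument for $G_2=H\setminus F$. That no-island inequality already encodes the case analysis (large $\Gamma_0$ vs.\ large $\Gamma_\ell$) needed to cope with asymmetry; you cannot simply cite \cite{Mag} here.

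\textbf{Stage~3.} A topological hole $H$ of $F$ need not lie in $B(x,r)\setminus E_i$: it may contain pieces of \emph{other} connected components of $E_i\cap B(x,r)$, so Lemma~\ref{nolakes} does not apply to $H$ directly. The paper fixes this by passing to $V=U\setminus E_i$, checking that $|V|>0$ via~(\ref{eiuuno}), and showing $\partial^*V\subseteq\partial^*E_i$ before invoking Lemma~\ref{nolakes}.

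\textbf{Stage~2.} Your construction of a disk $G$ separating $F$ from the other components is both delicate and unnecessary. The paper simply takes $G=F$ in~(\ref{effnoisl}): since $\partial^*F\subseteq\partial^*E_i\cup\partial B(x,r)$ and $\H^1(E_i\cap\partial^*E_i)=0$, one gets $0<\H^1(E_i\cap\partial^*F)\leq\H^1(\partial^*F\cap\partial B(x,r))$ in one line.
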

It is important to notice that, in the above result, the connectedness of $F$ should be in principle meant in the measure theoretic sense, see the Appendix. However, an immediate consequence of the result itself is that the measure theoretical connected components are actually connected in the topological sense.
\begin{proof}[Proof of Lemma~\ref{lem1reg}]%\proofof{Lemma~\ref{lem1reg}}
We can assume that $|E_i\cap B(x,r)|>0$, since otherwise the result is emptily true. Let then $F$ be either the whole set $E_i\cap B(x,r)$ or one of its connected components. Since $F\subseteq E_i$, then~(\ref{intint}) holds with $G=F$, hence also~(\ref{intbor}) is true, i.e., $\haus^1(E_i \cap \partial^* F)>0$. Observing that $\partial^* F\subseteq \partial^* E_i \cup \partial B(x,r)$ and $\haus^1(E_i\cap\partial^* E_i)=0$, we deduce that
\[
0<\haus^1(E_i \cap\partial^* F)\leq\haus^1(\partial^* F\cap \partial B(x,r))\,.
\]
Therefore, keeping in mind Theorem~\ref{qmpor}, Lemma~\ref{porope} and Lemma~\ref{noislPCC}, all we have to do is to check that $F$ is quasi-minimal and has no holes (in the sense of Definition~\ref{defholes}).\par 

We start proving that $F$ has no holes. By contradiction, assume the existence of $U\subseteq\R^2\setminus F$ with $\haus^2(U)>0$ and such that $\partial^* F =\partial^* U \cup \partial^*(F\cup U)$, which implies that $U\subseteq B(x,r)$. Up to $\haus^1$-negligible subsets, all points of $\partial^* U$ have density $1/2$ with respect to $U$, and also with respect to $F$ since $\partial^* U \subseteq \partial^* F$. Since $U\cap F=\emptyset$, this implies that $\haus^1$-a.e.\ point of $\partial^* U$ has density $1$ with respect to $U\cup F$. And since $U\cup F\subseteq B(x,r)$, we deduce that $\partial^* U\cap\partial B(x,r)$ is $\haus^1$-negligible, which recalling that $\partial^* U \subseteq \partial^* F \subseteq \partial^* E_i \cup \partial B(x,r)$ gives
\begin{equation}\label{puinpei}
\partial^* U\subseteq \partial^* E_i \qquad \haus^1\hbox{-a.e.}\,.
\end{equation}
Observe now that $U$ does not intersect $F$, but it could still intersect $E_i$. Nevertheless,
\begin{equation}\label{eiuuno}
\haus^1\big(\partial^*(E_i\cap U) \cap \partial^* U \big)=0\,.
\end{equation}
Indeed, up to $\haus^1$-negligible subsets, points of $\partial^*(E_i \cap U)$ have density $1/2$ with respect to $E_i\cap U$, and points of $\partial^* U\subseteq \partial^* F$ have density $1/2$ with respect to $F\subseteq E_i\setminus U$, so $\haus^1$-a.e.\ point of $\partial^*(E_i\cap U) \cap \partial^* U$ has density $1$ with respect to $E_i$, and by~(\ref{puinpei}) this gives~(\ref{eiuuno}), which in particular implies that the set $V=U\setminus E_i$ satisfies $|V|>0$. Summarizing, $V$ is a subset of $B(x,r)\setminus E_i$, and by construction and~(\ref{puinpei}) we have $\partial^* V\subseteq \partial^* U \cup \partial^* E_i\subseteq \partial^* E_i$. The existence of such a set $V$ is excluded by Lemma~\ref{nolakes}, thus we have proved that $F$ has no holes.\par

Hence, to conclude the proof, we only have to deal with the quasi-minimality. Since $F\subseteq B(x,r)$, it is enough to take a ball $B(z,\rho)$ intersecting $F$ and with $\rho<R_3$. To prove the quasi-minimality we have to find a constant $C_{qm}$, only depending on $\textsf{D},\, h$ and $\E$, such that for every set $H$ with $F\Delta H \comp B(z,\rho)$ one has
\begin{equation}\label{quasimin}
\haus^1\big(\partial^* F \cap B(z,\rho)\big)\leq C_{qm} \haus^1\big(\partial^* H \cap B(z,\rho)\big)\,.
\end{equation}
Let us call $G_1=F\setminus H$ and $G_2=H\setminus F$, and notice that $G_1,\,G_2 \comp B(z,\rho)$. We clearly have
\begin{equation}\label{subdiv2}
\partial^* F \cap B(z,\rho) \subseteq \partial^* G_1 \cup \partial^* G_2\cup \big(\partial^* H\cap B(z,\rho)\big)\,.
\end{equation}
Applying Lemma~\ref{noisland} to the set $G_1$ we get
\begin{equation}\label{estG1}
\haus^1(\partial^* G_1) = \haus^1\big(\partial ^*(G_1 \cap E_i)\big) \leq K \haus^1\big(E_i \cap \partial^* G_1\big)\leq K \haus^1\big(\partial^* H \cap B(z,\rho)\big)
\end{equation}
where the first equality holds since $G_1 = G_1\cap E_i$, and the last inequality is true since $\partial^* G_1 \subseteq \big(\partial^* E_i \cup \partial^* H \big)\cap B(z,\rho)$, and then $\haus^1$-a.e. point of $\partial^* G_1\cap E_i$ cannot be in $\partial^* E_i$, so it must be in $\partial^* H\cap B(z,\rho)$.\par

Let us now pass to consider $G_2$. Let us call $\widetilde \E$ the cluster such that $\widetilde E_i=E_i \cup G_2$ and $\widetilde E_j=E_j\setminus G_2$ for every $j\neq i$. As already observed while proving Lemma~\ref{nolakes}, we get a contradiction with the same argument of Lemma~\ref{noisland} if
\[
P(\widetilde\E) \leq P(\E) - \frac{h_{\rm min}}2\, \haus^1(\partial^* G_2)\,.
\]
In fact, in the proof of Lemma~\ref{nolakes} we were using the same inequality with $h_{\rm min}$ in place of $h_{\rm min}/2$, but both work since both inequalities are stronger than~(\ref{competitor}). Therefore, we know that
\begin{equation}\label{wkt}
P(\widetilde\E) > P(\E) -\frac{h_{\rm min}}2\, \haus^1(\partial^* G_2)\,.
\end{equation}
Let us now observe that
\begin{equation}\label{qs2}
\big(\partial^* G_2 \cap \partial^* F\big) \cap \partial^* \widetilde \E=\emptyset\,.
\end{equation}
Indeed, $\haus^1$-a.e.\ $y\in \partial^* G_2\cap\partial^* F$ has density $1/2$ with respect to $F$. Moreover, it has density $1/2$ with respect to $G_2$, which does not intersect $F$, so density $1$ with respect to $F\cup G_2\subseteq \widetilde E_i$. Thus, $y\notin \partial^*\widetilde\E$ and~(\ref{qs2}) is established. Moreover, $\partial^*\widetilde\E\setminus\partial^*\E\subseteq \partial^* G_2$, which by~(\ref{qs2}) becomes
\[
\partial^*\widetilde\E\setminus\partial^*\E\subseteq \partial^* G_2\setminus \partial^*F\,.
\]
By~(\ref{wkt}) we obtain the estimate
\[\begin{split}
\frac{h_{\rm min}}2\, \haus^1(\partial^* G_2)&> P(\E)-P(\widetilde\E)
\geq h_{\rm min} \haus^1\big(\partial^* G_2 \cap \partial^* F\big)-h_{\rm max} \haus^1\big(\partial^* G_2 \setminus \partial^* F\big)\\
&=h_{\rm min} \haus^1\big(\partial^* G_2\big)-(h_{\rm min}+h_{\rm max}) \haus^1\big(\partial^* G_2 \setminus \partial^* F\big) \\
&\geq h_{\rm min} \haus^1\big(\partial^* G_2\big)-(h_{\rm min}+h_{\rm max}) \haus^1\big(\partial^* H \cap B(z,\rho)\big) \,,
\end{split}\]
which can be rewritten as
\[
\haus^1(\partial^* G_2)< \frac{2(h_{\rm min}+h_{\rm max})}{h_{\rm min}}\, \haus^1\big(\partial^* H \cap B(z,\rho)\big)\,.
\]
Putting this inequality together with~(\ref{estG1}), we obtain~(\ref{quasimin}) thanks to~(\ref{subdiv2}).
\end{proof}

Notice that, as a consequence of the above regularity result, for each ball $B(x,r)\subseteq \textsf{D}$ with $r< R_3$, and each $1\leq i \leq m$, the boundary of $E_i\cap B(x,r)$ is done by a countable union of closed, injective curves. Since all these curves have to reach $\partial B(x,r)$, they are actually finitely many if Vol'pert Theorem~\ref{volpert} holds for $B(x,r)$ and $\haus^0(\partial^* \E \cap \partial B(x,r))<+\infty$, which is true for almost each $r>0$.\par

Observe now that, when two sets of the minimal cluster have some common boundary, then two of these curves have some intersection. We show now that these intersections between different curves behave not too crazily.

\begin{lem}\label{noncrazyint}
There exists $R_4\leq R_3$ such that the following holds. Let $B(x,r)\subseteq \textsf{D}$ be a ball with $r< R_4$, let $\gamma_1,\,\gamma_2:\S^1\to \overline{B(x,r)}$ be two curves as in Lemma~\ref{lem1reg}, not necessarily different, and let $\tau_1,\,\tau_2:[0,1]\to B(x,r)$ be two injective subpaths of $\gamma_1$ and $\gamma_2$ such that
\begin{align*}
\tau_1(0)=\tau_2(1)\,, && \tau_1(1)=\tau_2(0)\,.
\end{align*}
Then the paths $\tau_1$ and $\tau_2$ coincide, that is, $\tau_1((0,1))=\tau_2((0,1))$. More in general, if
\begin{align*}
\tau_1\big((0,1)\big)\cap \tau_2\big((0,1)\big) = \emptyset\,, && \tau_1(0)=\tau_2(1)\,,
\end{align*}
then
\begin{equation}\label{thesisnci}
\big|\tau_1(0)-\tau_1(1)\big| \leq C_1 |\tau_1(1)-\tau_2(0)|
\end{equation}
for some constant $C_1> 1$ depending only on $\textsf{D},\,h$ and $\E$.
\end{lem}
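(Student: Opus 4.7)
The plan is to close $\tau_1$ and $\tau_2$ into a loop, apply Corollary~\ref{helpnolakes} to replace them by a shorter ``slit'', and use the strict convexity of $h$ together with the minimality of $\E$ to obtain the bound. Set $P = \tau_1(0) = \tau_2(1)$, $Q = \tau_1(1)$, $R = \tau_2(0)$, and work with the frozen density $\overline h(\nu) = h(x,\nu)$ and the associated functionals $\h$, $\tilde\h$, $\overline P$ as in Section~\ref{sect90}. By continuity of $h$ in the first variable and Lemma~\ref{13/2}, the true perimeter inside $B(x,r)$ and its frozen counterpart differ by at most $C\omega(r)\,r$. Since $\tau_1((0,1))$ and $\tau_2((0,1))$ are disjoint and both contained in $B(x,r)$, the concatenation of $\tau_2$, the segment from $R$ to $Q$, and $\tau_1$ is a closed curve enclosing a region $\Omega \subseteq B(x,r)$ with $|\Omega| \leq \Cgr r^\eta$.

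Now apply Corollary~\ref{helpnolakes} (with $\overline h$) to $\tau_1$ and to the concatenation of $\tau_2$ with the segment $RQ$, both joining $P$ to $Q$ inside $\overline\Omega$, obtaining a path $\tau$ from $P$ to $Q$ inside $\overline\Omega$ with $\len(\tau)+\len(\hat\tau)\leq \len(\tau_1)+\len(\tau_2)+2\tilde\h(QR)$. Construct a competitor $\E'$ that coincides with $\E$ outside a neighborhood of $\overline\Omega$ and, inside $\Omega$, consists of a single chamber---the one adjacent to $\tau_1$ on its outer side---so that the only remaining boundary inside $\Omega$ is the slit $\tau \cup \hat\tau$ together with a small piece of $\overline P$-length $O(|R-Q|)$ near the closing segment. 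Restoring volumes through Lemma~\ref{labello} produces a cluster $\E''$ with $|\E''| = |\E|$ and $P(\E'') \leq P(\E') + \Ceeb(2\Cgr r^\eta)^\beta$. Comparing $P(\E) \leq P(\E'')$ and converting back to the frozen density yields
\[
\len(\tau_1) + \len(\tau_2) \leq \len(\tau) + \len(\hat\tau) + C|R-Q| + \Ceeb(2\Cgr r^\eta)^\beta + C\omega(r)\,r\,.
\]

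The decisive step is to upgrade Corollary~\ref{helpnolakes} to a quantitative gain $\len(\tau_1)+\len(\tau_2)-\len(\tau)-\len(\hat\tau)\geq c\,|P-Q|$ when $|R-Q|\ll |P-Q|$: this is a first-order computation in the spirit of Step~III of Proposition~\ref{90prop}, since the slit $\tau\cup\hat\tau$ amounts to a round trip between $P$ and $Q$ while $\tau_1\cup\tau_2$ performs a detour, and strict convexity together with the uniform roundedness (cf.\ Remark~\ref{fixdelta}) forces a linear gain in the chord length. Combined with the previous display this gives $c\,|P-Q|\leq C|R-Q|+\Ceeb(2\Cgr r^\eta)^\beta+C\omega(r)\,r$. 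For $\eta\beta>1$ and small $R_4$ the error terms are dominated, and for $\eta\beta=1$ the $\Ceeb$-term is linear in $r$ and can be absorbed on the left by taking $\Ceeb$ smaller than a new constant $\Ceeb^3$ appearing in~(\ref{Cperissmall}); this yields~(\ref{thesisnci}) with some $C_1>1$ depending only on $D,\, h$ and $\E$. The first assertion of the lemma corresponds to the degenerate case $Q=R$: the quantitative bound forces $P=Q$, so any non-coincident pair with matching reversed endpoints would enclose a region of positive area and contradict the strict gain above, hence $\tau_1((0,1)) = \tau_2((0,1))$. The main obstacle is precisely this quantitative upgrade of Corollary~\ref{helpnolakes}, which requires extracting a positive linear lower bound from strict convexity uniformly over the position of the ball in $D$.
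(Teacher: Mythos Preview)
Your argument has a genuine gap at the ``decisive step'': the quantitative upgrade of Corollary~\ref{helpnolakes} that you claim, namely
\[
\len(\tau_1)+\len(\tau_2)-\len(\tau)-\len(\hat\tau)\;\geq\; c\,|P-Q|,
\]
is simply false in general, and nothing in your sketch justifies it for the curves at hand. Take for instance $P=(0,0)$, $Q=(1,0)$, $R=(1,d)$ with $d$ small, and let $\tau_1,\tau_2$ be the straight segments $PQ$ and $RP$. These have disjoint interiors, they enclose the triangle $PQR$, and the best slit $\tau$ is the segment $PQ$ itself. One computes $\len(\tau_1)+\len(\tau_2)-\len(\tau)-\len(\hat\tau)=\h((-1,-d))-\h((-1,0))=O(d)$, not $\geq c$. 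Strict convexity and uniform roundedness control the \emph{excess} of a bent path over its chord, which is quadratic in the transversal deviation; they do not manufacture a gain that scales with the chord length $|P-Q|$ out of two nearly straight, nearly parallel arcs. Your appeal to Step~III of Proposition~\ref{90prop} is misplaced: there the gain comes from a first-order expansion at a \emph{corner}, whereas here the two paths can meet at $P$ tangentially.

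The paper's mechanism is entirely different and does not attempt to sharpen Corollary~\ref{helpnolakes}. The gain comes from the \emph{colour structure} of the boundary. One first uses the no-islands estimate~(\ref{effnoisl}) to show that, except on a set $\Gamma\subseteq\tau_1\cup\tau_2$ with $\H^1(\Gamma)\leq K d$, the arcs $\tau_1$ and $\tau_2$ border $E_0$ on the $G$-side (this is~(\ref{Gammanotsolong}) and the reduction~(\ref{notinside})). The competitor then fills the two subregions $G_1,G_2$ with the two colours $E_j,E_k$, so that the remaining interface $\tau$ separates two \emph{coloured} chambers and therefore contributes only $\tfrac12\big(\len(\tau)+\len(\hat\tau)\big)$ to the perimeter, whereas $\tau_1\cup\tau_2$ in $\E$ contributed the full $\len(\tau_1)+\len(\tau_2)$. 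Together with Corollary~\ref{helpnolakes} (used only as an inequality, not quantitatively) this yields the linear saving $\approx\tfrac14\,h_{\rm min}\,\H^1(\tau_1\cup\tau_2)$ in~(\ref{oneesti}). Your competitor, which fills $\Omega$ with a \emph{single} chamber, cannot see this halving at all; and your sketch never invokes Lemma~\ref{noisland}, so you have no control on what lies inside $G$.

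There is a second, independent gap in the error terms. You bound $|\eps|$ by $2\Cgr r^\eta$, which for $\eta\beta=1$ gives an error linear in $r$; but your claimed gain is linear in $|P-Q|$, which may be much smaller than $r$. The paper instead bounds $|G|$ by the Euclidean isoperimetric inequality in terms of $\H^1(\tau_1\cup\tau_2)$, obtaining $|\eps|^\beta\lesssim \H^1(\tau_1\cup\tau_2)^{2\beta}$; since $\eta\leq 2$ forces $\beta\geq 1/2$, this is then absorbed into the linear gain $\tfrac14 h_{\rm min}\H^1(\tau_1\cup\tau_2)$, yielding~(\ref{sinceopt}) and hence~(\ref{thesisnci}).
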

Before giving the proof of this result, we briefly explain its meaning, also with the aid of Figure~\ref{Fignoncrazy}.
\begin{figure}[htbp]
\begin{tikzpicture}[>=>>>,smooth cycle]
\filldraw[fill=red!35!white, draw=black, line width=.75pt] plot [tension=1] coordinates {(-2,0) (-3.5,1) (-2.5,2.5) (-0.5,2.5) (3,2.5) (3.5,0) (1.5,.5)};
\filldraw[fill=blue!35!white, draw=black, line width=.75pt] plot [tension=1] coordinates {(-2,0) (-3.5,-2) (-2.5,-3) (3,-3) (3.5,-3) (2.5,-1.5) (1.5,0.3) (0,-2)};
\filldraw[fill=white, draw=white] (-4,0) -- (-3,0) arc(180:0:3) -- (4,0) -- (4,3.5) -- (-4,3.5) -- cycle;
\filldraw[fill=white, draw=white] (-4,0) -- (-3,0) arc(-180:-0:3) -- (4,0) -- (4,-3.5) -- (-4,-3.5) -- cycle;
\fill (-2,0) circle (2pt);
\fill (1.5,.5) circle (2pt);
\fill (1.5,0.3) circle (2pt);
\draw[line width=1pt] (0,0) circle (3);
\draw (0,1.5) node {$E_j$};
\draw (-1.5,-1.5) node {$E_k$};
\draw (-2,0) node[anchor=south] {$P$};
\draw (1.5,0.6) node[anchor=south] {$Q_1$};
\draw (1.5,0.2) node[anchor=north] {$Q_2$};
\draw[->] (-.2,0.4) -- (0.45,0.55);
\draw (-.5,0.2) node[anchor=south] {$\tau_1$};
\draw[->] (.55,-1.2) -- (0.25,-1.8);
\draw (0,-1.9) node[anchor=south] {$\tau_2$};
\end{tikzpicture}
\caption{The curves $\tau_1$ and $\tau_2$ and the points $P,\, Q_1$ and $Q_2$ in Lemma~\ref{noncrazyint}.}\label{Fignoncrazy}
\end{figure}
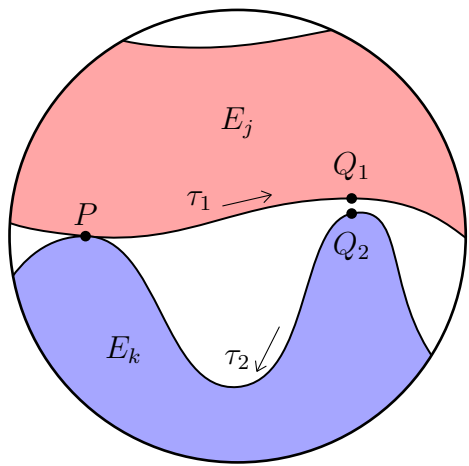
Let $\gamma_1$ and $\gamma_2$ be two curves as in Lemma~\ref{lem1reg}, and let us select two subpaths, $\tau_1$ of $\gamma_1$ and $\tau_2$ of $\gamma_2$, both entirely contained in the interior of the ball $B(x,r)$. The first part of the claim says that if the two paths have the same endpoints, then they have to coincide. In other words, if two curves $\gamma_1$ and $\gamma_2$ have two points in common, then they remain together between them. The second part of the claim considers a more general situation, namely, when $\tau_1$ and $\tau_2$ have disjoint interiors and one common endpoint, called $P$ in the figure (in particular, $\tau_1$ and $\tau_2$ could be consecutive subpaths of a same curve $\gamma_1=\gamma_2$). The other endpoints are called respectively $Q_1$ and $Q_2$. The inequality~(\ref{thesisnci}) then says that $Q_1$ and $Q_2$ cannot be too close, with respect to the distance between $P$ and $Q_1$. We have then to exclude the situation depicted in the figure, where $Q_1$ and $Q_2$ are very close to each other.\par
Notice that the second part of the statement implies the first one. Indeed, if $Q_1=Q_2$ then~(\ref{thesisnci}) gives $P=Q_1$, which is impossible since $\tau_1$ is an injective path. This means that if $\tau_1$ and $\tau_2$ have disjoint interiors then they cannot have the same endpoints. And in turn, this implies the first part of the statement, since two different paths with the same endpoints clearly admit two subpaths with the same endpoints and disjoint interiors.

\begin{proof}[Proof of Lemma~\ref{noncrazyint}]%\proofof{Lemma~\ref{noncrazyint}}
Thanks to the above discussion, we only prove the second part of the statement. Let $R_4\leq R_3$ be a small constant, which will be precised later. For simplicity of notation, as in Figure~\ref{Fignoncrazy} we set
\begin{align*}
P=\tau_1(0)=\tau_2(1)\,, && Q_1=\tau_1(1)\,, && Q_2 = \tau_2(0)\,, && d=|Q_1-Q_2|\,,
\end{align*}
so that~(\ref{thesisnci}) can be rewritten as $|P-Q_1|\leq C_1 d$. We limit ourselves to show the second part of the thesis, that is, that~(\ref{thesisnci}) holds if $\tau_1$ and $\tau_2$ have disjoint interiors. Indeed, this implies that in the case of disjoint interiors it is impossible that $Q_1=Q_2$. And as a consequence, if $Q_1=Q_2$ then $\tau_1$ and $\tau_2$ have to coincide, because otherwise there are further subpaths of $\tau_1$ and $\tau_2$ with disjoint interiors and the same endpoints, which has been excluded.

For brevity, and with a small abuse of notation, the image of a path $\tau:[0,1]\to\R^2$ will be sometimes denoted as $\tau$ instead of $\tau([0,1])$. We first assume that, as in the figure,
\begin{equation}\label{justforasec}
\hbox{\it the interior of the segment $Q_1Q_2$ does not intersect $\tau_1\cup \tau_2$}\,,
\end{equation}
at the end we will easily remove this assumption. Putting together $\tau_1,$ the segment $Q_1Q_2$, and $\tau_2$, we obtain then an injective, closed path in $B(x,r)$, which encloses a closed region that we call $G$. Without loss of generality we assume that this path is percurred clockwise, as in the figure. We also call $1\leq j,\,k\leq m$ the two indices such that the set enclosed by $\gamma_1$ (resp., $\gamma_2$) is contained in $E_j$ (resp., $E_k$). Notice that $j$ and $k$ are not necessarily different, in particular if $\gamma_1=\gamma_2$ then of course $j=k$. Observe that $\haus^1$-a.e.\ point of $\tau_1\cup\tau_2$ has density $1/2$ with respect to $E_j$ or $E_k$, hence it cannot have density $1$ with respect to any of the sets of the cluster. As a consequence, for every $1\leq i\leq m$, regardless whether or not $i$ coincides with one between $j$ and $k$, applying~(\ref{effnoisl}) of Lemma~\ref{noisland} we get
\begin{equation}\label{gammasmall}
\haus^1(\partial^* (E_i\cap G)\cap (\tau_1\cup\tau_2)) \leq \haus^1(\partial^*(E_i\cap G)) \leq K \haus^1(E_i\cap \partial^* G)=K \haus^1(E_i\cap Q_1Q_2)\,.
\end{equation}
We can now easily reduce ourselves to the case when
\begin{align}\label{notinside}
\haus^1\big(\tau_1 \cap \partial^*(E_j\cap G)\big)=0\,, && \haus^1\big(\tau_2 \cap \partial^*(E_k\cap G)\big)=0\,.
\end{align}
Indeed, the path $\tau_1$ is part of the boundary of a connected component of $E_j\cap B(x,r)$, and then $E_j$ is ``on one side of $\tau_1$''. In other words, $\haus^1$-a.e.\ point of $\tau_1$ has density $1/2$ with respect to $E_j$ and, up to $\haus^1$-negligible sets, either all points of $\tau_1$ have density $1/2$ with respect to $E_j\setminus G$, or they all have density $1/2$ with respect to $E_j\cap G$. The first case corresponds to the left assumption in~(\ref{notinside}), and it is the one depicted in Figure~\ref{Fignoncrazy}. As a consequence, if the left property of~(\ref{notinside}) fails, then by~(\ref{gammasmall}) with $i=j$ we have
\[
|\tau_1(0)-\tau_1(1)| \leq \haus^1(\tau_1) \leq \haus^1\big(\partial^* (E_j \cap G)\cap (\tau_1\cup\tau_2)\big) \leq K |Q_1-Q_2| = K |\tau_1(1)-\tau_2(0)|\,,
\]
then~(\ref{thesisnci}) is already proved with $C_1=K$. Similarly, if the right property of~(\ref{notinside}) fails, then
\[
|\tau_1(0)-\tau_1(1)| =|\tau_2(1)-\tau_1(1)|
\leq \haus^1(\tau_2)+ |\tau_1(1)-\tau_2(0)| \leq (K+1)|\tau_1(1)-\tau_2(0)|\,,
\]
hence we have again the validity of~(\ref{thesisnci}) with $C_1=K+1$. Therefore, from now on and without loss of generality we assume that~(\ref{notinside}) is true.\par
Putting together all the bounds~(\ref{gammasmall}) varying $1\leq i\leq m$, we have
\begin{align}\label{Gammanotsolong}
\haus^1(\Gamma) \leq Kd\,, && \hbox{where} && \Gamma=\bigcup\nolimits_{i=1}^m \partial^* (E_i\cap G) \cap (\tau_1\cup \tau_2)\,.
\end{align}
This bound gives an important information. Namely, if $d\ll |Q_1-P|$, then $\Gamma$ is only a very small portion of $\tau_1\cup\tau_2$. In other words, it is possible that some curves $\gamma_h$ as in Lemma~\ref{lem1reg} enter in $G$, at least if $Q_1\neq Q_2$ (such curves are not shown in Figure~\ref{Fignoncrazy} to keep the figure clear). Nevertheless, these curves cover only a small part of $\tau_1\cup\tau_2$. As a consequence, the greatest part of $\tau_1\cup\tau_2$ does not belong to $\Gamma$, and then by construction it is done by points which belong to $\partial^* E_0$.\par

We call now $\overline h:\R^2\to\R^+$ the function given by $\overline h(v)=h(x,v)$ for every $v\in\R^2$ (keep in mind that $x$ is the center of the disk), we set $\h:\R^2\to\R$ as $\h(\nu)=\bar h(\hat \nu)$ where $\hat \nu$ is the angle obtained by rotating $\nu$ of $90^\circ$ clockwise, and we denote by $\len$ the length of curves given by~(\ref{lengthcurves}). Applying then Corollary~\ref{helpnolakes} to the paths $\tau_1\cup Q_1Q_2$ and $\tau_2$, which are two injective paths, disjoint except for the common endpoints (i.e., $P$ and $Q_2$), we get an injective path $\tau:[0,1]\to G$ connecting $P$ and $Q_2$ and satisfying~(\ref{half}), which reads as
\begin{equation}\label{halfthistime}
\len(\tau) + \len(\hat\tau) \leq \len(\tau_1\cup Q_1Q_2)+\len(\tau_2)
\leq \len(\tau_1)+\len(\tau_2)+h_{\rm max}d\,.
\end{equation}
We call $G_1$ and $G_2$ the parts of $G$ enclosed by $\tau_1\cup Q_1Q_2\cup \hat\tau$ and by $\tau\cup\tau_2$ respectively, and we are in a position to define the competitor $\E'$. In fact, we set $E_i'= E_i\setminus G$ for every $i\notin \{j,\,k\}$. Moreover, if $j\neq k$ then we set $E_j'=E_j \cup G_1$ and $E'_k=E_k \cup G_2$, while if $j=k$ we set $E'_j = E_j \cup G$. Notice that, in this way, and keeping in mind~(\ref{notinside}), we remove from $\partial^*\E$ both $\tau_1$ and $\tau_2$, as well as the part of $\partial^*\E$ which was in the interior of $G$, if any. Conversely, we add a part of the segment $Q_1Q_2$ and, if $j\neq k$, the path $\tau$ (that could contain parts of $\tau_1$ and $\tau_2$, which would then be re-added to the boundary). In particular, notice that if $j\neq k$ then the path $\tau$ is added to $\partial^*\E'$, and it is common boundary between $E_j'$ and $E_k'$. Instead, if $j=k$, then $\tau$ is simply not added to $\partial^*\E'$.\par

Recalling that $\omega$ is the modulus of continuity of $h$ in the first variable inside $\textsf{D}$, we select $R_4$ so small that $\omega(R_4)< h_{\rm min}/6$, thus for every $y\in B(x,r)$ and $\nu\in\S^1$ we have
\[
\frac 56\, \overline h(\nu) \leq \overline h(\nu)- \frac{h_{\rm min}}6<\overline h(\nu)- \omega(r) \leq h(y,\nu) \leq \overline h(\nu) + \omega(r)< \overline h(\nu) + \frac{h_{\rm min}}6 \leq \frac 76\,\overline h(\nu)\,.
\]
As a consequence, keeping in mind that $\tau_1\cup Q_1Q_2\cup\tau_2$ is a clockwise and closed path, and that points of $\tau_1\setminus \Gamma$ (resp., $\tau_2\setminus\Gamma$) belong to $\partial^* E_j\cap\partial^* E_0$ (resp., $\partial^* E_k\cap \partial^* E_0$), we use~(\ref{halfthistime}) and~(\ref{Gammanotsolong}) to get
\begin{equation}\label{oneesti}\begin{split}
P(\E')-P(\E)&\leq \frac 76\,\frac{\len(\tau)+\len(\hat\tau)}2 - \frac 56\, \big(\len(\tau_1) +\len(\tau_2)\big)+ h_{\rm max} \big(\haus^1(\Gamma)+d\big)\\
&\leq -\frac 14\, \big(\len(\tau_1) +\len(\tau_2)\big)+ h_{\rm max} \bigg(\haus^1(\Gamma)+\frac{19}{12}\,d\bigg)\\
& \leq -\frac {h_{\rm min}}4\, \haus^1(\tau_1\cup\tau_2) + h_{\rm max} (K+2)\,d\,.
\end{split}\end{equation}
Let us call again $\eps=|\E|-|\E'|$, so that by construction, by the (Euclidean) isoperimetric inequality we have
\[
|\eps|^\beta \leq (2 |G|)^\beta \leq \frac 1{(2\pi)^\beta} \, \big(\haus^1(\tau_1 \cup\tau_2)+d\big)^{2\beta}\leq 
\haus^1(\tau_1 \cup\tau_2)^{2\beta}+d^{2\beta}
\leq \haus^1(\tau_1 \cup\tau_2)^{2\beta}+d\,.
\]
Notice that, in the last inequality, we have used the fact that $\beta\geq 1/2$, which is true since $\eta\beta\geq 1$ and the $\eta$-growth condition implies that $\eta\leq 2$, and the fact that $d<1$, which is obvious as soon as $R_4<1/2$. We can now apply Lemma~\ref{labello} to find a cluster $\E''$ with $|\E''|=|\E|$ and such that, also by~(\ref{oneesti}),
\[\begin{split}
P(\E'') &\leq P(\E') + \Ceeb |\eps|^\beta\\
&\leq P(\E) -\frac{h_{\rm min}}4\,\haus^1(\tau_1 \cup\tau_2)+ \Big(h_{\rm max} (K+2)+ \Ceeb\Big)\,d + \Ceeb \haus^1(\tau_1 \cup\tau_2)^{2\beta}\,.
\end{split}\]
The optimality of $\E$ implies then that
\[
\frac{h_{\rm min}}4\,\haus^1(\tau_1 \cup\tau_2)\leq \Big(h_{\rm max} (K+2)+ \Ceeb\Big)\,d + \Ceeb \haus^1(\tau_1 \cup\tau_2)^{2\beta}\,.
\]
We now claim that
\begin{equation}\label{sinceopt}
\frac{h_{\rm min}}5\,\haus^1(\tau_1 \cup\tau_2)\leq \Big(h_{\rm max} (K+2)+ \Ceeb\Big)\,d\,.
\end{equation}
This inequality follows from the previous one as soon as we show that
\[
\Ceeb \haus^1(\tau_1 \cup\tau_2)^{2\beta} \leq \frac{h_{\rm min}}{20}\,\haus^1(\tau_1 \cup\tau_2)\,.
\]
If $\beta>1/2$, this last bound clearly holds as soon as $\haus^1(\tau_1 \cup\tau_2)$ is small enough, and in turn, since $\tau_1\cup\tau_2\subseteq\partial^*\E$, this is true by Lemma~\ref{13/2}, up to possibly decreasing $R_4$. Instead, if $\beta=1/2$, and then necessarily $\eta=2$ and $\eta\beta=1$, the bound holds by~(\ref{Cperissmall}) as soon as we define
\begin{equation}\label{ceeb3}
\Ceeb^3=\frac{h_{\rm min}}{20}\,.
\end{equation}
We have then shown the validity of~(\ref{sinceopt}), which implies~(\ref{thesisnci}) since $|\tau_1(1)-\tau_2(0)|=d$ and $|\tau_1(0)-\tau_1(1)|=|P-Q_1|\leq \haus^1(\tau_1)$. Summarizing, we have concluded the proof under the additional assumption~(\ref{justforasec}).\par

We are then only left to consider the case when the interiors of $\tau_1$ and $\tau_2$ are disjoint and the open segment $Q_1Q_2$ intersects points of $\tau_1\cup\tau_2$. In this case, since $\tau_1([0,1])$ and $\tau_2([0,1])$ are closed, we can define $\widetilde Q_1$ and $\widetilde Q_2$ two points with minimal distance among the pairs in $\tau_1\cap Q_1Q_2$ and $\tau_2\cap Q_1Q_2$. We can then call $\tilde \tau_1$ (resp., $\tilde\tau_2$) the subpath of $\tau_1$ (resp., $\tau_2$) between $P$ and $\widetilde Q_1$ (resp., between $\widetilde Q_2$ and $P$). Since by minimality the open segment $\widetilde Q_1\widetilde Q_2$ does not intersect $\tilde\tau_1\cup\tilde\tau_2 \subseteq \tau_1\cup\tau_2$, we know the validity of~(\ref{thesisnci}) for $\tilde\tau_1$ and $\tilde\tau_2$. Therefore, we easily deduce the validity also for $\tau_1$ and $\tau_2$, since using the minimality of $\widetilde Q_1$ and $\widetilde Q_2$ we get
\[\begin{split}
\big|\tau_1(0)-\tau_1(1)\big| &= |P-Q_1|
\leq |P-\widetilde Q_1| + |\widetilde Q_1 - Q_1|
\leq C_1 |\widetilde Q_1 - \widetilde Q_2 | + |\widetilde Q_1 - Q_1|\\
&\leq C_1 |\widetilde Q_1 - Q_2 | + |\widetilde Q_1 - Q_1|
\leq 
 C_1 |Q_1-Q_2| = C_1 |\tau_1(1)-\tau_2(0)|\,.
\end{split}\]
\end{proof}

An immediate corollary of the above result is the following one, which generalizes a part of the claim of Lemma~\ref{lem1reg} also to $E_0$.

\begin{cor}\label{correg}
Let $B(x,r)\subseteq \textsf{D}$ be a ball with $r<R_4$ and $0<\haus^0(\partial^*\E\cap\partial B(x,r))<\infty$, and let $F$ be a connected component of $E_0\cap B(x,r)$. There exists an injective map $\gamma:\S^1\to\R^2$ of finite length such that $\partial^* F=\partial F=\gamma(\S^1)$ up to $\haus^1$-negligible subsets.
\end{cor}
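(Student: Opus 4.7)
The plan is to leverage the Jordan-curve description of the colored connected components already provided by Lemma~\ref{lem1reg}, combined with the ``non-crossing'' phenomenon encoded in Lemma~\ref{noncrazyint}. Since $\H^0(\partial^*\E\cap\partial B(x,r))\in\N$ and each colored component of $\bigcup_{i=1}^m E_i\cap B(x,r)$ meets $\partial B(x,r)$ in a set of positive $\H^1$ measure by Lemma~\ref{lem1reg}, there are only finitely many such components; I denote their bounding Jordan curves by $\gamma_1,\dots,\gamma_N$. In the density-$1$ representative, $\bigcup_{i=1}^m E_i\cap B(x,r)$ is open, so $E_0\cap B(x,r)$ is open in $B(x,r)$, and $F$ is one of its connected components. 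By construction
\[
\partial F \subseteq \partial B(x,r) \cup \bigcup_{k=1}^N\gamma_k(\S^1).
\]

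The first step is to check that this boundary is a finite union of arcs of finite total length. The portion $\partial F\cap\partial B(x,r)$ is a finite union of (open) arcs, by the finiteness assumption on $\partial^*\E\cap\partial B(x,r)$. Each $\gamma_k$ meets $\overline{F}$ in only finitely many maximal subarcs: otherwise one could extract two subarcs of $\gamma_k$ with disjoint interiors, one common endpoint, and arbitrarily close ``far'' endpoints, which by a compactness argument would contradict the inequality~\eqref{thesisnci} of Lemma~\ref{noncrazyint}. Consequently $\partial F$ is a finite union of $C^0$ arcs of finite length, and the identification $\partial F=\partial^* F$ up to $\H^1$-negligible subsets follows from the standard fact that $\H^1$-a.e.\ point of $\partial F$ has density $1/2$ with respect to $F$.

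The heart of the argument is then to piece these arcs into a single injective loop. The key structural fact, also from Lemma~\ref{noncrazyint}, is the absence of ``bigons'': any two distinct $\gamma_k,\gamma_\ell$ that share two common points must coincide in between. Regarding $K=\partial B(x,r)\cup\bigcup_k\gamma_k(\S^1)$ as a planar $1$-complex in $\overline{B(x,r)}$, this no-bigon property forces every bounded face of its complement to be a topological disk with a Jordan-curve boundary. Since $F$ is one such face, I obtain $\partial F=\gamma(\S^1)$ for an injective $\gamma:\S^1\to\R^2$ of finite length. Concretely, the parametrization is obtained by traversing $\partial F$ with a fixed orientation (say, with $F$ always on the left): one follows a given $\gamma_k$ or an arc of $\partial B(x,r)$, and at every junction point one switches to the unique continuation consistent with that orientation. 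The traversal terminates in finitely many steps by finiteness of the arcs, and its injectivity before closure is enforced by Lemma~\ref{noncrazyint}, since revisiting a previously traversed point before closing the loop would produce two subpaths of $\bigcup_k\gamma_k$ with identical endpoints but distinct interiors, contradicting the first part of the lemma.

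The main technical obstacle is the combinatorial step turning the ``no-bigon'' property into the statement that $F$ is a topological disk: one must handle junction configurations where several of the $\gamma_k$ arrive along a common shared subpath before diverging, and verify that the orientation-preserving traversal above is globally well defined. This is a purely planar-topology argument that uses only Lemma~\ref{noncrazyint} and the finiteness of the arc decomposition established in the first step.
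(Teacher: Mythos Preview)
Your overall strategy matches the paper's: both proofs use the Jordan curves from Lemma~\ref{lem1reg} together with the intersection structure given by Lemma~\ref{noncrazyint}. The paper's route, however, is more direct and avoids the planar-topology discussion you invoke at the end. Rather than analyzing $\partial F$ and its traversal, the paper first restricts each $\gamma_j$ to the open ball, obtaining finitely many injective paths $\gamma_j^h:(0,1)\to B(x,r)$ with both endpoints on $\partial B(x,r)$ (finiteness comes immediately from $\H^0(\partial^*\E\cap\partial B(x,r))<\infty$). It then observes that $\H^1$-a.e.\ point of $\partial^*\E\cap B(x,r)$ lies in either one or two of these paths, and that $\partial^* E_0\cap B(x,r)$ is precisely (up to $\H^1$-null sets) the set of points lying in exactly \emph{one} of them. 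Since by Lemma~\ref{noncrazyint} each pairwise intersection $\gamma_j^h\cap\gamma_{j'}^{h'}$ is a single connected subpath, removing these overlaps leaves finitely many pairwise disjoint arcs $\tau_k$ whose endpoints must match up, and the Jordan curves follow.

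One concrete gap in your write-up is the finiteness step. Your compactness argument (``two subarcs of $\gamma_k$ with disjoint interiors, one common endpoint, and arbitrarily close far endpoints'') does not follow from having infinitely many maximal subarcs of $\gamma_k\cap\overline F$: accumulation of endpoints does not produce a \emph{shared} endpoint, so the hypothesis of Lemma~\ref{noncrazyint} is not met. The clean fix is exactly the paper's observation: the places where $\gamma_k$ enters or leaves $\overline F$ are points where $\gamma_k$ meets some other $\gamma_\ell$ or $\partial B(x,r)$; since there are finitely many $\gamma_\ell$ and each $\gamma_k\cap\gamma_\ell$ is a single connected subpath by Lemma~\ref{noncrazyint}, there are only finitely many such transition points. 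With this correction your traversal argument goes through, though the paper's multiplicity-one characterization makes the closing-up of the arcs into loops essentially automatic and bypasses the ``no-bigon'' discussion entirely.
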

\begin{proof}
Let us take the collection $\{\gamma_j\}$ of all the curves given by Lemma~\ref{lem1reg}, which parametrize the boundaries of all the connected components of the sets $E_i\cap B(x,r)$ with $1\leq i \leq m$. Since all these curves have to reach $\partial B(x,r)$, by assumption and also using Vol'pert Theorem~\ref{volpert} we deduce that they are finitely many. For each $j$, the set $\gamma_j\cap B(x,r)$ is a disjoint, finite union if injective subpaths $\gamma_j^h:(0,1)\to B(x,r)$, each of them having both endpoints in $\partial B(x,r)$. Hence, altogether there are only finitely many paths $\gamma_j^h$. Notice that $\partial^*\E\cap B(x,r)$ coincides with the union of these paths. Moreover, $\haus^1$-a.e.\ point $z\in\partial^*\E\cap B(x,r)$ belongs to exactly two boundaries $\partial^* E_i \cap B(x,r)$ with $0\leq i \leq m$. If both boundaries correspond to indices $i\neq 0$, then $z$ belongs to exactly two of the paths $\gamma_j^h$, and $z\notin \partial^* E_0$. Conversely, if one of the two boundaries corresponds to $i=0$, thus in particular $z\in \partial^* E_0$, then $z$ belongs to exactly one of the paths $\gamma_j^h$. Therefore, $\partial^* E_0\cap B(x,r)$ coincides $\haus^1$-a.e.\ with the points belonging to exactly one of the paths $\gamma_j^h$.\par

Notice now that, by construction and thanks to Lemma~\ref{noncrazyint}, the intersection between any two of the paths $\gamma_j^h$ is either empty, or a single point, or a common closed subpath. As a consequence, $\partial^* E_0\cap B(x,r)$ coincides $\haus^1$-a.e.\ with the union of finitely many injective curves $\tau_k:(0,1)\to B(x,r)$ of finite length, which are the parts of the paths $\gamma_j^h$ which do not belong to any other of the paths. By construction, the curves $\tau_k$ are pairwise disjoint. As a consequence, every endpoint of each curve $\tau_k$ must be also endpoint of some of the other curves, and then the claim immediately follows.
\end{proof}

Notice that, by Lemma~\ref{lem1reg} and Corollary~\ref{correg}, we now know that the sets $\partial\E$ and $\partial^*\E$ coincide in $\textsf{D}$ up to $\haus^1$-negligible subsets. We can now show the existence of arbitrarily small circles around each point of $\textsf{D}$ with at most three points of $\partial\E$.

\begin{lem}[At most three points]\label{atmost3}
There exist $R_5\leq R_4$ and $C_2>2$, only depending on $h$, $\textsf{D}$ and $\E$, such that for every ball $B(x,r)\subseteq \textsf{D}$ with $r< R_5$ there is $r/C_2 \leq \rho\leq r$ such that
\begin{equation}\label{thencons}
\#\Big(\partial\E \cap \partial B(x,\rho) \Big) \leq 3\,.
\end{equation}
\end{lem}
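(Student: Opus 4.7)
\medskip

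\noindent\textbf{Proof plan.} I argue by contradiction: suppose there exist $x\in\R^2$ and $r$ small with $B(x,r)\subseteq D$ such that for almost every $\rho\in[r/C_2,r]$ for which Vol'pert's Theorem~\ref{volpert} applies, one has $N(\rho):=\#(\partial\E\cap\partial B(x,\rho))\geq 4$. The strategy is to construct a competitor that violates the minimality of $\E$, with savings coming from Proposition~\ref{90prop} (the $90^\circ$ property).

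The first step is a quantitative selection of a ``good'' radius. Combining the bound $L(r):=\H^1(\partial\E\cap B(x,r))\leq 7(h_{\rm max}/h_{\rm min})r$ from Lemma~\ref{13/2} with the coarea identity
\[
\int_0^r \sum_{y_i\in\partial\E\cap\partial B(x,\rho)} \frac{1}{|\cos\theta_i(\rho)|}\,d\rho = L(r),
\]
where $\theta_i(\rho)$ is the angle at $y_i$ between the tangent to $\partial\E$ and the radial direction, an averaging argument yields a $\rho\in[r/C_2,r]$ at which Vol'pert holds, $N(\rho)\geq 4$, and the arcs of $\partial\E$ cross $\partial B(x,\rho)$ in a quantitatively near-radial way (so that $\sum_i 1/|\cos\theta_i(\rho)|$, and hence $L'(\rho)$, is controlled). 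Taking $C_2$ large forces such a $\rho$ to exist.

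Next, one constructs an ad hoc competitor $\F$ coinciding with $\E$ outside $B(x,\rho)$. By Lemma~\ref{lem1reg} and Corollary~\ref{correg}, $\partial\E\cap\overline{B(x,\rho)}$ is a finite union of Jordan arcs joining the $N(\rho)$ points of $\partial\E\cap\partial B(x,\rho)$, and by Lemma~\ref{noncrazyint} pathological intersections are excluded. Inside $B(x,\rho)$ one replaces the arcs by a radial configuration based at $x$: the $N(\rho)$ radii from $x$ to each $y_i$, with sectors labelled according to the colors of the arcs of $\partial B(x,\rho)\setminus\{y_i\}$ in $\E$. The cost of the replacement is bounded using Lemma~\ref{lemmagiorgio} (segments are weighted-shortest) and Corollary~\ref{helpnolakes} (paths enclosed between two arcs can be shortened), with the near-radial condition from Step~1 keeping the excess small. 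One then applies Proposition~\ref{90prop} to the resulting radial configuration with $N(\rho)\geq 4$ radii, using $\overline h(v)=h(x,v)$ and Remark~\ref{fixdelta}; the discrepancy between $h$ and $h(x,\cdot)$ on $B(x,\rho)$ is at most $\omega(\rho)$ per unit length and is absorbed into the $\delta\rho$ gain of the $90^\circ$ argument.

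Finally, Lemma~\ref{labello} produces $\E''$ with $|\E''|=|\E|$ and $P(\E'')\leq P(\F)+\Ceeb(2\Cgr\rho^\eta)^\beta$. If $\eta\beta>1$ the correction is $o(\rho)$ and is dominated by $\delta\rho$ once $R_5$ is small; if $\eta\beta=1$, one records a further constant
\begin{equation}\label{ceeb4}
\Ceeb^4 = \frac{\delta}{4(2\Cgr)^\beta}
\end{equation}
and uses (\ref{Cperissmall}) to guarantee $\Ceeb(2\Cgr)^\beta\rho\leq \delta\rho/4$. Combining Steps~1--3 with the volume correction yields $P(\E'')<P(\E)$, contradicting minimality. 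The main obstacle is step (ii) of the competitor construction, namely controlling the perimeter cost of passing from generic Jordan arcs to a radial configuration: a priori the excess is of order $N(\rho)\rho\, h_{\rm max}$, which would swamp the $\delta\rho$ gain. The delicate point is therefore the quantitative averaging of Step~1, together with a careful asymmetry-aware use of Lemma~\ref{noncrazyint} to guarantee that at the chosen $\rho$ the arcs are radial enough for the excess to stay below $\delta\rho/4$.
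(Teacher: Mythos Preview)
Your overall architecture (competitor near $x$, apply Proposition~\ref{90prop}, then Lemma~\ref{labello}) matches the paper, but the heart of the construction --- Step~(ii), passing from the actual arcs to a radial configuration at cost $o(\rho)$ --- has a genuine gap that your proposed fix does not close.

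The coarea averaging in your Step~1 only controls the angles $\theta_i(\rho)$ at which the arcs \emph{cross} $\partial B(x,\rho)$. It says nothing about where those arcs go \emph{inside} $B(x,\rho)$. An arc can enter $B(x,\rho)$ radially at $y_i$, then bend immediately and stay in the annulus $B(x,\rho)\setminus B(x,\rho/2)$ before exiting at $y_j$. Replacing such an arc by the two radii $xy_i$, $xy_j$ adds perimeter of order $h_{\rm max}\rho$ per arc, and neither Lemma~\ref{lemmagiorgio} (which compares a path to the segment with the \emph{same} endpoints, not to a segment ending at $x$) nor Lemma~\ref{noncrazyint} (which bounds $|Q_1-P|$ in terms of $|Q_1-Q_2|$ for disjoint subpaths) prevents this. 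So the excess really is of order $N(\rho)h_{\rm max}\rho$, and the averaging does not reduce it.

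The paper's proof supplies the missing geometric idea: a pigeonhole over logarithmic annuli. One first uses Lemma~\ref{13/2} to find $r_0\in[r/M,r]$ with at most $M$ points on $\partial B(x,r_0)$, hence at most $M$ arcs $\psi_j$ inside, and records the finitely many ``special'' radii $\rho_j=\min_t|\psi_j(t)-x|$ and $\rho^\pm_{k,l}=|x-\text{endpoints of }\psi_k\cap\psi_l|$. Since there are at most $M^2$ such numbers, for a large constant $H$ there exists $r_1\in(r_0/H^{2M^2+3},r_0/H)$ with no special radius in $[r_1/H,Hr_1]$. This forces every arc that meets $B(x,Hr_1)$ to penetrate all the way into $B(x,r_1/H)$, and every intersection to begin/end either outside $B(x,Hr_1)$ or inside $B(x,r_1/H)$. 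Now replacing each arc's interior portion by the radius to $x$ costs at most $h_{\rm max}r_1/H$ per arc (since the arc already reaches within $r_1/H$ of $x$), i.e.\ total cost $\leq 2Mh_{\rm max}r_1/H$, which is beaten by the $\delta r_1$ gain from Proposition~\ref{90prop} once $H>4Mh_{\rm max}/\delta$. This is why $C_2$ ends up as $MH^{2M^2+3}$ and why the constant in \eqref{ceeb4} carries a factor $H$.
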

\begin{proof}
As already done several times, we start by defining $\overline h:\R^2\to\R^+$ as $\overline h(v)=h(x,v)$, and $\overline P$ the perimeter obtained by substituting $h$ with $\overline h$ in~(\ref{weightedvolper}). Moreover, we call again $\h:\R^2\to \R^+$ the function such that $\h(\nu)=\overline h(\hat \nu)$ where $\hat \nu$ is the angle obtained by rotating $\nu$ of $90^\circ$ clockwise, and we denote by $\len$ the length of curves given by~(\ref{lengthcurves}).\par

Let now $M$ be the smallest integer strictly larger than $1+7\,\frac{h_{\rm max}}{h_{\rm min}}$. Let $R_5\leq R_4$ and $C_2>2$ be two constants to be specified later, and let us take a ball $B(x,r)\subseteq \textsf{D}$ with $r< R_5$. By Lemma~\ref{13/2}, there exists a radius $r/M \leq r_0\leq r$ such that $\partial B(x,r_0)\cap \partial\E$ is made by at most $M$ points. More precisely, $r_0$ is a Lebesgue point of the function $\rho\mapsto\haus^0(\partial B(x,\rho)\cap \partial\E)$, and the value of this function at $\rho=r_0$ is at most $M$.\par

By Lemma~\ref{lem1reg}, for every $1\leq i\leq m$ the boundary of each connected component of the set $E_i\cap B(x,r_0)$ is a closed curve of finite length intersecting $\partial B(x,r_0)$. Intersecting these curves with the interior of the ball, we have finitely many paths of finite length inside $B(x,r_0)$ with both endpoints on $\partial B(x,r_0)$. Let us denote by $\psi_j:(0,1)\to B(x,r_0)$ these paths. We observe that they are at most $M$. Indeed, every path has two endpoints in $\partial B(x,r_0)\cap \partial\E$, and the fact that $\rho$ is a Lebesgue point of $\rho\mapsto \haus^0(\partial\E\cap \partial B(x,\rho))$ implies that each point of $\partial B(x,r_0)\cap \partial\E$ can be endpoint of at most two paths. For every $j$, we call
\[
\rho_j = \min \big\{|\psi_j(t)-x|,\, 0<t< 1\big\}\in [0,r_0)\,.
\]
Remember that different paths may have parts in common, actually $\haus^1$-a.e.\ point of $\partial\E\setminus\partial E_0$ belongs to two different paths, as already observed in Corollary~\ref{correg} (where we called the paths $\gamma_j^h$ instead of $\psi_j$). However, by Lemma~\ref{noncrazyint}, the intersection between any two of the paths is either empty, or a common closed subpath, which might be a single point. Let then $\psi_k$ and $\psi_l$ be two paths such that $\psi_k\cap\psi_l\neq \emptyset$, let us call $\gamma:[0,1]\to\R^2$ a parametrization of $\psi_k\cap\psi_l$, either injective or constant, and set $\rho^\pm_{k,l}\in [0,r_0)$ as
\begin{align*}
\rho^-_{k,l} = |\gamma(0)-x|\,, &&
\rho^+_{k,l} =|\gamma(1)-x|\,.
\end{align*}
Notice that the orientation of $\gamma$ is not univoquely determined, but the pair $\rho^\pm_{k,l}$ is well-defined. Let now $H>2C_1+1$ be a large constant, only depending on $\textsf{D},\,h$ and $\E$ and to be specified later. Since the paths $\psi_j$ are at most $M$, the constants $\rho_j$ and $\rho_{k,l}^\pm$ are at most $M^2$. As a consequence, we can fix
\begin{equation}\label{bigr1}
\frac{r_0}{H^{2M^2+3}} < r_1 < \frac {r_0}H
\end{equation}
such that for every $j$, and for every pair $(k,l)$ such that $\psi_k\cap\psi_l\neq\emptyset$ we have
\begin{align}\label{nothap}
\rho_j \notin \bigg[ \frac{r_1}H, Hr_1\bigg]\,, &&
\rho_{k,l}^\pm \notin \bigg[ \frac{r_1}H,H r_1\bigg]\,.
\end{align}
Roughly speaking, this means that nothing ``special'' happens for a while around the circle $\partial B(x,r_1)$. More precisely, every path which enters in the ball $B(x,Hr_1)$ has to enter also in the much smaller ball $B(x,r_1/H)$, and the intersection between every two paths has to start/end either outside of the ball $B(x,Hr_1)$, or inside the small ball $B(x,r_1/H)$. Up to renumbering, we assume that $\psi_j\cap B(x,Hr_1)\neq \emptyset$ if and only if $1\leq j \leq M^-$ for some $M^-\leq M$.\par

Fix a path $\psi_j$ with $1\leq j\leq M^-$. By~(\ref{nothap}), this implies that also $\psi_j \cap B(x,r_1/H) \neq \emptyset$. We can then set $0<t^0_j < t^1_j< t^2_j <t^3_j <1$ as
\begin{align*}
t^0_j &= \inf \big\{t\in [0,1]:\, |\psi_j(t)-x| \leq r_1 \big\}\,, &
t^1_j &= \inf \Big\{t\in [0,1]:\, |\psi_j(t)-x| \leq \frac{r_1}H \Big\}\,\\
t^2_j &= \sup \Big\{t\in [0,1]:\, |\psi_j(t)-x| \leq \frac{r_1}H \Big\}\,, &
t^3_j &= \sup \big\{t\in [0,1]:\, |\psi_j(t)-x| \leq r_1 \big\}\,.
\end{align*}
We can easily notice that
\begin{equation}\label{entesc}
|\psi_j(t) - x | < H r_1 \quad \forall\, t \in [t^0_j ,t^3_j]\,.
\end{equation}
Indeed, assume that $|\psi_j(t)-x|\geq H r_1$ for some $t^0_j \leq t \leq t^3_j$. Then, we apply Lemma~\ref{noncrazyint}, being $\tau_1$ and $\tau_2$ the restrictions of $\psi_j$ to $[t,t^3_j]$ and $[t^0_j,t]$ respectively, so that~(\ref{thesisnci}) implies
\[
\big|\tau_1(0)-\tau_1(1)\big| \leq C_1 |\tau_1(1)-\tau_2(0)|
= C_1 |\psi_j(t^3_j)-\psi_j(t^0_j)| \leq 2 C_1 r_1\,,
\]
and this gives a contradiction since
\[
\big|\tau_1(0)-\tau_1(1)\big|=|\psi_j(t)-\psi_j(t^3_j)|\geq (H-1)r_1 
\]
and $H>2C_1+1$. We call $\psi_j^{\rm int}$ the ``interior part'' of $\psi_j$, that is, the restriction of $\psi_j$ to $[t^0_j,t^3_j]$, and we call also $\psi_j^{\rm int,1}$ and $\psi_j^{\rm int,2}$ the restrictions of $\psi_j$ to $[t^0_j,t^1_j]$ and to $[t^2_j,t^3_j]$ respectively, which are two disjoint subpaths of $\psi_j^{\rm int}$.\par

Now, keep in mind that $\haus^1$-a.e. point of $\partial\E$ belongs to the boundary of exactly two of the sets $E_i$, $0\leq i \leq m$. In particular, there exist an index $1\leq i\leq m$ such that $\psi_j\subseteq \partial E_i$, and there exists another index $0\leq i'\leq m$ with $i'\neq i$ such that $\psi_j(t^0_j)\in \partial E_{i'}$. We subdivide the indices $1,\,2,\, \cdots\,,\, M^-$ into the two subsets $\I^1_1$ and $\I^1_2$ by saying that $j\in \I^1_1$ if $i'\geq 1$, and $j\in \I^1_2$ if $i'=0$. Now, we claim that
\begin{equation}\label{remtog}
\psi_j\big( [t^0_j , t^1_j]\big) \subseteq \partial E_{i'}\,.
\end{equation}
To prove this property, we first assume that $j\in\I^1_1$, that is, $1\leq i' \leq m$. As a consequence, $\psi_j(t^0_j)$ is a point of some curve $\psi_{j'}$, and then $\psi_j \cap \psi_{j'}$ is a non-empty subpath of $\psi_j$, in particular $\psi_j\cap\psi_{j'}$ contains $\psi_j\big( [t^0_j,\bar t] \big)$ for some maximal value of $t^0_j \leq \bar t \leq 1$. By~(\ref{nothap}) we know that $|\psi_j(\bar t)-x|\notin [r_1/H,Hr_1]$, since $|\psi_j(\bar t)-x| \in \{\rho^-_{j,j'},\,\rho^+_{j,j'}\}$. We have then either that $|\psi_j(\bar t)-x|<r_1/H$, and then $\bar t\geq t^1_j$ by definition of $t^1_j$, or $|\psi_j(\bar t)-x|>Hr_1$, and then $\bar t\geq t^3_j$ by~(\ref{entesc}). In both cases, $\bar t\geq t^1_j$, and this shows~(\ref{remtog}) if $i'\geq 1$. In addition, this also shows that the point $\psi_j(t^0_j)$ is a ``special point'' also for $\psi_{j'}$, namely, it coincides with either $\psi_{j'}(t^0_{j'})$ or $\psi_{j'}(t^3_{j'})$.\par

Suppose now that $j\in \I^1_2$, i.e., $i'=0$, so that $\psi_j(t^0_j)$ does not belong to any $\psi_k$ with $k\neq j$. In particular, the point $\psi_j(t^0_j)$ belongs to a connected component of $E_0\cap B(x,r_0)$. As before, we have a maximal $\bar t\geq t^0_j$ such that $\psi_j\big([t^0_j,\bar t]\big)\subseteq \partial E_0$, and proving~(\ref{remtog}) in this case again reduces to showing that $\bar t\geq t^1_j$. By construction, either $\bar t=1$, and then~(\ref{remtog}) is already proved, or $\bar t<1$, and then $\psi_j(\bar t)\in \psi_{j'}$ for some $j'$, and in particular $|\psi_j(\bar t)-x|$ equals either $\rho^-_{j,j'}$ or $\rho^+_{j,j'}$. Exactly as before, this implies that $|\psi_j(\bar t)-x|\notin [r_1/H,Hr_1]$, and this proves $\bar t\geq t^1_j$. The property~(\ref{remtog}) is then proved. Of course, in the very same way, we have that $\psi_j(t^3_j)\in \partial E_i \cap \partial E_{i''}$, and that
\begin{equation}\label{remtog2}
\psi_j\big( [t^2_j , t^3_j]\big) \subseteq \partial E_{i''}\,.
\end{equation}
Moreover, we subdivide the indices $1,\,2,\,\cdots\,,\, M^-$ also into the two subsets $\I^2_1$ and $\I^2_2$, by saying that $j\in \I^2_1$ if $i''\geq 1$, and $j \in \I^2_2$ if $i''=0$.\par

We are now ready to define the competitor cluster $\E'$. In fact, for every $j$ we call $\widehat\psi_j^{\rm int, 1}$ the segment connecting $\psi_j(t^0_j)$ and $x$, and $\widehat\psi_j^{\rm int, 2}$ the segment connecting $x$ and $\psi_j(t^3_j)$. By construction, in particular keeping in mind~(\ref{remtog}) and~(\ref{remtog2}), we obtain that the set
\[
\partial\E \setminus \Big( \bigcup\nolimits_{j=1}^{M^-} \psi_j^{\rm int}\Big) \cup \Big( \bigcup\nolimits_{j=1}^{M^-} \widehat\psi_j^{\rm int, 1}\cup \widehat\psi_j^{\rm int, 2}\Big)
\]
is the boundary of a uniquely determined cluster, that we call $\E'$. In particular, we can observe that the ``colors'' of the boundaries of $\E'$ coincide with those of $\E$ in $B(x,Hr_1)\setminus B(x,r_1/H)$. More precisely, fix any $1\leq j \leq M^-$ and call $i,\,i'$ and $i''$ as before, so that $\psi_j^{\rm int,1}\subseteq\partial E_i \cap \partial E_{i'}$, and $\psi_j^{\rm int,2}\subseteq\partial E_i \cap \partial E_{i''}$. Then, also the path $\widehat\psi_j^{\rm int,1}$ is contained in $\partial E_i'\cap \partial E_{i'}'$, and $\widehat\psi_j^{\rm int,2}$ is contained in $\partial E_i'\cap \partial E_{i''}'$. And finally, by the definition~(\ref{PbE}) of the perimeter and by construction this implies that
\begin{equation}\label{2000}\begin{split}
\overline P(\E') &- \overline P(\E) \leq 
\sum_{j\in\I^1_1} \frac{\len(\widehat\psi_j^{\rm int,1})- \len(\psi_j^{\rm int,1})}2 +
\sum_{j\in\I^2_1} \frac{\len(\widehat\psi_j^{\rm int,2})- \len(\psi_j^{\rm int,2})}2\\
&
+\sum_{j\in\I^1_2} \Big(\len(\widehat\psi_j^{\rm int,1})- \len(\psi_j^{\rm int,1})\Big) +
\sum_{j\in\I^2_2} \Big(\len(\widehat\psi_j^{\rm int,2})- \len(\psi_j^{\rm int,2})\Big)\,.
\end{split}\end{equation}
We can easily estimate the terms of the above inequality. Indeed, for each $1\leq j\leq M^-$, $\widehat\psi_j^{\rm int,1}$ is the segment between $\psi_j(t^0_j)$ and $x$, while $\psi_j^{\rm int,1}$ is a path between $\psi_j(t^0_j)$ and some point $\psi_j(t^1_j)$ having distance $r_1/H$ from $x$, and then by Lemma~\ref{lemmagiorgio} we have that
\begin{align*}
\len(\widehat\psi_j^{\rm int,1})\leq \len(\psi_j^{\rm int,1})  + \frac{h_{\rm max}}H\, r_1 \,, &&
\len(\widehat\psi_j^{\rm int,2})&\leq \len(\psi_j^{\rm int,2})+ \frac{h_{\rm max}}H\, r_1 \,.
\end{align*}
Inserting these estimates in~(\ref{2000}), we obtain that
\begin{equation}\label{arrfra}
\overline P(\E')-\overline P(\E) \leq \frac{2 M h_{\rm max}}H\, r_1\,.
\end{equation}
We claim now that $\partial B(x,r_1)\cap \partial\E$ contains at most $3$ points. This will prove~(\ref{thencons}) with $\rho=r_1$, and recalling~(\ref{bigr1}) and the fact that $r/M \leq r_0 \leq r$ this will conclude the thesis with $C_2=M H^{2M^2+3}$. Suppose by contradiction that the claim is false, that is, $\partial B(x,r_1)\cap \partial\E$ contains at least $4$ points. Then, applying Proposition~\ref{90prop} to the cluster $\E'$ in the ball $B(x,r_1)$ with the distance $\overline h$, we obtain another cluster $\F$ which equals $\E'$ outside of $B(x,r_1)$ and such that $\overline P(\F) \leq \overline P(\E')- \delta r_1$, so that~(\ref{arrfra}) gives
\begin{equation}\label{prarr}
\overline P(\F) \leq \overline P(\E) - \frac \delta 2\, r_1
\end{equation}
as soon as $H > 4 M h_{\rm max}/\delta$. Keep in mind that, as observed in Remark~\ref{fixdelta}, the constant $\delta$ only depends on $h$ and $\textsf{D}$, but not on $x$ or $r$.\par

Since $\E$ and $\F$ coincide outside $B(x,Hr_1)$, we can estimate
\begin{equation}\label{ason}\begin{split}
\haus^1(\partial \F\cap B(x,H r_1)) &\leq \frac 1{h_{\rm min}}\, \overline P(\F; B(x,H r_1)) \leq \frac 1{h_{\rm min}}\, \overline P(\E; B(x,H r_1))\\
&\leq \frac {h_{\rm max}}{h_{\rm min}}\haus^1(\partial \E\cap B(x,H r_1))\,.
\end{split}\end{equation}
Let now $R_5\leq R_4$, only depending on $h,\,\textsf{D}$ and $\E$, be a constant such that
\[
\omega(R_5) < \frac{\delta h_{\rm min}^2}{28 H h_{\rm max}(h_{\rm min}+h_{\rm max})}\,.
\]
As a consequence, by Lemma~\ref{13/2}, (\ref{prarr}) and~(\ref{ason}) we have
\[\begin{split}
P(\F)-P(\E)&\leq \overline P(\F) - \overline P(\E) + \omega(r) \Big(\haus^1\big(\partial \E\cap B(x,Hr_1)\big)+\haus^1\big(\partial \F\cap B(x,H r_1)\big)\Big)\\
&\leq -\frac \delta 2\, r_1+ 7H\omega(r) \bigg( 1 + \frac {h_{\rm max}}{h_{\rm min}} \bigg) \, \frac{h_{\rm max}}{h_{\rm min}}\, r_1
\leq -\frac\delta 4\, r_1 \,.
\end{split}\]
Applying then Lemma~\ref{labello} we obtain a cluster $\E''$ with $|\E''|=|\E|$ such that
\[
P(\E'')\leq P(\F) + 2^\beta \Ceeb\Cgr^\beta (Hr_1)^{\eta\beta}
\leq P(\E)+ 2^\beta \Ceeb\Cgr^\beta H^{\eta\beta} r_1^{\eta\beta} - \frac \delta 4\, r_1\,.
\]
We can argue now as already done several times. Indeed, the contradiction $P(\E'')<P(\E)$, which concludes the proof, follows up to possibly further decrease $R_5$ if $\eta\beta>1$. Instead, if $\eta\beta=1$, it follows by~(\ref{Cperissmall}) as soon as we define
\begin{equation}\label{ceeb4}
\Ceeb^4=\frac\delta{2^{\beta+2} \Cgr^\beta H}\,.
\end{equation}
\end{proof}

Thanks to the above result, we can now show the ``no-lakes'' lemma in full generality.
\begin{lem}[No-lakes, general case]\label{nolakesgen}
There is $R_6\leq R_5$ such that, for every ball $B(x,r)\subseteq \textsf{D}$ with $r<R_6$, no connected component of $E_0$ can be compactly contained in $B(x,r/C_2)$, where $C_2$ is as in Lemma~\ref{atmost3}.
\end{lem}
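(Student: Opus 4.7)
The plan is to argue by contradiction: assume such a connected component $F$ of $E_0$ exists and construct a competitor violating the minimality of $\E$. The starting point is to apply Lemma~\ref{atmost3} to $B(x,r)$ (choosing $R_6\leq R_5$) to obtain a radius $\rho\in[r/C_2,r]$ with $\#(\partial\E\cap\partial B(x,\rho))\leq 3$. Since $F$ is compactly contained in $B(x,r/C_2)\subseteq B(x,\rho)$, its boundary, a Jordan curve by Corollary~\ref{correg}, sits strictly inside $B(x,\rho)$. Combining Lemma~\ref{lem1reg} (every component of $E_i\cap B(x,\rho)$ with $i\geq 1$ must touch $\partial B(x,\rho)$) with the at-most-$3$ boundary points condition implies that the total number of colored components inside $B(x,\rho)$ is at most $3$; in particular $F$ has at most $3$ distinct neighbors, and $\partial F$ decomposes as a cyclic union of finitely many arcs $\tau_k$, each contained in some $\partial^*E_{i(k)}$ with $i(k)\geq 1$, joined at (necessarily triple, by Lemma~\ref{atmost3} again) multi-junctions $P_k$.

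If $F$ has exactly one neighbor $E_\ell$, then $\partial^*F\subseteq\partial^*E_\ell$ and Lemma~\ref{nolakes} yields an immediate contradiction. The decisive step is therefore the case of $2$ or $3$ neighbors. In this case I would build an explicit competitor $\E'$ by cutting $F$ along paths through its interior that connect consecutive multi-junctions $P_k$, obtained by applying Corollary~\ref{helpnolakes} to pairs of arcs of $\partial F$ sharing endpoints, and assigning each resulting sector of $F$ to its adjacent neighbor chamber. The arcs $\tau_k$ of $\partial F$ then disappear from the boundary, producing a saving of order $L=\int_{\partial F}h(x,-\nu_F)\,d\H^1\geq h_{\min}\H^1(\partial F)$, while the newly created cuts, now between two colored sets, contribute with density $\tilde h$, of total length controlled by Corollary~\ref{helpnolakes} in terms of the outer $\overline h$-perimeter of $F$.

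The main obstacle is to show that, even in the asymmetric case, the resulting perimeter change satisfies $P(\E')-P(\E)\leq -c\,\H^1(\partial F)$ for some $c>0$ depending only on $h$ and $D$. The difficulty is that turning a piece of $\partial F$ (contributing $h(x,-\nu_F)$) into an interior boundary between two colored sets (contributing $\tilde h(x,\nu)=(h(\nu)+h(-\nu))/2$) can locally \emph{increase} the perimeter density by $(h(\nu)-h(-\nu))/2$, so one must exploit the strict convexity of $h$ and carefully choose the cuts---possibly running several competitors corresponding to different assignments of sectors to neighbors and taking the best one, together with the fact that $\omega(r)$-corrections between $h(y,\cdot)$ and $\overline h(\cdot)=h(x,\cdot)$ are harmless once $R_6$ is small---to ensure an overall saving proportional to $\H^1(\partial F)$. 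Once this is established, Lemma~\ref{labello} provides a volume-preserving competitor $\E''$ with additional cost at most $\Ceeb(2\Cgr\rho^\eta)^\beta$; since $\H^1(\partial F)\gtrsim|F|^{1/\eta}$ by Lemma~\ref{disgusting}, this correction is dominated by the saving for $R_6$ small enough when $\eta\beta>1$, and in the borderline case $\eta\beta=1$ one imposes a further smallness of $\Ceeb$ by defining $\Ceeb^5$ accordingly in~(\ref{Cperissmall}), yielding $P(\E'')<P(\E)$ and contradicting minimality.
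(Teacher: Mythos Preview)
Your overall architecture is right---apply Lemma~\ref{atmost3}, analyze the boundary structure of the lake $F$, build a competitor, close with Lemma~\ref{labello}---but the heart of the argument, the competitor in the three-neighbor case, is not actually carried out, and the difficulty you flag is real. In fact, once you have applied Lemma~\ref{atmost3} and Lemma~\ref{noncrazyint}, the one- and two-neighbor cases are \emph{impossible}: a single injective arc cannot close up into $\partial F$, and two arcs with two common endpoints must coincide on a connected subarc by Lemma~\ref{noncrazyint}, so there is no room for a lake between them. Hence the only case is three neighbors, and this is exactly the case where your construction is vague. Your averaging idea (``running several competitors\dots and taking the best one'') does not obviously succeed: if you simply fill $F$ with one of the three colors, the boundary arcs that survive switch from a colored--white interface (density $h(\cdot,\nu)$) to a colored--colored one (density $\tilde h$), which can \emph{increase} pointwise by $(h(-\nu)-h(\nu))/2$; averaging over the three fillings gives a net change controlled by $\sum_j\len(\hat\tau_j)-2\sum_j\len(\tau_j)$, which is negative only if $h_{\max}<2h_{\rm min}$. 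The cutting approach via Corollary~\ref{helpnolakes} runs into the same obstruction after one cut, because the remaining region is still a three-neighbor lake. There is also a small mismatch in your endgame: you bound the volume-correction cost by $\Ceeb(2\Cgr\rho^\eta)^\beta$ but compare it with a saving $\gtrsim|F|^{1/\eta}$; since $|F|$ can be arbitrarily small relative to $\rho^\eta$, you must instead use the sharper bound $\Ceeb(2|F|)^\beta$ coming from the actual volume change.

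The paper resolves the three-neighbor case by a different mechanism. First, by rescaling one reduces to ${\rm diam}(G)\geq r/(2C_2)$, so the lake has definite size relative to $r$. One then uses the full path structure in $B(x,\rho)$: the three arcs $\psi_j$ bounding the colored components run from $\partial B(x,\rho)$ through the junction points $A',B',C'$ of $\partial G$ and back out, and one \emph{straightens all of them} to segments (this already gives $\overline P(\widetilde\E)\leq\overline P(\E)$ by Lemma~\ref{lemmagiorgio}). The crucial quantitative input is Lemma~\ref{noncrazyint}, which forces the Euclidean perimeter $|A'-B'|+|B'-C'|+|C'-A'|$ of the straight triangle to be bounded \emph{below} by a constant times ${\rm diam}(G)\gtrsim r$. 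Strict convexity of $\overline h$ then yields, on this triangle, an honest saving $\gtrsim r$ by filling it with one well-chosen color (choosing the labeling so that the relevant triangle inequality is most violated). The comparison at the end is then saving $\gtrsim r$ against cost $\lesssim r^{\eta\beta}$, which closes as usual. The missing idea in your proposal is precisely this: reduce to a straight triangle of size comparable to $r$ via Lemma~\ref{noncrazyint}, and extract the gain from strict convexity on that triangle rather than from the curved boundary of $F$.
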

\begin{proof}
Let $R_6\leq R_5$ be a constant to be specified later, let $B(x,r)\subseteq \textsf{D}$ and assume that $G\comp B(x,r/C_2)$ is the closure of a connected component of $E_0$. We can reduce ourselves to the case that
\begin{equation}\label{estidiam}
{\rm diam}(G) \geq \frac r{2C_2}\,.
\end{equation}
Indeed, otherwise let $x'$ be any point internal to $G$, and let $r'=C_2{\rm diam}(G)$. Then $r'\leq r/2\leq R_6$ and $G\comp B(x',r'/C_2)\subseteq B(x',r')\subseteq B(x,r)\subseteq \textsf{D}$, thus we can replace $B(x,r)$ with $B(x',r')$ for which the analogous of~(\ref{estidiam}) clearly holds. Hence, we assume without loss of generality that~(\ref{estidiam}) holds true.

Applying Lemma~\ref{atmost3}, we find $r/C_2\leq \rho\leq r$ such that $\partial\E\cap \partial B(x,\rho)$ has at most three points. Using Lemma~\ref{lem1reg} as already done in Corollary~\ref{correg} and Lemma~\ref{atmost3}, we find then at most three paths $\psi_j:(0,1)\to B(x,\rho)$, $j\in \{1,\,2,\,3\}$, of finite length and with $\psi_j(0),\, \psi_j(1)\in \partial B(x,\rho)$, whose images contain the whole $\partial\E\cap B(x,\rho)$, thus in particular $\partial G$. Since any two of the paths $\psi_j$ may intersect only in a connected subpath by Lemma~\ref{noncrazyint}, we deduce that the intersection of any $\psi_j$ with $\partial G$ is a connected path. As a consequence, $\partial G$ is the union of at most three connected pieces, each contained in one of the $\psi_j$. Actually, these pieces have to be exactly three. In fact, it cannot be one since the paths $\psi_j$, being injective, may not contain loops, and they cannot be two because otherwise two paths $\psi_j$ would have a non connected intersection.\par

As a consequence, the paths $\psi_j$ have to be exactly three, thus also $\partial B(x,\rho)\cap \partial\E$ contains exactly three points, and each of them is endpoint of two different paths $\psi_j$. This shows that $\partial E_0$ does not intersect $\partial B(x,\rho)$. Moreover, each two of the three pahts $\psi_1,\,\psi_2,\,\psi_3$ have a non-empty intersection, which is a closed common subpath, with one endpoint in $\partial G$ and the other one in $\partial B(x,\rho)$. In particular, $E_0\cap B(x,\rho)=G$.\par

Keep in mind that each $\psi_j$ is part of the boundary of a connected component of $E_{\ell(j)}\cap B(x,\rho)$ for some $1\leq \ell(j)\leq m$. Since every two of the paths have a non-negligible intersection, we deduce that $\ell(1),\,\ell(2)$ and $\ell(3)$ are different indices. For simplicity of notation, and without loss of generality, we assume that $\ell(j)=j$ for each $j\in\{1,\,2,\,3\}$. Just to fix the ideas, we also call
\begin{align*}
A = \psi_1(0) = \psi_2(1)\,, && B=\psi_3(0) = \psi_1(1)\,, && C=\psi_2(0)=\psi_3(1)\,,
\end{align*}
and let us call $A',\,B',\,C'$ the other endpoints of the intersections $\psi_1\cap \psi_2$, $\psi_1\cap\psi_3$ and $\psi_2\cap\psi_3$ respectively. The situation is depicted in Figure~\ref{Fignolakes}, left.\par
\begin{figure}[htbp]
\begin{tikzpicture}[>=>>>>,smooth cycle,scale=.69]
\filldraw[fill=blue!35!white, draw=black, line width=.75pt] plot [tension=1] coordinates {(-2,0) (-3.2,.4) (-3.4,.6) (-3.2,0) (-3,-2) (-2,-1.8) (-1,-1.4) (-0.2,-1) (-1,-.3)};
\filldraw[fill=green!35!white, draw=black, line width=.75pt] plot [tension=1] coordinates {(-1,-1.4) (-2.5,-2.3) (-1,-3) (3,-3) (3.5,-.8) (2.99,-.2) (1.5,.5) (0.5,-1.5)};
\filldraw[fill=red!35!white, draw=black, line width=.75pt] plot [tension=1] coordinates {(-2,0) (-3.5,1) (-2.5,2.5) (-0.5,3.1) (3,2.7) (3.5,0) (1.5,.5) (-0.2,1)};
\filldraw[fill=white, draw=white] (-4,0) -- (-3,0) arc(180:0:3) -- (4,0) -- (4,3.5) -- (-4,3.5) -- cycle;
\filldraw[fill=white, draw=white] (-4,0) -- (-3,0) arc(-180:-0:3) -- (4,0) -- (4,-3.5) -- (-4,-3.5) -- cycle;
\fill (1.5,.5) circle (2pt);
\fill (-2,0) circle (2pt);
\fill (-1,-1.4) circle (2pt);
\fill (2.99,-0.21) circle (2pt);
\fill (-2.99,0.21) circle (2pt);
\fill (-2.33,-1.89) circle (2pt);
\draw[line width=1pt] (0,0) circle (3);
\draw (0,2) node {$E_1$};
\draw (-2,-.9) node {$E_2$};
\draw (1.2,-2.2) node {$E_3$};
\draw (-2.99,.21) node[anchor=south east] {$A$};
\draw (2.99,-.21) node[anchor=north west] {$B$};
\draw (-2.33,-1.89) node[anchor=north east] {$C$};
\draw (-2,0) node[anchor=south east] {$A'$};
\draw (1.5,.5) node[anchor=south west] {$B'$};
\draw (-1,-1.4) node[anchor=north] {$C'$};
\draw (0,1) node[anchor=south east] {$\psi_1$};
\draw (-1.3,-1) node[anchor=south west] {$\psi_2$};
\draw (1.2,-.8) node[anchor=north] {$\psi_3$};
\draw (0,0) node[anchor=north] {$G$};
\draw[->>>>] (-0.05,1) -- (0,1.01);
\draw[->>>>] (.95,-.79) -- (0.948,-.8);
\draw[->>>>] (-1.296,-.179) -- (-1.3,-.178);
\filldraw[fill=blue!35!white, draw=black, line width=.75pt] (8.01,.21) -- (9,0) -- (10,-1.4) -- (8.67,-1.89) arc(219:176:3);
\filldraw[fill=green!35!white, draw=black, line width=.75pt] (8.67,-1.89) -- (10,-1.4) -- (12.5,.5) -- (13.99,-.21) arc(356:219:3);
\filldraw[fill=red!35!white, draw=black, line width=.75pt] (13.99,-.21) -- (12.5,.5) -- (9,0) -- (8.01,.21) arc (176:-4:3);
\draw[line width=1] (11,0) circle (3);
\fill (12.5,.5) circle (2pt);
\fill (9,0) circle (2pt);
\fill (10,-1.4) circle (2pt);
\fill (13.99,-0.21) circle (2pt);
\fill (8.01,0.21) circle (2pt);
\fill (8.67,-1.89) circle (2pt);
\draw (11,2) node {$\widetilde E_1$};
\draw (9,-.9) node {$\widetilde E_2$};
\draw (12.2,-2.2) node {$\widetilde E_3$};
\draw (8.01,.21) node[anchor=south east] {$A$};
\draw (13.99,-.21) node[anchor=north west] {$B$};
\draw (8.67,-1.89) node[anchor=north east] {$C$};
\draw (9.1,0.1) node[anchor=south east] {$A'$};
\draw (12.5,.5) node[anchor=south west] {$B'$};
\draw (10,-1.4) node[anchor=north] {$C'$};
\draw[->] (4,-.4) .. controls (5.5,-.2) .. (7,-.4);
\end{tikzpicture}
\caption{The paths $\psi_1,\,\psi_2$ and $\psi_3$ and the points $A,\,B,\,C,\,A',\,B',\,C'$ in Lemma~\ref{nolakesgen}.}\label{Fignolakes}
\end{figure}
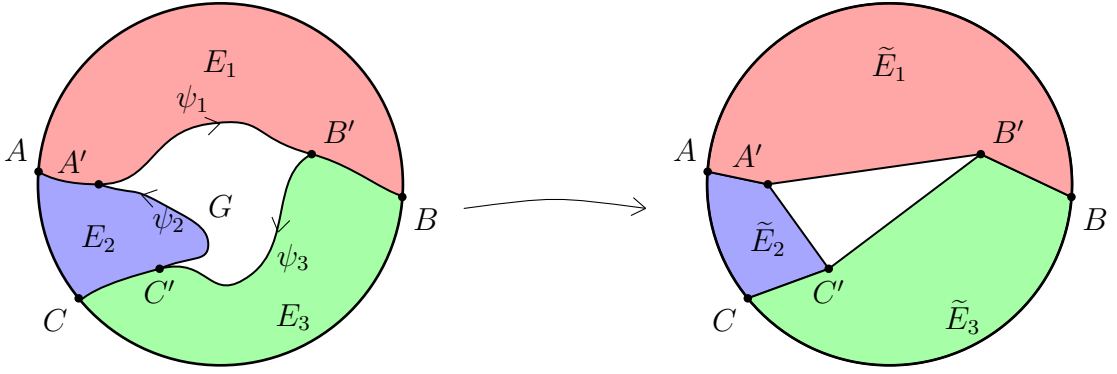
Notice now that ${\rm diam}(G)=|Q-P|$ for two points $P,\,Q\in\partial G$. Let us assume, just to fix the ideas, that $P\in \psi_1\cap \partial G$, and that $Q\in(\psi_1\cup\psi_3)\cap\partial G$. We can then apply Lemma~\ref{noncrazyint} to the paths $\tau_1$ and $\tau_2$ given by the restriction of $\psi_1$ between $P$ and $B'$, and between $A'$ and $P$ respectively, so that~(\ref{thesisnci}) gives $|P-B'| \leq C_1 |A'-B'|$. In the very same way, if $Q\in \psi_1$ we get $|Q-B'|\leq C_1 |A'-B'|$, while if $Q\in\psi_3$ we get $|Q-B'|\leq C_1|B'-C'|$. As a consequence, also by~(\ref{estidiam}) we deduce
\begin{equation}\label{pernotlo}
\frac r{2C_2} \leq {\rm diam}(G) \leq C_1\big(|A'-B'|+|B'-C'|+|C'-A'|\big)\,.
\end{equation}
We set now once again $\overline h:\R^2\to\R^+$ as $\overline h(\nu)=h(x,\nu)$, $\overline P$ the perimeter obtained substituting $h$ with $\overline h$ in~(\ref{weightedvolper}), $\h:\R^2\to\R^+$ as $\h(\nu)=\overline h(\hat\nu)$, being $\hat\nu$ the angle obtained rotating $\nu\in\R^2$ of $90^\circ$ clockwise, and $\len(\gamma)$ the lengt of any path $\gamma$ as in~(\ref{lengthcurves}). As in Figure~\ref{Fignolakes}, right, we define then a competitor $\widetilde \E$ by replacing the path $\psi_1$ with the union of the segments $AA',\, A'B'$ and $B'B$, the path $\psi_2$ with the segments $CC',\, C'A'$ and $A'A$, and the path $\psi_3$ with the segments $BB',\, B'C'$ and $C'C$. By Lemma~\ref{lemmagiorgio} we have $\overline P(\widetilde \E) \leq \overline P(\E)$. \par

Keeping in mind that $h$ is strictly convex in the second variable (in the sense of Definition~\ref{strictconv}), and then the unit ball corresponding to $\h$ is strictly convex, we have then a constant $\delta'>0$ such that
\[
\len(A'C')\leq \len (A'B')+\len(B'C') - 8\delta' h_{\rm max} {\rm dist}(B',A'C')\,.
\]
Putting this estimate together with the analogous ones for $C'B'$ and $B'A'$, we get
\[\begin{split}
\len(A'C') + \len(C'B')+\len(B'A') &\leq 2\big(\len(A'B')+\len(B'C')+\len(C'A')\big)\\
&\qquad- 4\delta' h_{\rm max}\big(|A'-B'|+|B'-C'|+|C'-A'|\big)\\
&\leq 2(1-2\delta')\big(\len(A'B')+\len(B'C')+\len(C'A')\big) \,,
\end{split}\]
which can be rewritten as
\[
\left(\!
\begin{array}{c}
\len(A'C')+\len(C'B')\\
+\\
\len(B'A')+\len(A'C')\\
+\\
\len(C'B')+\len(B'A')\\
\end{array}
\!\right)
\leq
\left(\!
\begin{array}{c}
(1-2\delta') \big(2\len(A'B')+\len(B'C')+\len(C'A')\big)\\
+\\
(1-2\delta') \big(2\len(B'C')+\len(C'A')+\len(A'B')\big)\\
+\\
(1-2\delta') \big(2\len(C'A')+\len(A'B')+\len(B'C')\big)
\end{array}
\!\right)\,.
\]
Hence, up to exchange the letters, we assume that
\[
\len(A'C')+\len(C'B') \leq (1-2\delta') \big(2\len(A'B')+\len(B'C')+\len(C'A')\big)\,.
\]
We are now in a position to define a second competitor, $\E'$, simply adding the triangle $A'B'C'$ to $\widetilde E_1$, that is, we set $E_j'=\widetilde E_j$ for every $j\neq 1$, and $E_1'=\widetilde E_1\cup A'B'C'$. By definition, we have then
\[\begin{split}
\overline P(\E') &= \overline P(\widetilde\E) -\len(A'B')+\frac{\len(A'C')-\len(C'A')+\len(C'B')-\len(B'C') } 2\\
&\leq \overline P(\E) -\delta'\, \big(2\len(A'B')+\len(B'C')+\len(C'A')\big)
\leq \overline P(\E) -\delta'\, \frac{h_{\rm min}}{2C_1C_2}\,r\,,
\end{split}\]
where in the last inequality we have also used~(\ref{pernotlo}).\par

The conclusion is now standard. Since $\E=\E'$ outside $B(x,r)$, and Lemma~\ref{13/2} gives a bound of $\haus^1(\partial\E\cap B(x,r))$ in terms of $r$, as soon as $R_6\leq R_5$ is small enough we have $\omega(r)$ so small that the above inequality implies
\[
P(\E')\leq P(\E) - \frac{\delta' h_{\rm min}}{3C_1C_2}\,r\,.
\]
Then, Lemma~\ref{labello} provides a further cluster $\E''$ with $|\E''|=|\E|$ such that
\[
P(\E'') \leq P(\E') + 2^\beta\Ceeb\Cgr^\beta r^{\eta\beta}
\leq P(\E) + 2^\beta\Ceeb\Cgr^\beta r^{\eta\beta}- \frac{\delta' h_{\rm min}}{3C_1C_2}\,r\,.
\]
And finally, the last inequality gives the searched contradiction $P(\E'')<P(\E)$ if $\eta\beta>1$ up to possibly furher reduce $R_6$, while if $\eta\beta=1$ the contradiction comes by~(\ref{Cperissmall}) defining the constant
\begin{equation}\label{ceeb5}
\Ceeb^5=\frac{\delta' h_{\rm min}}{3\cdot 2^\beta C_1 C_2\Cgr^\beta}\,,
\end{equation}
which again only depends on $h,\, A'$ and $\E$. The proof is then concluded.
\end{proof}

We can conclude this section by giving the definition of triple points and showing that there are only finitely many of them.

\begin{defn}[Triple points]\label{def3p}
We say that $x\in\R^2$ is a \emph{triple point} if
\[
\lim_{r\searrow 0}\ \# \Big\{ 0\leq i \leq m:\, \big| E_i\cap B(x,r)\big|>0\Big\} = 3\,.
\]
Notice that, by Lemma~\ref{atmost3}, the no-islands Lemma~\ref{noisland} and the no-lakes Lemma~\ref{nolakesgen}, this is equivalent to ask that the above limit is at least $3$.
\end{defn}

\begin{lem}[Finitely many triple points]\label{dist>R6}
Let $B(x,r)\subseteq \textsf{D}$ be a ball with $r<R_6$ and being $x$ a triple point. Then, there exists $r/C_2\leq \rho\leq r$ such that $\partial\E\cap B(x,\rho)$ consists of three paths of finite length, connecting $\partial B(x,\rho)$ with $x$, disjoint except at $x$. As a consequence, there is no other triple point in $B(x,\rho)$, so in particular triple points in $\textsf{D}$ are locally finitely many.
\end{lem}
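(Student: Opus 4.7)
The plan is to combine Lemma~\ref{atmost3}, the no-island and (possibly refined) no-lake lemmas, and the Jordan-curve regularity of Lemma~\ref{lem1reg} and Corollary~\ref{correg}, finishing with a small cell-decomposition argument based on Lemma~\ref{noncrazyint}. I would take $R_6$ slightly smaller than in Lemma~\ref{nolakesgen}, so that additionally no connected component of $E_0$ is compactly contained in $B(x,\rho)$ for any $\rho\le r$ (it suffices to replace the previous $R_6$ by $R_6/C_2$, which costs nothing). First, Lemma~\ref{atmost3} applied to $B(x,r)$ yields $r/C_2\le \rho\le r$ with $\#\bigl(\partial\E\cap\partial B(x,\rho)\bigr)\le 3$, and we may additionally require that $\rho$ is a Vol'pert radius.

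Because $x$ is a triple point, there are three distinct indices $i_1,i_2,i_3\in\{0,1,\dots,m\}$ with $|E_{i_k}\cap B(x,s)|>0$ for every small $s>0$; in particular all three intersect $B(x,\rho)$ in positive measure. For colored indices ($i_k\ge 1$), Lemma~\ref{noisland} applied with $G=B(x,\rho)$ forces $\H^1(E_{i_k}\cap\partial B(x,\rho))>0$, so a component of $E_{i_k}\cap B(x,\rho)$ with $x$ in its closure reaches $\partial B(x,\rho)$. If some $i_k=0$, the choice of $R_6$ together with Lemma~\ref{nolakesgen} forbids the analogous component of $E_0\cap B(x,\rho)$ from being compactly contained in $B(x,\rho)$, so again it must reach the boundary circle. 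Since three distinct regions touch $\partial B(x,\rho)$ and at most three points of $\partial\E$ lie there, the intersection consists of exactly three points $A,B,C$, dividing $\partial B(x,\rho)$ into three arcs, each entirely in one of the three chosen regions.

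Now Lemma~\ref{lem1reg} (for the colored ones) and Corollary~\ref{correg} (if $i_k=0$) describe the boundary of each of these three components as a Jordan curve in $\overline{B(x,\rho)}$. Any two of these three Jordan curves $\gamma_1,\gamma_2,\gamma_3$ share a common arc (because the corresponding two regions have common boundary), and Lemma~\ref{noncrazyint} forces this common arc to be connected. Each shared arc runs from one of the points $A,B,C$ into the interior of the ball and terminates at an interior endpoint; the Jordan-curve condition applied to, say, $\gamma_1$ (whose interior part consists of the arc $\gamma_1\cap\gamma_2$ followed by $\gamma_1\cap\gamma_3$) forces the interior endpoints of $\gamma_1\cap\gamma_2$ and of $\gamma_1\cap\gamma_3$ to coincide, and similarly for the other pair, so all three shared arcs meet at a single interior point $Y$. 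This gives the tripod structure: three injective paths of finite length joining $A$, $B$, $C$ to $Y$ inside $B(x,\rho)$, disjoint except at $Y$. Finally, $Y$ must equal $x$: indeed $Y$ is the only point of $B(x,\rho)$ where three different regions meet, while by Definition~\ref{def3p} three different regions meet at $x$.

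The same picture also gives the finiteness statement: for every $y\in B(x,\rho)\setminus\{x\}$, either $y$ lies in the interior of one of the three regions (one region accumulates at $y$) or on one of the three open subarcs (exactly two regions accumulate at $y$), so no $y\ne x$ in $B(x,\rho)$ is a triple point. Since $D$ can be covered by a compact family of such balls, the set of triple points in $D$ is locally finite. The main obstacle is the topological argument closing up the three shared arcs at a single interior point; this is where Lemma~\ref{noncrazyint} is essential, since without the connectedness of the pairwise intersections the cell decomposition of $B(x,\rho)$ into three regions with only three points on the boundary would not be well-defined.
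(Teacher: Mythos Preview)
Your proposal is correct and follows essentially the same route as the paper: apply Lemma~\ref{atmost3} to get $\rho$, use no-islands/no-lakes to force exactly three regions each reaching $\partial B(x,\rho)$, invoke Lemma~\ref{lem1reg}/Corollary~\ref{correg} for the Jordan boundaries, and use Lemma~\ref{noncrazyint} to show the pairwise intersections are connected arcs meeting at a single interior point, which must then be $x$. The paper is slightly more explicit on one point you compress, namely that each $E_{i_k}\cap B(x,\rho)$ consists of a \emph{single} connected component (argued by counting: each of $A,B,C$ is endpoint of at most two boundary paths since $\rho$ is a Lebesgue radius for $s\mapsto\H^0(\partial\E\cap\partial B(x,s))$), which is what guarantees the tripod exhausts all of $\partial\E\cap B(x,\rho)$ and hence that no other triple point can lie in the ball.
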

\begin{proof}
First of all, we can apply Lemma~\ref{atmost3} to find $r/C_2\leq \rho\leq r$ such that Vol'pert Theorem~\ref{volpert} holds true for $B(x,\rho)$, $\rho$ is a Lebesgue point of the function $s\mapsto \haus^0(\partial\E\cap \partial B(x,s))$, and the value of this function at $\rho$ is at most $3$. Vol'pert Theorem implies that $\partial B(x,\rho)$ is the essentially disjoint union of either one, or two, or three open arcs belonging to different sets $E_i$. The no-islands Lemma~\ref{noisland} and the no-lakes Lemma~\ref{nolakesgen} ensure that $|E_i\cap B(x,\rho)|>0$ if and only if one of the above open arcs belong to $E_i$. Since $x$ is a triple point, we deduce that $\partial\E\cap \partial B(x,\rho)$ consists of exactly three points, call them $A,\, B,\, C$ for simplicity, that there are three distinct indices $0\leq j_1,\,j_2,\, j_3\leq m$ such that the arcs $AB,\, BC,\, CA$ of $\partial B(x,\rho)$ belong to $E_{j_1},\, E_{j_2},\, E_{j_3}$ respectively, and that $|E_i\cap B(x,\rho)|>0$ if and only if $i\in \{j_1,\,j_2,\, j_3\}$.\par

Lemma~\ref{lem1reg} and Corollary~\ref{correg} imply that each connected component of $E_i\cap B(x,\rho)$ has a boundary which is an injective, closed curve of finite length, and such a curve must reach $\partial B(x,\rho)$ by Lemma~\ref{noisland} and Lemma~\ref{nolakesgen}. By construction, each of the points $A,\,B$ and $C$ can be contained in at most two different curves among the boundaries of the connected components of $E_i\cap B(x,\rho)$. Therefore, for each $i\in \{j_1,\,j_2,\,j_3\}$ we have that $E_i\cap B(x,\rho)$ is made by a single connected component, and the boundary of such a connected component is the union of an arc of $\partial B(x,\rho)$ (in particular one between $AB,\,BC$ and $CA$) and a path contained in the interior of $B(x,\rho)$. We call $\psi_1,\,\psi_2$ and $\psi_3$ these three arcs, and we set $\tau_1=\psi_1\cap\psi_2,\, \tau_2 = \psi_2\cap\psi_3$ and $\tau_3=\psi_3\cap\psi_1$.\par

Keep in mind that $\haus^1$-a.e.\ point of $\partial \E$ belongs to exactly two different boundaries $\partial E_\ell$, with $0\leq \ell\leq m$, hence in particular $\haus^1$-a.e.\ point of $\partial\E\cap B(x,\rho)$ belongs to exactly two of the paths $\psi_1,\,\psi_2$ and $\psi_3$, that is, to one of the intersections $\tau_1,\,\tau_2,\,\tau_3$. However, Lemma~\ref{noncrazyint} implies that $\tau_1$ is an injective closed path, which is a common subpath of $\psi_1$ and $\psi_2$, and the same is true for $\tau_2$ and $\tau_3$. In other words, $\psi_1$ is the essentially disjoint union of the connected paths $\tau_1$ and $\tau_3$, $\psi_2$ is the essentially disjoint union of $\tau_1$ and $\tau_2$, and $\psi_3$ is the essentially disjoint union of $\tau_2$ and $\tau_3$. The three paths $\tau_1,\,\tau_2$ and $\tau_3$ meet then at some point $y\in B(x,\rho)$. Hence, we have proved that $\partial\E\cap B(x,\rho)$ is the union of the three paths $\tau_1,\,\tau_2$ and $\tau_3$, and these three paths connect the points $A,\,B,\,C \in \partial B(x,\rho)$ with the internal point $y$, and they are disjoint except for the common point $y$. Consequently, every point of $B(x,\rho)$ different from $y$ is \emph{not} a triple point, and this ensures that $y=x$ and concludes the proof.
\end{proof}

\subsection{Interface regularity\label{s:interface}}

This section is devoted to show the regularity of the boundary of the optimal cluster $\E$ away from the triple points, that is, where there are only two different sets. In this case, we show that $\partial\E$ is done by a union of regular curves. In particular, the goal of this section is to obtain the following result. 

\begin{prop}[${\rm C}^{1,\gamma}$ regularity]\label{step6}
There exists an increasing function $\xi:\R^+\to\R^+$ with $\lim_{r\to 0^+} \xi(r)=0$ with the following property. If $B(x,\bar r)\subseteq \textsf{D}$ is a ball with $\bar r<R_6$ and $\#\, \big(\partial \E\cap \partial B(x,\bar r)\big)=2$, then $\partial\E\cap B(x,\bar r)$ is a ${\rm C}^1$ curve of finite length having both endpoints in $\partial B(x,\bar r)$. Moreover, calling $\tau(y)\in\P^1$ the direction of the tangent vector at any $y\in\partial\E\cap B(x,\bar r)$, one has
\begin{equation}\label{uniformC1}
|\tau(y)-\tau(z)| \leq \xi(|y-z|)
\end{equation}
for every $y,\,z\in \partial\E\cap B(x,\bar r)$. Finally, if $\eta\beta>1$ and $h$ is locally $\alpha$-H\"older in the first variable, then it is possible to take $\xi(r)=K r^\gamma$ with some $K=K(\E,\, \textsf{D},\, g,\,h)>0$ and
\begin{equation}\label{heregamma}
\gamma=\frac 12\, \min\{\eta\beta-1,\, \alpha\}\,,
\end{equation}
so that in particular $\partial\E\cap B(x,\bar r)$ is ${\rm C}^{1,\gamma}$.
\end{prop}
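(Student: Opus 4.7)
The plan is to proceed in four stages. First, I would use the results of Section~\ref{triplepoints} to show that $\partial\E\cap B(x,\bar r)$ is a single simple curve separating two fixed chambers. Since $\bar r<R_6$ and $\partial B(x,\bar r)$ meets $\partial\E$ in exactly two points, Lemma~\ref{dist>R6} implies that no triple point can lie in $B(x,\bar r)$: if some $y\in B(x,\bar r)$ were a triple point, the three paths emanating from it, combined with Lemma~\ref{noncrazyint} and Vol'pert's Theorem~\ref{volpert}, would force at least three points in $\partial\E\cap \partial B(x,\bar r)$, a contradiction. Combining this with Lemmas~\ref{lem1reg}, \ref{noncrazyint}, Corollary~\ref{correg}, and the no-island/no-lake Lemmas~\ref{noisland} and~\ref{nolakesgen}, I conclude that $\partial\E\cap B(x,\bar r)$ is a single injective $C^0$ curve $\Gamma$ of finite length, whose two endpoints lie on $\partial B(x,\bar r)$, and that $\Gamma=\partial^* E_i\cap \partial^* E_j\cap B(x,\bar r)$ for two fixed indices $0\le i,j\le m$, $i\ne j$.

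Second, I would establish a quantitative almost-minimality for $\Gamma$. Fix $y\in \Gamma$ and $r>0$ with $B(y,r)\comp B(x,\bar r)$, and let $F$ be any set of finite perimeter with $F\Delta E_i\comp B(y,r)$. Using $F$ to modify $E_i$ (and adjusting $E_j$ accordingly) produces a competitor cluster which, via Lemma~\ref{labello}, can be recalibrated to have $|\E|=|\E''|$ at a perimeter cost of at most $\Ceeb[|\eps|](2\Cgr r^\eta)^\beta$. Combining with minimality of $\E$ and freezing the density at $y$, i.e.\ setting $\overline h(\nu):=h(y,\nu)$, one obtains the anisotropic almost-minimality
\begin{equation*}
\overline P(E_i;B(y,r))\le \overline P(F;B(y,r))+r\cdot \Lambda(r),\qquad \Lambda(r):=C\bigl(\Ceeb[\Cgr r^\eta]\,r^{\eta\beta-1}+\omega(r)\bigr),
\end{equation*}
where the $\omega(r)$-term comes from replacing $h$ by $\overline h$ over the boundary portions inside $B(y,r)$, which have length controlled by a multiple of $r$ by Lemma~\ref{13/2}.

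Third, I would invoke the standard anisotropic regularity theory for $(\Lambda,r_0)$-almost-minimizers of a constant-coefficient perimeter $\overline P$; see \cite{Tam,Mag}. The strict convexity and the uniform roundedness~(\ref{numero}) of $h(y,\cdot)$ ensure that the unit ball $\C(y)$ is uniformly elliptic, and the ${\rm C}^1$ regularity gives a well-defined Cahn--Hoffman normal. The outcome is an excess decay of the form
\begin{equation*}
\mathrm{exc}(y,\rho)\le \theta\,\mathrm{exc}(y,r)+C\Lambda(r)
\end{equation*}
for $\rho=r/K$ and some $\theta<1$, $K>1$ independent of $r$. Iterating this on a geometric sequence of radii yields, in case $(i)$, the bound $\mathrm{exc}(y,\rho)\lesssim \rho^{\min\{\eta\beta-1,\alpha\}}$ when $\omega(r)\lesssim r^\alpha$, and in case $(ii)$ a summable series thanks to the $1/2$-Dini assumption on both $\omega_h$ and $t\mapsto\Ceeb[t]$. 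Translating excess decay into uniform control of $\nu_{E_i}$ (and hence of the tangent direction) via the standard Campanato-type argument costs a square root, yielding~(\ref{uniformC1}) with $\xi(t)\to 0$ in case $(ii)$, and $\xi(t)=Kt^\gamma$ for the exponent $\gamma$ in~(\ref{heregamma}) in case $(i)$.

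The main obstacle I anticipate is the borderline case $\eta\beta=1$ together with the non-symmetric anisotropy. There, the constant-coefficient almost-minimality bound only yields $\Lambda(r)=O(\Ceeb[\Cgr r^\eta]+\omega(r))$ with no power gain, so the iteration requires the full strength of the $1/2$-Dini control and care must be taken because the chamber on the two sides of $\Gamma$ may behave differently (one being the exterior $E_0$). Rigorously adapting the classical flat-improvement lemma to this anisotropic, non-symmetric setting, and checking that both error sources (volume correction via $\Ceeb$ and freezing of $h$ via $\omega$) combine coherently under $1/2$-Dini summability, is the delicate technical step.
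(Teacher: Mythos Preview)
Your first stage is fine and matches the paper. The divergence is in stages~2--3: you reduce to a $(\Lambda,r_0)$-almost-minimality statement and then invoke the black-box excess-decay machinery of \cite{Tam,Mag}, whereas the paper proceeds by an elementary, fully self-contained chord-replacement argument (Lemma~\ref{aa2bp} and Corollary~\ref{cor1pg}). Concretely, the paper takes a ball $B(x,r)$ whose boundary meets $\partial\E$ in exactly two points $a,b$, replaces the arc by the chord $ab$, and uses the uniform roundedness estimate~(\ref{numero}) directly to obtain $\len(ay)+\len(yb)-\len(ab)\ge c'r\sin^2\theta$ for any interior boundary point $y$ with $\theta=\angle yab$; combined with Lemma~\ref{labello} this yields $\theta\le\xi_1(r)\approx\sqrt{\omega(r)+\Ceeb[2\Cgr r^\eta]\,r^{\eta\beta-1}}$. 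One then defines $\tau(y,r)$ as the direction of $ab$ for a suitable radius produced by Lemma~\ref{atmost3}, and iterates the bound over dyadic scales, with the $1/2$-Dini assumptions ensuring summability of the series $\sum_n\xi_1(2C_2r/2^n)$. The square root arises exactly once, in passing from the quadratic gain $\sin^2\theta$ to $\theta$, giving~(\ref{heregamma}).

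Your route would also work in principle and yields the same exponent, but the obstacle you flag is real: the references you cite do not cover non-symmetric anisotropies off the shelf, and the frozen density $\overline h=h(y,\cdot)$ changes with the basepoint, so the excess-decay iteration must be set up with the correct error bookkeeping. Moreover the case distinction between a colored/colored interface (symmetrized $\tilde\h$) and a colored/white interface (one-sided $\h$) has to be fed into the almost-minimality inequality explicitly, as the paper does in Lemma~\ref{aa2bp}. The paper's approach sidesteps all of this by never leaving the elementary 2D chord-versus-arc comparison; it trades generality for a short, closed argument that uses only~(\ref{numero}) and the Dini hypotheses, with no appeal to Campanato spaces or a flatness-improvement lemma.
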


The first step is to show that in a small ball the boundary is close to a line.

\begin{lem}[Almost alignment in a circle]\label{aa2bp}
There exists an increasing function $\xi_1:\R^+\to\R^+$ with $\lim_{r\to 0^+} \xi_1(r)=0$, which satisfies the Dini property, and which can be taken of the form $\xi(r)=Kr^\gamma$ with some $K=K(\E,\, \textsf{D},\, g,\,h)>0$ and $\gamma$ given by~(\ref{heregamma}) as soon as $\eta\beta>1$ and $h$ is locally $\alpha$-H\"older in the first variable, so that the following holds. Let $x\in \partial\E$ and $r<R_6$ be such that $B(x,r)\subseteq \textsf{D}$ and $\partial B(x,r) \cap \partial\E$ consists of two points, call them $a$ and $b$. Then, for every $y \in \partial\E\cap B(x,r/2)$ one has $|\angle yab| \leq \xi_1(r)$.
\end{lem}
\begin{proof}
We directly define $\E'$ as the cluster which coincides with $\E$ outside of $B(x,r)$ and such that $\partial\E'\cap B(x,r)$ is done by the segment $ab$. Keep in mind that, since $\partial\E\cap \partial B(x,r)$ is done by two points, Lemma~\ref{noisland} and Lemma~\ref{nolakesgen} imply that $B(x,r)$ is the union of two connected regions, each one contained in a set $E_i$ for some $0\leq i\leq m$. By Lemma~\ref{lem1reg} and Corollary~\ref{correg}, we know that the common boundary between these regions, which coincides with the whole $\partial\E\cap B(x,r)$, is a path $\gamma$ contained in $B(x,r)$ and connecting $a$ to $b$. Since $y\in\partial\E$, in particular $y\in\gamma$. Let us call $i_l$ and $i_r$ the two indices in $\{0,\, 1 ,\,\dots\,,\, m\}$ so that the set $E_{i_l}$ (resp., $E_{i_r}$) is on the left side of $\gamma$ (resp., on the right side).\par

Once again, for every $\nu\in\S^1$ we call $\hat\nu$ the angle obtained rotating $\nu$ of $90^\circ$ clockwise. This time, we set $\h(\nu)=h(x,\hat\nu)$ if $i_r=0$, $\h(\nu)=h(x,-\hat\nu)$ if $i_l=0$, and $\h(\nu)=(h(x,\hat\nu)+h(x,-\hat\nu))/2$ if both the indices $i_l$ and $i_r$ are different from $0$. We use then~(\ref{lengthcurves}) to define the length of curves with this choice of $\h$. Also by Lemma~\ref{13/2} and Lemma~\ref{lemmagiorgio}, we have
\begin{equation}\label{rsi}\begin{split}
P(\E')-P(\E) &\leq \len(ab) +\omega(r) |b-a| - \len(\gamma) +\omega(r) \haus^1(\gamma)\\
&\leq \len(ab) - \len(ay)-\len(yb) +\bigg(2 +7\,\frac{h_{\rm max}}{h_{\rm min}}\bigg) r \omega(r)\,.
\end{split}\end{equation}
Let us now write for brevity $\theta=\angle yab$. We claim that
\begin{equation}\label{thisest}
\len(ay)+\len(yb)-\len(ab) \geq c' r \sin^2 \theta\,,
\end{equation}
for a constant $c'$ which only depends on $h$ and $\textsf{D}$. To show this estimate, we call $y_\perp$ the projection of $y$ on $ab$. First of all, we can reduce ourselves to the ``symmetric'' case when $y_\perp$ is the middle point of $ab$. Indeed, assume that $y_\perp$ is not the middle point of $ab$ and let $a'b'$ the shortest segment containing $ab$ with middle point equal to $y_\perp$. If~(\ref{thisest}) holds true in the symmetric case, then in particular it holds true with $a',\,b'$ and $\theta'=\angle y{a'}{b'}$ in place of $a,\,b$ and $\theta$. Moreover, since $r/2 \leq |y-a|,\, |y-b| \leq 3r/2$, then $\sin\theta' \geq \sin\theta/3$. Since the triangular inequality implies
\[
\len(ay)+\len(yb)-\len(ab) \geq 
\len(a'y)+\len(yb')-\len(a'b')\,,
\]
we have then the validity of~(\ref{thisest}) in the general case, up to divide $c'$ by $9$. We are then left to show~(\ref{thisest}) in the symmetric case. Let us call
\begin{align*}
\nu=\frac{y_\perp-a}{|y_\perp-a|} \,, && w=\frac{y-y_\perp}{4|y_\perp-a|}\,,
\end{align*}
so that by construction $|\nu|=1$ and $|w|\leq 1$. Keep in mind that $\h$ is uniform round in the second variable (see Definition~\ref{strictconv}), and then~(\ref{numero}) and the convexity give
\[\begin{split}
\len(ay)+\len(yb)&-\len(ab)=|y_\perp-a| \big(\h(\nu+4w)+\h(\nu-4w) -2\h(\nu)\big) \\
&\geq \frac r2\, \big(\h(\nu+w)+\h(\nu-w) -2\h(\nu)\big) 
\geq r c |w|^2\geq \frac c{16}\, r \sin\theta^2\,,
\end{split}\]
and this proves~(\ref{thisest}). Inserting this inequality in~(\ref{rsi}), we obtain
\[
P(\E')-P(\E) \leq -c'r \sin^2 \theta +\bigg(2 +7\,\frac{h_{\rm max}}{h_{\rm min}}\bigg)r \omega(r)\,.
\]
Observe that $\big||\E'|-|\E|\big|\leq 2|B(x,r)| \leq 2\Cgr r^\eta$, and write for brevity $\widetilde\Ceeb = \Ceeb [ 2\Cgr r^\eta]$. Appling then once again Lemma~\ref{labello}, using the constant $\widetilde\Ceeb$ in place of $\Ceeb$ in~(\ref{proplab}), the optimality of $\E$ implies that
\[
c' r \sin^2 \theta \leq \bigg(2 +7\,\frac{h_{\rm max}}{h_{\rm min}}\bigg)r \omega(r) + \widetilde\Ceeb (2\Cgr r^\eta)^\beta \,,
\]
which implies
\[
\theta\leq\xi_1(r):= \frac \pi 2\, \bigg[\frac 1{c'}\bigg(\bigg(2 +7\,\frac{h_{\rm max}}{h_{\rm min}}\bigg) \omega(r) + \widetilde\Ceeb 2^\beta\Cgr^\beta r^{\eta\beta-1} \bigg)\bigg]^{1/2}\,.
\]
To conclude the proof, we have then to check that $\xi_1$ satisifies all the requirements. Keeping in mind that $\widetilde\Ceeb=\Ceeb[2\Cgr r^\eta]$ is an increasing function of $r$ which goes to $0$ when $r\searrow 0$, the fact that $\xi_1$ is an increasing function and that $\lim_{r\searrow 0} \xi_1(r)=0$ is true by construction. If $\eta\beta>1$ and $h$ is $\alpha$-H\"older in the first variable, then we obtain
\[
\xi_1(r) \lesssim \sqrt{r^\alpha + r^{\eta\beta-1}}\approx r^\gamma\,,
\]
with $\gamma$ given by~(\ref{heregamma}). Finally, up to multiplicative constants we have that
\[
\xi_1(r) \leq \sqrt{\omega(r) + r^{\eta\beta-1}\Ceeb[2\Cgr r^\eta]}
\leq \sqrt{\omega(r)} + \sqrt{r^{\eta\beta-1}\Ceeb[2\Cgr r]}\,,
\]
where the last inequality comes because $\eta\geq 1$. The Dini property of $\xi_1$ then follows, since $r\mapsto \omega(r)$ satisfies the $1/2$-Dini property, and the same is true for $r\mapsto\Ceeb[r]$ if $\eta\beta=1$.
\end{proof}

\begin{cor}\label{cor1pg}
For every $x\in \textsf{D}\cap \partial\E$ and every $r< \min\{{\rm dist}(x,\partial \textsf{D}),R_6\}/2C_2$ with the property that $\#\big\{ 0\leq i \leq m:\,|B(x,r)\cap E_i|>0\big\}=2$, one can choose a direction, that we call $\tau(x,r)\in\P^1$, so that each point $y\in\partial\E\cap\partial B(x,r)$ satisfies
\begin{equation}\label{firstone}
|\zeta(y-x)- \tau(x,r)| \leq 24 C_2 \xi_1(2C_2 r)
\end{equation}
where, for each $v\in\R^2\setminus\{0\}$, we denote by $\zeta(v)\in\P^1$ the direction of $v$. In particular, if $w,\,z\in\partial\E$ are two points such that, calling $d=|w-z|$, both $\tau(w,d)$ and $\tau(z,d)$ are defined, then
\begin{equation}\label{thirdone}
|\tau(w,d) - \tau(z,d) | \leq 48 C_2 \xi_1(2C_2 d)\,.
\end{equation}
In addition, for each $r' \in [r/2,r]$ one has
\begin{equation}\label{secondone}
|\tau(x,r)-\tau(x,r')| \leq 48 C_2 \xi_1(2C_2 r)\,.
\end{equation}

\end{cor}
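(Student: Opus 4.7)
The plan is to define $\tau(x,r)$ by applying Lemma~\ref{aa2bp} to a ball of radius in the range $[2r,2C_2 r]$ centered at $x$ whose boundary meets $\partial\E$ in exactly two points. Concretely, I invoke Lemma~\ref{atmost3} on $B(x,2C_2 r)$ (which lies in $D$ and has radius below $R_5$, since $R_6\leq R_5$ and $r<R_6/(2C_2)$) to obtain some $\rho\in[2r,2C_2 r]$ with $\#\bigl(\partial B(x,\rho)\cap\partial\E\bigr)\leq 3$. Using the two-color hypothesis on $B(x,r)\subseteq B(x,\rho)$, combined with the no-island and no-lake results (Lemmas~\ref{noisland} and~\ref{nolakesgen}) and a parity argument via Vol'pert's Theorem~\ref{volpert}, one selects $\rho$ so that this intersection consists of exactly two points $a,b$, and then sets $\tau(x,r):=\zeta(b-a)\in\P^1$.

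With this choice, Lemma~\ref{aa2bp} yields $|\angle yab|\leq\xi_1(\rho)\leq\xi_1(2C_2 r)$ for every $y\in\partial\E\cap B(x,\rho/2)$. Since $\rho\geq 2r$, the closed ball $\overline{B(x,r)}$ is contained in $B(x,\rho/2)$ (up to a boundary correction that can be absorbed by taking $\rho$ slightly larger than $2r$), so this angle bound applies both at $y=x$ (so that $x$ itself is close to the chord $ab$) and at every $y\in\partial\E\cap\partial B(x,r)$. Geometrically, each such point lies within distance $|{\cdot}-a|\,\xi_1(\rho)\leq 3C_2 r\,\xi_1(2C_2 r)$ of the line through $a$ and $b$, so the triangle inequality bounds the component of $y-x$ perpendicular to $\tau(x,r)$ by $5C_2 r\,\xi_1(2C_2 r)$. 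Dividing by $|y-x|=r$ and passing from sine to angle then yields~\eqref{firstone} with ample room in the constant~$24C_2$.

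The remaining inequalities follow quickly by triangle inequalities. For~\eqref{thirdone}, note that $z\in\partial\E\cap\partial B(w,d)$ and $w\in\partial\E\cap\partial B(z,d)$ with $d=|w-z|$, so applying~\eqref{firstone} to both $B(w,d)$ and $B(z,d)$ and using the identification $\zeta(z-w)=\zeta(w-z)$ in $\P^1$ gives the claim. For~\eqref{secondone}, I pick $y'\in\partial\E\cap\partial B(x,r')$ (which exists because $B(x,r')$ still has exactly two colors); applying~\eqref{firstone} at radius $r'$ controls $|\zeta(y'-x)-\tau(x,r')|$, while rerunning the perpendicular-component computation of the previous paragraph with $y=y'$ (and absorbing the factor $r/r'\leq 2$ arising in the denominator into the constants) controls $|\zeta(y'-x)-\tau(x,r)|$ by $24C_2\,\xi_1(2C_2 r)$; a final triangle inequality produces~\eqref{secondone}.

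The principal technical subtlety is the very first step, namely the selection of $\rho$ with $\#\bigl(\partial B(x,\rho)\cap\partial\E\bigr)=2$: this is where the two-color hypothesis on $B(x,r)$ plays an indispensable role, since it must be used to rule out the three-point alternative left open by Lemma~\ref{atmost3} when applied to the larger ball. Everything afterwards is fairly routine elementary geometry.
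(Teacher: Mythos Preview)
Your argument is essentially the paper's own: apply Lemma~\ref{atmost3} at scale $2C_2 r$ to get $\rho\in[2r,2C_2 r]$, rule out the three-point alternative, set $\tau(x,r)=\zeta(b-a)$, and then use Lemma~\ref{aa2bp} at both $x$ and $y$ to bound the perpendicular offset of $y-x$ from the chord and divide by $|y-x|\geq r/2$ (the paper records this as the auxiliary estimate~\eqref{funda}, valid on the full annulus $\overline{B(x,r)}\setminus B(x,r/2)$, which is exactly your ``rerunning'' step for~\eqref{secondone}). Your flagging of the two-versus-three selection as the crux is apt; the paper handles it in one line by reading the two-color hypothesis as holding on $B(x,2C_2 r)$ rather than just $B(x,r)$, which is in fact what is available in every subsequent use of the corollary.
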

\begin{proof}
Since by assumption the ball $B(x,2C_2 r)$ is contained in $\textsf{D}$ and its radius is less than $R_6\leq R_5$, we apply Lemma~\ref{atmost3} and find some $2r < \rho< 2C_2 r$ such that $\partial\E\cap \partial B(x,\rho)$ has at most three points. These points cannot be three since by assumption $B(x,\rho)\subseteq B(x,2C_2 r)$ intersects only two regions $E_i$, $0\leq i \leq m$, and they cannot be less than $2$ because $x\in\partial\E$ and by the no-islands Lemma~\ref{noisland} and the no-lakes Lemma~\ref{nolakesgen}. Therefore, $\partial\E\cap \partial B(x,\rho)$ has necessarily exactly two points, and we call them $a$ and $b$ for simplicity. The vector $\tau(x,r)\in\P^1$ can be then simply defined as the direction $\zeta(b-a)$ of the segment $ab$. Notice that this direction is not uniquely determined by $x$ and $r$, since it also depends on the particular choice of $\rho$.\par
To check the properties of $\tau$, let us take $y\in \partial B(x,r')\cap\partial\E$ for some $r/2\leq r'\leq r$. Just to fix the ideas, let us call $w_2$ the second coordinate of any point $w\in\R^2$, and let us assume that the segment $ab$ is horizontal, with $a_2=b_2=0$. Since both $y$ and $x$ belong to $\partial\E\cap B(x,\rho/2)$, Lemma~\ref{aa2bp} gives $|\angle xab|< \xi_1(\rho)$ and $|\angle yab|<\xi_1(\rho)$, which implies
\begin{align*}
|x_2| = |x-a| |\sin \angle xab| \leq \rho \sin \xi_1(\rho)\,, &&
|y_2| = |y-a| |\sin \angle yab| \leq 2 \rho \sin \xi_1(\rho)\,,
\end{align*}
and then
\[
|\sin \zeta(y-x)| =\frac{|y_2-x_2|}{|y-x|} \leq \frac{3 \rho\sin \xi_1(\rho)}{r'} \leq 12 C_2 \sin \xi_1(\rho)\leq 12 C_2 \sin \xi_1(2C_2 r)\,.
\]
Summarizing, since for every $0\leq \theta\leq \pi/2$ we have $\theta\geq\sin\theta\geq 2\theta/\pi\geq \theta/2$, and since $\tau(x,r)$ is the horizontal direction, we have proved that
\begin{equation}\label{funda}
|\zeta(y-x)-\tau(x,r)| \leq 24 C_2 \xi_1(2C_2 r) \qquad\qquad \forall \, y \in \partial\E\cap \Big(\overline{B(x,r)}\setminus B(x,r/2)\Big)\,.
\end{equation}
The particular case in which $y\in\partial B(x,r)$ is~(\ref{firstone}). Let now $r/2\leq r'\leq r$, and take a point $y\in \partial\E\cap\partial B(x,r')$. Since we can apply~(\ref{funda}) to $x$ and $y$ both with $r$ and with $r'$, we deduce
\[
|\tau(x,r)-\tau(x,r')| \leq |\zeta(y-x)-\tau(x,r)| + |\zeta(y-x)-\tau(x,r')|\leq 48 C_2 \xi_1(2C_2 r)\,,
\]
which is~(\ref{secondone}). Finally, let $z,\,w \in \partial \E \cap \partial B(x,r)$ be such that, calling $d=|w-z|$, one has $2C_2 r < \min\big\{{\rm dist} (z,\partial \textsf{D}),\, {\rm dist}(w,\partial \textsf{D}), R_6\big\}$, so that both $\tau(z,d)$ and $\tau(w,d)$ are defined. Then, we can apply~(\ref{funda}) with $r=d,\, x=z,\, y=w$, and also with $r=d,\, x=w,\, y=z$. This gives then~(\ref{thirdone}).
\end{proof}

We are now in a position to prove Proposition~\ref{step6}.

\begin{proof}[Proof (of Proposition~\ref{step6}).]
Let $x$ and $\bar r$ be as in the claim of the proposition. The fact that $\partial\E\cap B(x,\bar r)$ is an injective curve of finite length with both endpoints in $\partial B(x,\bar r)$ has already been observed in Lemma~\ref{aa2bp}. By Corollary~\ref{cor1pg}, a direction $\tau(y,r)$ is defined for every $y\in \partial\E\cap B(x,\bar r)$ and every $r<\min\{{\rm dist} (y,\partial \textsf{D}),R_6\}/2C_2$. Moreover, (\ref{secondone}) holds true as soon as $r/2\leq r'\leq r$. An obvious induction gives then, for every $n\in\N$ and every $r/2^n\leq r'\leq r$,
\begin{equation}\label{induc}
|\tau(y,r)-\tau(y,r')| \leq 48 C_2 \sum_{i=0}^{n-1} \xi_1(2C_2 r/2^i)\,.
\end{equation}
We define then
\[
\xi(r) = 144 C_2 \sum_{i=0}^{+\infty} \xi_1(2C_2 r/2^i)\,.
\]
Notice that the series converges because $\xi_1$ satisfies the Dini property. In particular, if $\eta\beta>1$ and $h$ is locally $\alpha$-H\"older in the first variable, then $\xi_1(r)=K_1 r^\gamma$, with $\gamma$ given by~(\ref{heregamma}) and $K_1$ being a constant depending on $\E,\, \textsf{D},\, g$ and $h$. Thus, also $\xi(r)=K r^\gamma$ by definition.\par

By~(\ref{induc}) we obtain that $\tau(y,r)$ converges to a direction when $r\searrow 0$, and we call $\tau(y)\in\P^1$ this limit direction. By construction, $|\tau(y,r)-\tau(y)|\leq \xi(r)/3$. Therefore, for every point $z\in \partial\E\cap \partial B(y,r)$, recalling~(\ref{firstone}) we have then $|\zeta(z-y)-\tau(y)|\leq \xi(r)/2$, so that $\tau(y)$ is the tangent vector at $y$ of the curve $\partial\E\cap B(x,\bar r)$. Finally, (\ref{uniformC1}) comes by~(\ref{thirdone}).
\end{proof}

\subsection{Conclusion}

In this short last section we can now give the proof of Theorem~\mref{main}, which basically consists in putting together the technical results of the preceding sections.

\begin{proof}[Proof of Theorem~\mref{main}]%\proofof{Theorem~\mref{main}}
Let $\E\subseteq\R^2$ be a minimal cluster, and let us fix two large, closed balls $\textsf{D}^-\comp \textsf{D}\subseteq\R^2$. Let $x$ be any point in $\textsf{D}^- \cap \partial\E$. If $x$ is not a triple point, by Lemma~\ref{atmost3} there is a small constant $r(x)<R_6$ such that $\partial\E\cap\partial B(x,r(x))$ consists of two points (the points cannot be three if $r(x)$ is small enough, as already noticed). By Proposition~\ref{step6} we have then that $\partial\E\cap\partial B(x,r(x))$ is a ${\rm C}^1$ curve, whose tangent vector satisfies the uniform estimate~(\ref{uniformC1}).\par

Suppose instead that $x$ is a triple point. Then, again by Lemma~\ref{atmost3}, there is a small constant $r(x)<R_6$ such that $\partial\E\cap\partial B(x,r(x))$ consist of three points, call them $a,\,b$ and $c$. Lemma~\ref{dist>R6} already gives that $\partial\E\cap B(x,r(x))$ is done by three paths of finite length, connecting $a,\, b$ and $c$ to $x$, and disjoint except for the common endpoint $x$. Let $z\neq x$ be any point of one of these paths. Since $z$ is not a triple point, by the above argument we know that $\partial\E$ is a ${\rm C}^1$ curve in a neighborhood of $z$, and the tangent vector satisfies the uniform estimate~(\ref{uniformC1}). As a consequence, the three paths are three ${\rm C}^1$ paths, and they meet at $x$ with three well-defined tangent vectors.

Summarizing, each point $x\in \partial \E \cap \textsf{D}^-$ is center of a ball, in the interior of which $\partial\E$ is given either by a single ${\rm C}^1$ curve, or by three ${\rm C}^1$ curves meeting with three tangent vectors in the center. By compactness, we can cover $\partial\E\cap \textsf{D}^-$ with finitely many such balls, and then the Steiner property of $\E$ follows. The ${\rm C}^{1,\gamma}$ regularity in the case that $h$ is locally $\alpha$-H\"older in the first variable and $\eta\beta>1$ is given by Proposition~\ref{step6}.
\end{proof}

\section{Final comments}\label{s:final}

This last section is devoted to present a couple of final comments about our result. The first observation is about the role of the ${\rm C}^1$ property to obtain that multiple points are necessarily triple points, and the second one is about the directions of the arcs at triple points.

\subsection{The importance of the ${\rm C}^1$ property to obtain triple points\label{caseL1}}

Our main result, Theorem~\mref{main}, concerns the Steiner property for minimal clusters, that is, the boundary of a minimal cluster is made by finitely many ${\rm C}^1$ arcs which meet each other in triple points. We have shown that this property is true as soon as, together with the ``correct'' growth conditions and $\eps-\eps^\beta$ property, $h$ is strictly convex, uniformly round and ${\rm C}^1$ in the second variable. As already discussed in the Introduction, the importance of the strict convexity and uniform roundedness to obtain a Steiner propery is very simple to understand. Concerning the ${\rm C}^1$ regularity of $h$, it is also clear that this is necessary to get local ${\rm C}^1$ regularity of minimal clusters. This has nothing particular to do with the fact that we deal with clusters, the very same happens even in the much simpler case of isoperimetric sets. For instance, if $h$ does not depend on the first variable, then isoperimetric sets are translations and homotheties of the unit ball of $h$, so they are exactly as regular as $h$ is. Less obvious is the role played by the ${\rm C}^1$ property of $h$ in order to get triple points. This section is devoted to show by means of an example that quadruple points may occur for a density which is strictly convex and uniformly round but not ${\rm C}^1$. In fact, as shown in~\cite{MFG}, quadruple points are the worst than can happen, that is, a minimizing cluster for a generic norm in $\R^2$ may not have multiple points where more than four arcs meet. See also~\cite{ACH,LM} and~\cite[Chapter~10.10]{M1}, where minimizing networks in $\R^N$ for uniformly convex ${\rm C}^1$ norms are considered.
%studying how many segments are allowed to meet at a point of a minimizing network. in particular in \cite{LM} it is shown that in $\R^N$,  $N+1$ segments meeting at a point, but not $N+2$, and in \cite{ACH} it is shown that in $\mathbb R^2$, norm-minimizing networks are allowed to meet in fours, although never in fives.

We start with the following weaker example, depicted in Figure~\ref{FigsecL1}, left.

\begin{example}\label{exL1}
We want to show an example of an isoperimetric cluster with a quadruple point. It is possible to find a minimal cluster $\E=(E_1,E_2,E_3,E_4)$ corresponding to two continuous densities $g$ and $h$ so that, calling $\Q=[-1,1]\times [-1,1]$:
\begin{itemize}
\item $g(x)=1$ and $h(x,\nu) = \|\nu\|_\infty=\max\{|\nu_1|,\,|\nu_2|\}$ for every $x\in \Q$ and $\nu\in\S^1$;
\item $|E_1\cap \Q|= |E_2\cap \Q|=|E_3\cap \Q|=|E_4\cap \Q|=1$;
\item each of the four intersections $E_i \cap\partial \Q$ coincides with a side of the square $\Q$.
\end{itemize}
We omit the proof of this statement, which is lengthy and technical. However, the strategy of the proof is simple and goes as follows. Given any cluster $\E_0$ (in particular, a cluster whose boundary in $\Q$ coincides with the two diagonals), it is the minimal cluster for $g_0\equiv 1$ and a discontinuous density $h_0$ with values in $[0,+\infty)$, by setting $h_0\equiv 0$ on the boundary of the cluster and $h_0\equiv 1$ outside. The searched continuous densities $g$ and $h$---which have values in $(0,+\infty)$---can then be found as carefully chosen smoothings of $g_0$ and $h_0$, in such a way that there exists a minimal cluster $\E$, not necessarily coinciding with $\E_0$ but satisfying the properties above.

In such a situation, the perimeter of the cluster $\E$ inside the square is at least $4$, and it equals $4$ if and only if the boundary of the cluster inside the square is done by the two diagonals. This must then be the case, and so the origin is a quadruple point.
\end{example}

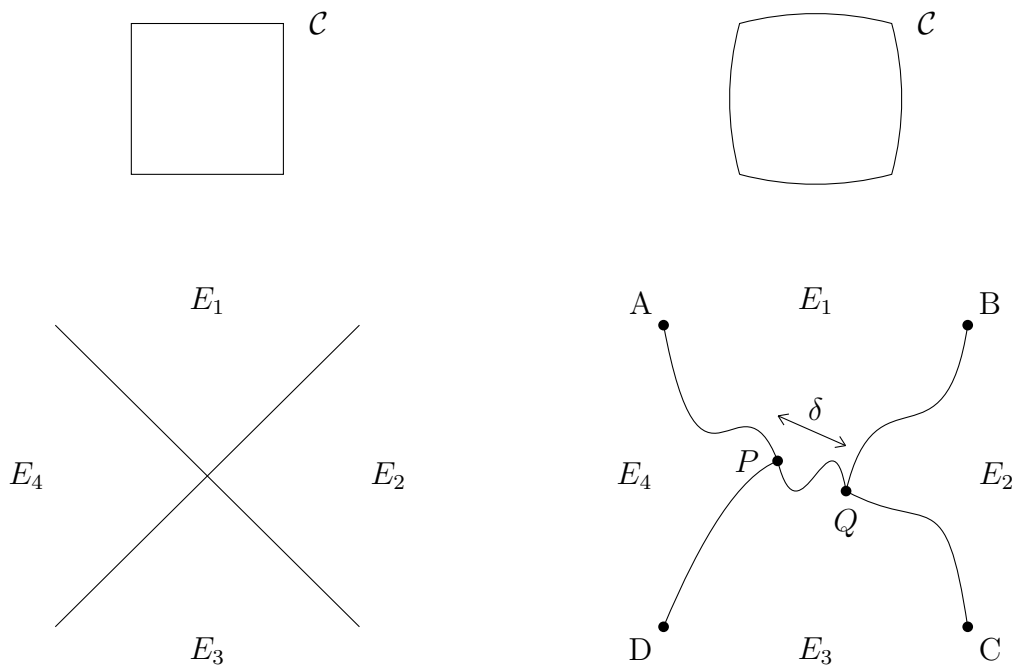
\begin{figure}[htbp]
\begin{tikzpicture}[>=>>>]
\draw (0,0) -- (2,0) -- (2,2) -- (0,2) --cycle;
\draw (2.2,2) node[anchor=west] {$\C$};
\draw (-1,-2) -- (3,-6);
\draw (3,-2) -- (-1,-6);
\draw (1,-2) node[anchor=south] {$E_1$};
\draw (3,-4) node[anchor=west] {$E_2$};
\draw (1,-6) node[anchor=north] {$E_3$};
\draw (-1,-4) node[anchor=east] {$E_4$};
%\draw (7,0) arc(-100:-80:5.76) arc(-10:10:5.76) arc(80:100:5.76) arc(170:190:5.76);
\draw (7,0) arc(-105:-75:3.864) arc(-15:15:3.864) arc(75:105:3.864) arc(165:195:3.864);
\draw (9.2,2) node[anchor=west] {$\C$};
%\draw (10,-2) -- (7,-6);
\draw (6,-2) .. controls (6.5,-4.7) and (7,-2.5) .. (7.5,-3.8);
\draw (8.4,-4.2) .. controls (9.5,-4.8) and (9.7,-4) .. (10,-6);
\fill (7.5,-3.8) circle (2pt);
\fill (8.4,-4.2) circle (2pt);
\draw[<->] (7.5,-3.2) -- (8.4,-3.6);
\draw (8,-3.4) node[anchor=south] {$\delta$};
\draw (7.4,-3.8) node[anchor=east] {$P$};
\draw (8.4,-4.3) node[anchor=north] {$Q$};
\draw (6,-6) .. controls (6.5,-4.8) and (7,-4) .. (7.5,-3.8);
\draw (8.4,-4.2) .. controls (8.8,-2.5) and (9.7,-4) .. (10,-2);
\draw (7.5,-3.8) .. controls (7.8,-5) and (8.2,-3) .. (8.4,-4.2);
\draw (8,-2) node[anchor=south] {$E_1$};
\draw (10,-4) node[anchor=west] {$E_2$};
\draw (8,-6) node[anchor=north] {$E_3$};
\draw (6,-4) node[anchor=east] {$E_4$};
\fill (6,-2) circle (2pt);
\fill (10,-2) circle (2pt);
\fill (6,-6) circle (2pt);
\fill (10,-6) circle (2pt);
\draw (6,-2) node[anchor=south east] {A};
\draw (10,-2) node[anchor=south west] {B};
\draw (10,-6) node[anchor=north west] {C};
\draw (6,-6) node[anchor=north east] {D};
\end{tikzpicture}
\caption{Left: the unit ball $\C$ and an isoperimetric cluster for Example~\ref{exL1}. Right: the unit ball $\C$ and an (impossible) isoperimetric cluster without quadruple points for Example~\ref{exL2}.}\label{FigsecL1}
\end{figure}

We can now slightly modify the above example, so that the density $h$ becomes strictly convex and uniformly round, and still a quadruple point occurs.

\begin{example}\label{exL2}
We define this time a density $h$ inside $\Q$ as in Figure~\ref{FigsecL1}, right. It is very close to the $L^\infty$ density of Example~\ref{exL1}, but the four sides of the unit ball are now substituted by four arcs, with strictly positive but very small curvature, and with the same four endpoints, i.e.\ $(\pm 1,\,\pm 1)$. Notice that $h$ is not ${\rm C}^1$, but it is strictly convex and uniformly round.\par

As in the example above, with a suitable choice of $g$ and $h$ outside of the square (and again with $g\equiv 1$ inside) we can obtain a minimal cluster $\E$ so that the four sides of the square $\Q$ belong to the sets $E_1,\, E_2,\, E_3$ and $E_4$, and that these four sets have volume $1$ each inside the square. We claim that then the minimal cluster has again a quadruple point at the origin. If this is false, then there is either a quadruple point at a point different from the origin, or (at least) two triple points.\par

We can then call $P$ and $Q$ two multiple points, as in the figure, and assume that they do not coincide both with the origin, in particular they are either both triple points and distinct, or they could coincide and be a single quadruple point, but not in the origin. Notice that, since the density is very close to the one of Example~\ref{exL1}, and in that case the boundary of the minimal cluster was done by the two digonals $AC$ and $BD$, then both the points $P$ and $Q$ must be very close to the origin, so in particular $d,\,\delta \ll 1$, where we call $\delta=|P-Q|$ the Euclidean distance between the two points and $d=\max\{|P-O|,\, |Q-O|\}$ the Euclidean distance between the origin and the furthest of the two points, that we assume to be $P$ to fix the ideas. By Lemma~\ref{lemmagiorgio}, the perimeter of the cluster inside the square is
\begin{equation}\label{sqL1}
P(\E; \Q)\geq \len(AP)+\len(DP)+\len(PQ)+\len(BQ)+\len(CQ)\,,
\end{equation}
where by $\len$ we denote the length of a curve, or a segment, with repect to $h$, that is, the definition~(\ref{lengthcurves}) with $h$ in place of $\h$. Notice that in this case there is no need to consider oriented segments, since the density is symmetric, and there is also no need to consider a clockwise rotation of $90^\circ$ as through the rest of the paper, because the density remains the same after a rotation of $90^\circ$. There is a constant $C>0$, depending on $h$ such that
\[
\len(AP)+\len(BP)+\len(CP)+\len(DP) \geq 4 + C d\,.
\]
In fact, the best constant $C$ for which the above inequality is true depends continuously on the curvature of the four arcs of $\C$. Since we have $C=1$ for the case of Example~\ref{exL1}, which corresponds to zero curvature, we can assume $C>1/2$ up to have chosen a sufficiently low curvature. Similarly, we have
\[
\len(BQ)+\len(CQ) \geq \len (BP)+\len(CP) - C' \delta\,.
\]
This estimate is again easily seen to hold with $C'=2$ for the density of Example~\ref{exL1}, so with some $C'<3$ in the present case. Inserting the last two estimates in~(\ref{sqL1}), and keeping in mind that the optimal cluster has at most perimeter $4$ in $\Q$ (because we can use the cluster with boundary in $\Q$ given by the two diagonals as competitor), we get
\begin{equation}\label{eL2}
d< 6\delta\,,
\end{equation}
that is, the points $P$ and $Q$ cannot be much closer to each other than to the origin. There exists a third constant $c>0$ such that
\[
\len(AP)+\len(DP) \geq \len (AO)+\len(DO) - c|P-O|=2 - cd\,.
\]
In fact, for the density of Example~\ref{exL1} this is true with $c=0$, because the unit ball for $h$ in that case is a square. In our case, provided that the curvature of the arcs of $\C$ is suficiently small, we can assume $c$ as small as desired. Similarly,
\[
\len(BQ)+\len(CQ) \geq 2 - c|Q-O|\geq 2 - cd\,.
\]
Finally,
\[
\len(PQ) \geq \frac{\sqrt 2}2\, |P-Q| = \frac{\sqrt 2}2\, \delta\,.
\]
Inserting the last three estimates in~(\ref{sqL1}), we get
\[
cd \geq \frac{\sqrt 2}4\, \delta\,,
\]
and since as observed $c$ can be taken arbitrarily small this gives a contradiction to~(\ref{eL2}). We have then proved that also for this modified density $h$, which is not ${\rm C}^1$ but which is strictly convex and uniformly round, a minimal cluster can have a quadruple point.
\end{example}

\subsection{The directions of the arcs at triple points\label{whichdir}}

This section is devoted to discuss which can be admissible directions for the tangents of $\partial^*\E$ at some triple point. First of all, we can immediately write down the first order minimality property.
\begin{lem}\label{qs}
Let $\h:\R^2\to\R^+$ be a positively $1$-homogeneous fuction, strictly positive and ${\rm C}^1$ except at $0$ and with strictly convex unit ball. Let moreover $A,\,B,\,C,\,O$ be four distinct points such that $A,\,B$ and $C$ are nonaligned. Then, $O$ uniquely minimizes the function $L:\R^2\to\R^+$ given by
\begin{equation}\label{Lgrosso}
L(P):= \h(PA)+\h(PB)+\h(PC)
\end{equation}
if and only if
\begin{equation}\label{1storder}
\nabla \h(OA)+\nabla\h(OB)+\nabla\h(OC)=0\,.
\end{equation}
\end{lem}
\begin{proof}
The function $P\mapsto L(P)$ is strictly convex because so is the unit ball of $\h$ and because $A,\,B,\,C$ are nonaligned. Moreover, this function is ${\rm C}^1$ except at $A,\,B$ and $C$. Therefore, $O$ uniquely minimizes $L$ if and only if $\nabla L(O)=0$. In particular,
\[
\nabla L(O) = -\big(\nabla \h(OA) +\nabla \h(OB) + \nabla \h(OC)\big)\,,
\]
hence we have concluded.
\end{proof}

Notice that the above geometrical property characterizes the possible directions corresponding to triple points. Let us be more precise. Suppose for a moment, just for simplicity, that $\h$ is symmetric, so that the length of curves is defined (otherwise one has to speak about \emph{oriented} curves, as already done in Section~\ref{sec:proof}). Let then $A,\,B$ and $C$ be three points in $\R^2$. By means of Lemma~\ref{lemmagiorgio}, it is very simple to notice that the shortest connected set containing $A,\,B$ and $C$ is always given by the three segments joining $A,\,B$ and $C$ with some point $O$, which might coincide with one between $A,\,B$ and $C$. Of course, this point $O$ minimizes the function $L(P)$ defined in~(\ref{Lgrosso}).

Let us consider this function in the general case when $\h$ does not need to be symmetric. The existence of a point $O$ minimizing $L$ is obvious, and by the strict convexity of the unit ball we can observe that such a point is uniquely determined if the points are not aligned. In addition, if the points are not aligned and $O$ is not one of them, the three directions $OA,\,OB$ and $OC$ necessarily satisfy the relation~(\ref{1storder}). It is interesting to observe ``how many'' admissible triples there are, and this is explained by the lemma below.
\begin{lem}
Let $\h:\R^2\to\R^+$ be as in Lemma~\ref{qs}. Then, there exists a triple of non-aligned points $\{A,\,B,\,C\}$ in $\partial\C$, with the property that~(\ref{1storder}) is satisfied with some $O\notin \{A,\,B,\,C\}$. In addition, if $\h$ is symmetric then for every $A\in\partial \C$ there exists a unique choice of $\{B,\,C\}$ in $\partial\C$ so that the triple $\{A,\,B,\,C\}$ has the previous property. Instead, if $\h$ is not symmetric, it is possible that for some $A\in\partial\C$ there is either no such pair, or more than one.
\end{lem}
\begin{proof}
We have already noticed that for every three non-aligned points $A,\,B,\,C$ there exists a unique point $O$ minimizing the function $L$ defined in~(\ref{Lgrosso}), and the three directions $OA,\,OB$ and $OC$ satisfy the property~(\ref{1storder}) if $O\notin \{A,\,B,\,C\}$. To prove the first part of the statement we want then to find three such points. Notice that the requirement that the points $A,\,B,\,C$ belong to $\partial\C$ is just to fix their length, but since $\nabla\h$ is $0$-homogeneous this can be achieved for free just dividing the length of the segments $OA,\,OB$ and $OC$ by $\h(OA),\,\h(OB)$ and $\h(OC)$ recpectively.\par

We start by taking three non-aligned points $A_0,\,B,\,C\in\R^2$. If the corresponding minimizing point $O$ is not one of them, we are already done. Otherwise, we can assume that $O=A_0$. We will obtain the first part of the statement by finding a point $A_t$ such that $A_t,\,B$ and $C$ are still non-aligned, and the point $O_t$ minimizing the function $L_t(P)=\h(PA_t)+ \h(PB)+\h(PC)$ is not in $\{A_t,\,B,\,C\}$.\par

Up to change the names of the points, we assume that $\h(BC)\leq \h(CB)$. Then, we let $A_1$ be the point such that $C$ is the middle point of the segment $A_1 B$. Moreover, for any $0<t<1$ we write $A_t=tA_1 + (1-t)A_0$. For every $0\leq t<1$ the points $A_t,\, B$ and $C$ are not aligned, hence $L_t$ is minimized by a unique point, that we call $O_t$. If, for some $0<t<1$, the point $O_t$ is not one between $A_t,\, B$ and $C$ then we are done. If this does not happen, then by uniqueness and continuity we derive $O_t=A_t$ for every $0<t<1$. Again by continuity, the point $A_1$ is then a minimizer (not necessarily the unique one) of the function $L_1$. And in turn, this gives a contradiction because
\[
L_1(A_1) = 3 \h(CB) > \h(CB) +\h(BC) = L_1(C)\,.
\]
The first part of the claim is then proved.\par

Let us now pass to the second part. As already observed in Section~\ref{sect90}, see in particular~(\ref{1order}), for every $P\in\partial\C$ we have
\begin{equation}\label{weknowthat}
\nabla \h(OP)=\frac{\nu_P}{OP\cdot \nu_P}\,,
\end{equation}
where $\nu_P$ denotes the outer unit normal vector to $P$ at $\partial\C$. Fix now a direction $\eta\in\S^1$, and consider the line passing through $O$ in direction $\eta$. As in Figure~\ref{Figsymm}, left, we call $R^-$ and $R^+$ the two intersections of this line with $\partial\C$, being $OR^-\cdot \eta < 0 < OR^+ \cdot \eta$, and $Q^+,\, Q^-$ the two points of $\partial\C$ which respectively maximize and minimize the signed distance with the line. A simple geometric observation, coming from the strict convexity and regularity of $\partial\C$, shows the following. The function $\partial\C\ni P\mapsto \nabla\h(OP)\cdot \eta=\partial\h/\partial\eta(OP)$ is $0$ in $P=Q^+$, then it continuously strictly increases when $P$ is moved between $Q^+$ and $R^+$, reaching its maximum at $P=R^+$, then it continuously strictly decreases when $P$ goes from $R^+$ to $Q^-$, and it is $0$ again in $P=Q^-$. Similarly, the function decreases for $P$ between $Q^-$ and $R^-$, where the minimum is reached, and then it increases up to $P=Q^+$.\par
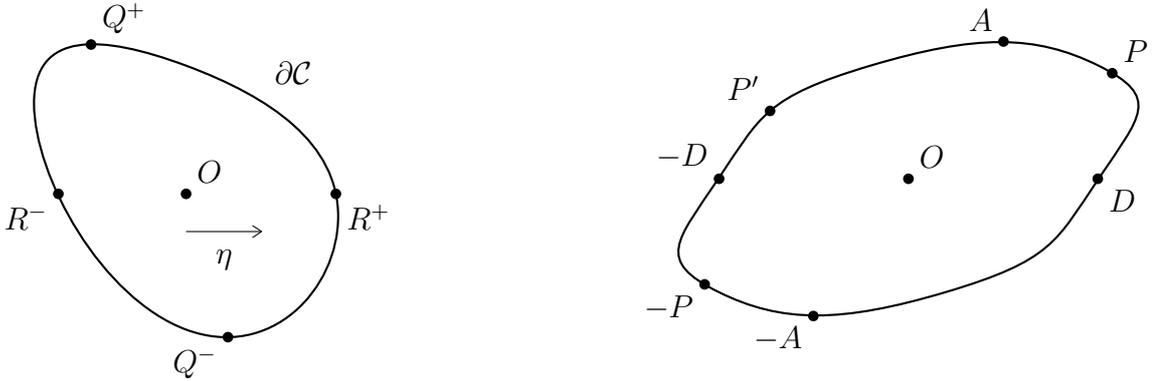
\begin{figure}[htbp]
\begin{tikzpicture}[>=>>>,smooth cycle,scale=.9]
\draw[line width=.8pt] plot [tension=1] coordinates {(-2,1.5) (0,2) (2,0) (0,-1.5)};
\fill (0,0.3) circle(2pt);
\draw (0,0.3) node[anchor=south west] {$O$};
\draw[->] (0,-0.2) -- (1,-0.2);
\draw (0.5,-0.3) node[anchor=north] {$\eta$};
\fill (1.97,0.3) circle(2pt);
\fill (-1.68,0.3) circle (2pt);
\draw (-1.68,0.3) node[anchor=north east] {$R^-$};
\draw (1.97,0.3) node[anchor=north west] {$R^+$};
\fill (-1.25,2.28) circle(2pt);
\fill (.55,-1.6) circle(2pt);
\draw (-1.25,2.28) node[anchor=south west] {$Q^+$};
\draw (.55,-1.6) node[anchor=north east] {$Q^-$};
\draw (1.8,2.2) node[anchor=north east] {$\partial\C$};
\draw[line width=.8pt] plot [tension=1] coordinates {(8,2) (11,2) (11,0.5) (9,-1) (6,-1) (6,0.5)};
\fill (8.5,.5) circle(2pt);
\draw (8.5,.5) node[anchor=south west] {$O$};
\fill (9.75,2.32) circle(2pt);
\draw (9.75,2.32) node[anchor=south east] {$A$};
\fill (7.25,-1.32) circle(2pt);
\draw (7.25,-1.32) node[anchor=north east] {$-A$};
\fill (6.01,.5) circle(2pt);
\draw (6.01,.5) node[anchor=south east] {$-D$};
\fill (10.99,.5) circle(2pt);
\draw (10.99,.5) node[anchor=north west] {$D$};
\fill (11.18,1.9) circle(2pt);
\draw (11.18,1.9) node[anchor=south west] {$P$};
\fill (5.82,-.9) circle(2pt);
\draw (5.82,-.9) node[anchor=north east] {$-P$};
\fill (6.68,1.4) circle(2pt);
\draw (6.68,1.4) node[anchor=south east] {$P'$};
%\draw (6,.5) -- (10,.5);
%\draw (6.02,0) -- (6.02,1);
\end{tikzpicture}
\caption{Left: the function $P\mapsto \nabla(OP)\cdot \eta$. Right: situation when $\h$ is symmetric.}\label{Figsymm}
\end{figure}

Let us now fix a point $A\in \partial\C$, and let us start by considering the symmetric case. We have to show that there exists a unique pair $\{B,C\}$ such that the triple $\{A,B,C\}$ satisfies~(\ref{1storder}). Up to a rotation, we assume that $A$ is the point of $\partial\C$ with biggest second coordinate, so that $\nu_A=(0,1)$. The situation is depicted in Figure~\ref{Figsymm}, right. Keep in mind that, as observed above, $\partial\h/\partial x(OP)$ is positive for points $P$ in the ``right part'' of $\partial\C$, i.e., in the clockwise arc from $A$ to $-A$, and it is negative for points in the ``left part'' of $\partial \C$. Since $\partial \h/\partial x(OA)=0$, this implies that a triple $\{A,\,B,\,C\}$ satisfying~(\ref{1storder}) must necessarily have one between $B$ and $C$ in the ``right part'' of $\partial\C$, and the other one in the left part. Let us now take a point $P$ in the right part of $\partial\C$, and let us ask ourselves whether or not a suitable triple may exist with $B=P$. This happens if and only if there is some $C\in\partial\C$ such that, calling $\eta$ the direction of the vector $OA$,
\begin{align}\label{devsod}
\frac{\partial\h}{\partial x}\, (OC) = -\frac{\partial\h}{\partial x}\, (OP)\,, && \frac{\partial\h}{\partial \eta}\, (OC) +\frac{\partial\h}{\partial \eta}\, (OP) = -\frac{\partial\h}{\partial \eta}\, (OA)\,,
\end{align}
The monotonicity of $\partial\h/\partial x$ observed above, together with the symmetry of $\C$, ensures that there are exactly two points satisfying the first equality. One of them is $-P$, for which the second equality is surely false because $\partial \h/\partial\eta(OC) +\partial \h/\partial\eta(OP)=0$, and the other one is some point $P'$. As shown in the figure, $P$ is above $D=\partial\C \cap \{(x,0):\, x>0\}$ if and only if $P'$ is above $-D$. Summarizing, a suitable pair can only exist with $B=P$ and $C=P'$ for some $P$ in the right part of $\partial\C$, and in particular we have only to take care of the second equality in~(\ref{devsod}) since the first one is true by construction. Notice that, if $P$ continuously ranges from $A$ to $A'$ in the right part of $\partial\C$, then $P'$ continuously ranges from $A$ to $A'$ in the left part, and there is a one-to-one correspondence between $P$ and $P'$. Again recalling the observation above about the monotonicity of $\partial\h/\partial\eta$, we have that the quantity $\partial\h/\partial\eta(OP)$ is strictly decreasing when $P$ moves from $A$ to $A'$, and the same happens for $\partial\h/\partial\eta(OP')$. As a consequence, there can be at most a single point $P$ such that the right equality in~(\ref{devsod}) holds with $C=P'$. And finally, the existence of such a point $P$ is ensured by the continuity, since for $P=A$
\[
\frac{\partial\h}{\partial \eta}\, (OP') +\frac{\partial\h}{\partial \eta}\, (OP) =  2\,\frac{\partial\h}{\partial \eta}\, (OA) > - \,\frac{\partial\h}{\partial \eta}\, (OA)
\]
and for $P=-A$
\[
\frac{\partial\h}{\partial \eta}\, (OP') +\frac{\partial\h}{\partial \eta}\, (OP) =  -2\,\frac{\partial\h}{\partial \eta}\, (OA) < - \,\frac{\partial\h}{\partial \eta}\, (OA)\,.
\]
Now, let us remove the assumption that $\h$ is symmetric, and let us present an example in which no pair $\{B,\,C\}$ exists such that $\{A,\,B,\,C\}$ satisfies~(\ref{1storder}), and another example in which more than a single pair exists.\par
The first example, depicted in Figure~\ref{Fignonsymm}, left, is very simple, it is enough to take as $\C$ a disk with radius $1$ centered at the point $(0,-1/2)$, and call $A=(0,1/2)$ and $A'=(0,-3/2)$. By~(\ref{weknowthat}), for every possible choice of $B,\,C\in \partial\C$ we have
\[
\frac{\partial \h}{\partial y}\, (OA)+\frac{\partial \h}{\partial y}\, (OB)+\frac{\partial \h}{\partial y}\, (OC) \geq \frac{\partial \h}{\partial y}\, (OA) + 2\,\frac{\partial \h}{\partial y}\, (OA') = 2 - \frac 43 > 0\,,
\]
and then $\{A,\,B,\,C\}$ cannot be an admissible triple.\par
\begin{figure}[htbp]
\begin{tikzpicture}[>=>>>,smooth cycle]
\fill (0,0.8) circle(2pt);
\draw (0,0.8) node[anchor=north east] {$O$};
\draw (0,0) circle (1.6);
\fill (0,1.6) circle(2pt);
\draw (0,1.6) node[anchor=south east] {$A$};
\fill (0,-1.6) circle(2pt);
\draw (0,-1.6) node[anchor=north east] {$A'$};
\draw (1.1,1.1) node[anchor=south west] {$\partial\C$};
\fill (8,-0.5) circle(2pt);
\draw (8,-0.8) node[anchor=south west] {$O$};
\fill (8,1.5) circle(2pt);
\draw (8,1.5) node[anchor=south] {$A$};
\draw[line width=1pt] (8,1.5) arc(90:60:2);
\draw[line width=1pt] (8,1.5) arc(90:120:2);
\fill (8.68,1.38) circle(2pt);
\draw (8.68,1.38) node[anchor=south west] {$B$};
\fill (7.32,1.38) circle(2pt);
\draw (7.32,1.38) node[anchor=south east] {$B'$};
\draw (8.68,1.38) -- (8,-0.5) -- (8,1.5);
\draw (8,0.2) arc(90:70:0.7);
\draw (8,.2) node[anchor=east] {$\theta$};
\draw (6.5,-1.24) -- (9.5,-1.76);
\draw (6.5,-1.15) node[anchor=east] {$\tau$};
\draw (9.5,-1.24) -- (6.5,-1.76);
\draw (10.1,-1.15) node[anchor=east] {$\tau'$};
\fill (7,-1.33) circle(2pt);
\draw (7,-1.33) node[anchor=south west] {$C$};
\fill (9,-1.33) circle(2pt);
\draw (9,-1.33) node[anchor=south east] {$C'$};
\end{tikzpicture}
\caption{Left: an example with no admissible triple containing $A$. Right: an example with multiple admissible triples containing $A$.}\label{Fignonsymm}
\end{figure}
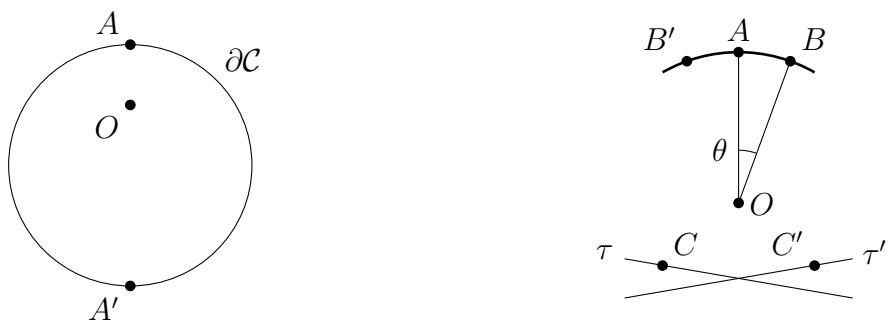
Let us now present an example with more than a single admissible triple containing a given point $A$. As shown in Figure~\ref{Fignonsymm}, right, we define $A=(0,1)$ and we let $\partial\C$ coincide with the circle $\partial B(0,1)$ for a short while around $A$. We let also $B=(\sin\theta,\cos\theta)$ for a small $\theta>0$. We have then, by construction and recalling~(\ref{weknowthat}),
\[
\nabla \h(OA)+\nabla\h(OB)=(0,1) + (\sin\theta,\cos\theta)\,,
\]
and then a point $C\in\partial\C$ completes a suitable triple together with $A$ and $B$ if and only $\nabla\h(OC)=(-\sin\theta,-1-\cos\theta)$. Again by~(\ref{weknowthat}), this is equivalent to say that the tangent line to $\partial\C$ at $C$ is the line $\tau$ whose direction is orthogonal to the vector $(\sin\theta,1+\cos\theta)$, and whose signed distance from the origin is $-\big|(\sin\theta,1+\cos\theta)\big|^{-1}$.\par
We can then fix a point $C\in \tau$, with first coordinate slightly negative. If $C\in\partial\C$ and $\tau$ is the tangent line to $\partial\C$ at $C$ then $\{A,\,B,\,C\}$ is an admissible triple. Let us then call $\tau'$ (resp., $B',\,C'$) the line obtained from $\tau$ (resp., the point obtained from $B,\,C$) with a symmetry with respect to the vertical axis $\{x=0\}$. Therefore, as soon as $\partial\C$ contains the small arc of circle around $A$, and the points $C$ and $C'$ with tangent lines $\tau$ and $\tau'$ (and this is obviously possible with a non-symmetric unit ball $\C$), then both $\{A,\,B,\,C\}$ and $\{A,\,B',\,C'\}$ are admissible triples, so the uniqueness does not hold.
\end{proof}

Let us briefly describe an explicit example of a unit ball and of a corresponding admissible triple.
\begin{example}
For $p>1$, let us consider the norm $\h$ corresponding to the unit ball $\C=\{(x,y)\in\R^2:\, |x|^p +|y|^p\leq 1\}$. Observe that this norm is symmetric. Let us now consider $A=(0,1)\in\partial\C$. Then, a boring but elementary calculation ensures that the unique pair $\{B,\,C\}\in\partial\C$ such that $\{A,\,B,\,C\}$ satisfies~(\ref{1storder}) is given by the two points for which $\angle AOB= 2\pi-\angle AOC= \alpha$, being $\tan\alpha= -(2^p-1)^{1/p}$. Notice that of course, for $p=2$, this reduces to the well-known $120^\circ$ rule.
\end{example}

To conclude, we can ``translate'' the property~(\ref{1storder}) to triple points of optimal clusters. As already noticed several times, the study of the perimeter coincides with the study of the minimal length of curves, except that we have to rotate the normal vectors so to obtain the normal ones. And moreover, depending on the colors of the regions, the rotated function might have to be simmetrized. Precisely, we can prove the following result.
\begin{prop}
Let $h$ satisfy the assumptions of Theorem~\mref{main}, and let $O$ be a triple point of an optimal cluster $\E$. Call $\theta_1,\,\theta_2,\,\theta_3\in\S^1$, ordered in clockwise sense, the directions of the three arcs of $\partial\E$ meeting at $O$. For every $\nu\in\S^1$, let us call $\hat\nu$ the direction obtained by rotating $\nu$ of $90^\circ$ in the clockwise sense, and let $\h:\R^2\to\R$ be given by $\h(\nu)=h(O,\hat\nu)$. Then, the minimality property $\Theta=0$ holds, where $\Theta\in\R^2$ is defined as follows. If the three regions meeting at $O$ are all colored, then
\begin{equation}\label{321}
\Theta = \frac{\nabla\h(\theta_1)-\nabla\h(-\theta_1)}2+\frac{\nabla\h(\theta_2)-\nabla\h(-\theta_2)}2+\frac{\nabla\h(\theta_3)-\nabla\h(-\theta_3)}2\,.
\end{equation}
If the region between $\theta_1$ and $\theta_2$ is white, then
\begin{equation}\label{322}
\Theta = \nabla\h(\theta_1)-\nabla\h(-\theta_2)+\frac{\nabla\h(\theta_3)-\nabla\h(-\theta_3)}2\,.
\end{equation}
Notice that, if $h$ is symmetric, then both the above definitions simply reduce to
\[
\Theta = \nabla\h(\theta_1)+\nabla\h(\theta_2)+\nabla\h(\theta_3)\,.
\]
\end{prop}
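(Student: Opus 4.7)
The plan is to extract the first-order condition by constructing a one-parameter family of competitors that move the triple point $O$ to $P=O+\varepsilon w$ for an arbitrary direction $w\in\R^2$. By Theorem~\mref{main}, in a small ball $B(O,r_0)$ the boundary $\partial\E$ consists of three ${\rm C}^1$ arcs emanating from $O$ with tangent directions $\theta_1,\theta_2,\theta_3$; for $r<r_0$ these arcs meet $\partial B(O,r)$ at points $A_{r,k}=O+r\theta_k+o(r)$, and by Proposition~\ref{step6} the unit tangent to the $k$-th arc differs from $\theta_k$ by at most $\xi(r)\to 0$. For $0<\varepsilon\ll r$ define the competitor $\F=\F^{w,\varepsilon}_r$ by keeping $\E$ outside $B(O,r)$ and replacing the three arcs in $B(O,r)$ with the straight segments $PA_{r,k}$, each of the three resulting triangular sectors being assigned to the same region of the cluster as in $\E$.

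To each arc $k$ attach the effective $1$-homogeneous weight $\h_k:\R^2\to\R^+$, defined by $\h_k=\tilde\h$ if both adjacent sectors are colored, $\h_k(v)=\h(v)$ if the clockwise-side sector is white, and $\h_k(v)=\h(-v)$ if the counterclockwise-side sector is white, so that the $\overline h$-perimeter contribution of a length-$r$ segment from $O$ in direction $\theta_k$ bordering the correct colored/white configuration equals $r\,\h_k(\theta_k)$. Since $h\in{\rm C}^1$ in the second variable, each $\h_k$ is ${\rm C}^1$ on $\R^2\setminus\{0\}$, and a Taylor expansion combined with the $1$-homogeneity yields
\[
\h_k(A_{r,k}-P)=r\,\h_k(\theta_k)-\varepsilon\,w\cdot\nabla\h_k(\theta_k)+o(\varepsilon)+o(r).
\]
Summing over $k$, using that the original arcs have $\overline h$-length $r\sum_k\h_k(\theta_k)+o(r)$ by Proposition~\ref{step6}, and noting that replacing $\overline h$ with $h(x,\cdot)$ introduces an error of order $r\,\omega(r)=o(r)$, we obtain
\[
P(\F)-P(\E)=-\varepsilon\,w\cdot\sum_{k=1}^3\nabla\h_k(\theta_k)+o(\varepsilon)+o(r).
\]
The $O(\varepsilon r)$ volume discrepancy is then corrected via the $\varepsilon-\varepsilon^\beta$ property applied at a point $y$ with $B(y,R_\beta)\cap B(O,r)=\emptyset$, producing a cluster $\F'$ with $|\F'|=|\E|$ and $P(\F')\le P(\F)+\Ceeb[\varepsilon r](\varepsilon r)^\beta$.

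The minimality of $\E$ gives $P(\F')\ge P(\E)$; dividing by $\varepsilon$ and choosing $\varepsilon=r\,\psi(r)$ with $\psi(r)\searrow 0$ slowly enough that $\xi(r)/\psi(r),\,\omega(r)/\psi(r)\to 0$, the right-hand side reduces to $w\cdot\sum_k\nabla\h_k(\theta_k)\le o(1)+\Ceeb[\varepsilon r]\,\varepsilon^{\beta-1}r^\beta$. Under either assumption~(i) or~(ii) of Theorem~\mref{main} (recalling that $\beta\ge 1/2$ from $\eta\beta\ge 1$ and $\eta\le 2$) the final term also vanishes as $r\to 0$, and the arbitrariness of $w$ forces $\sum_k\nabla\h_k(\theta_k)=0$. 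The identification of this vector with the expression $\Theta$ of the proposition is a direct chain-rule computation: $\nabla\tilde\h(\theta)=\tfrac{1}{2}(\nabla\h(\theta)-\nabla\h(-\theta))$ and $\nabla[\h(-\cdot)](\theta)=-\nabla\h(-\theta)$, which on substituting according to the three possible coloring patterns recovers formulas~(\ref{321}) and~(\ref{322}). The main technical hurdle is this final balancing: the volume-correction term, the tangent-modulus error $\xi$ of the arcs, and the modulus of continuity $\omega$ of $h$ must all be simultaneously made $o(\varepsilon)$, which requires using the sharp form of the $\varepsilon-\varepsilon^\beta$ constant $\Ceeb[\,\cdot\,]$ and, in the borderline case, the $1/2$-Dini hypothesis from assumption~(ii).
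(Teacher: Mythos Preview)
Your argument is correct and follows the same overall strategy as the paper: displace the triple point, replace the three arcs by segments, and repair the volume via the $\eps$--$\eps^\beta$ property. The execution, however, differs in two ways that make the paper's version considerably cleaner.

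First, the paper argues by contradiction: assuming $\Theta\neq 0$, it fixes once and for all a point $D$ in the unit ball with $L(D)=L(O)-c$ for some $c>0$, and then moves the triple point to $rD$, i.e.\ by a distance \emph{comparable to} $r$, not $o(r)$. Second, rather than estimating the $\h_k$-length of the original arcs via Proposition~\ref{step6}, the paper simply invokes Jensen (Lemma~\ref{lemmagiorgio}) to get the one-sided inequality $\overline P(\E'_O)\le\overline P(\E)$, so that
\[
\overline P(\E'_{rD})\le \overline P(\E)-\tfrac{c}{2}\,r
\]
for small $r$ (the factor $\tfrac12$ absorbing the drift $\tilde\theta_k\to\theta_k$). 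The gain is thus of order $r$, while the continuity error $O(r\,\omega(r))$ and the volume-correction term from Lemma~\ref{labello} are both $o(r)$ (in case~(ii) because $\Ceeb[t]\to 0$). No balancing is needed.

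Your route, with $\eps=r\psi(r)=o(r)$, forces you to make the tangent error $O(r\xi(r))$, the continuity error $O(r\,\omega(r))$, and the volume-correction term all $o(\eps)$ simultaneously. This can be done---for instance $\psi(r)=\sqrt{\max\{\xi(r),\omega(r),r^{\eta\beta-1}\}}$ in case~(i) and $\psi(r)=\sqrt{\max\{\xi(r),\omega(r),\Ceeb[r^2]\}}$ in case~(ii) work---but it is precisely the ``technical hurdle'' you flag, and the paper avoids it entirely by scaling the displacement with $r$ and using Jensen in place of a two-sided arc-length estimate.
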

\begin{proof}
Let us call $A,\,B,\,C$ the points in $\partial B(O,1)$ in the directions $\theta_1,\,\theta_2$ and $\theta_3$. For every point $D\in B(O,1)$, let us define $L(D)$ as
\[
L(D)= \frac{\h(DA) + \h(AD)}2 +\frac{\h(DB) + \h(BD)}2+\frac{\h(DC) + \h(CD)}2
\]
if the three regions meeting at $O$ are all colored, while
\[
L(D)=\h(DA) + \h(BD)+\frac{\h(DC) + \h(CD)}2
\]
if the regions between $\theta_1$ and $\theta_2$ is white. Notice that, thanks to the definitions~(\ref{321}) and~(\ref{322}), if $|D|=\eps\ll 1$ then
\[
L(D) = L(O) - \Theta\cdot D + o(\eps)\,.
\]
As a consequence, if $\Theta\neq 0$ there are a constant $c>0$ and a point $D\in B(0,1)$ such that
\begin{equation}\label{tbr}
L(D) = L(O) - c\,.
\end{equation}
Let now $r\ll 1$ be a small constant. Keep in mind that $O$ is a triple point, and the arcs meeting at $O$ correspond to the directions $\theta_1,\,\theta_2,\,\theta_3$. Therefore, as soon as $r$ is small enough, $\partial\E\cap \partial B(O,r)$ consists of three points $A',\,B'$ and $C'$, and the directions of $OA',\, OB'$ and $OC'$ are arbitrarily close to $\theta_1,\,\theta_2$ and $\theta_3$. Let us also define $P'$ as the perimeter obtained by using~(\ref{weightedvolper}) with $h'$ in place of $h$, where $h'$ is defined as $h'(x,\nu)=h(x,\nu)$ if $x\notin B(O,r)$, and $h'(x,\nu)=h(O,\nu)$ if $x\in B(O,r)$. In addition, for every point $Q\in B(O,r)$ we call $\E'_Q$ the cluster which equals $\E$ outside the ball $B(O,r)$, and such that $\partial\E'\cap B(O,r)$ is given by the three segments $QA,\,QB$ and $QC$. In particular, we call $\E'=\E'_{rD}$. By~(\ref{tbr}) and rescaling, and also using Lemma~\ref{lemmagiorgio}, we can estimate
\begin{equation}\label{tbs}
P'(\E') = P'(\E'_O) - cr \leq P'(\E) - cr\,.
\end{equation}
We can then conclude by finding a contradiction with the same argument used several times in Section~\ref{sec:proof}. Namely, if $r\ll 1$ we have that
\begin{align*}
\big| P\big(\E;B(O,r)\big) - P'\big(\E;B(O,r)\big) \big| \ll r\,, &&
\big| P\big(\E';B(O,r)\big) - P'\big(\E';B(O,r)\big) \big| \ll r\,,
\end{align*}
hence for $r$ small enough~(\ref{tbs}) gives
\[
P(\E') \leq P(\E) - \frac c2\, r\,.
\]
And finally, this estimate together with Lemma~\ref{labello} allows to find a competitor $\E''$ with $|\E''|=|\E|$ and $P(\E'')< P(\E)$, which is the searched contradiction.
\end{proof}

\appendix

\section{Some properties about quasi-minimal sets and porous sets}

In this short appendix, we present some known results concerning quasi-minimal and porous sets, and their boundaries. We will not need to deal with densities, so we will only use the standard Euclidean volume $\Ve{\cdot}$ and perimeter $\Pe$. First of all, we recall a couple of important standard definitions, see for instance~\cite{Tam,DavSem,KKLS} (we actually deal only with the case of subsets of $\R^N$, while~\cite{KKLS} considers more general metric spaces with doubling measures).

\begin{defn}[Quasi-minimal sets, porous sets]
Let $F\subseteq\R^N$ be a Borel set with locally finite perimeter. We say that \emph{$F$ is quasi-minimal} if there exists a constant $C_{qm}$ such that, for every ball $B(x,r)\subseteq\R^N$ and every set $H\subseteq\R^N$ with $H\Delta F\comp B(x,r)$, one has
\[
\Pe\big(F;B(x,r)\big) \leq C_{qm} \Pe\big(H;B(x,r)\big)\,.
\]
We say that \emph{$F$ is porous} if there exists $\delta>0$ such that, for every $x\in \partial F$ (the topological boundary) and every small ball $B(x,r)$, there exist a ball $B(y,\delta r)\subseteq B(x,r)\cap F$ and a ball $B(z,\delta r)\subseteq B(x,r)\setminus F$.
\end{defn}

The following result is well-known, see for instance~\cite[Theorem~1.8]{DavSem} or~\cite[Theorem~5.2]{KKLS}.
\begin{thm}\label{qmpor}
Every quasi-minimal set $F\subseteq\R^N$ admits a porous representative.
\end{thm}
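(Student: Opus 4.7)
My plan is to establish that quasi-minimal sets satisfy uniform density and perimeter (``Ahlfors regularity'') estimates at every boundary point, and then deduce porosity by a measure comparison; all constants below will depend only on $N$ and $C_{qm}$. As a first step, for $x\in\partial F$ and small $r>0$ I would prove
\[
c_1 r^N \leq \Ve{F\cap B(x,r)} \leq (\omega_N - c_1) r^N\,, \qquad c_1 r^{N-1} \leq \Pe(F; B(x,r)) \leq C_1 r^{N-1}\,,
\]
where $\omega_N = \Ve{B(0,1)}$. The upper perimeter bound follows from quasi-minimality applied to the trivial competitors $H = F \setminus \overline{B(x,r')}$ and $H = F \cup \overline{B(x,r')}$ for $r' < r$: each yields $\Pe(F; B(x,r)) \leq C_{qm}\, \H^{N-1}(\partial B(x,r'))$, so letting $r'\to r^-$ gives the bound. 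For the lower volume density I would use the classical De Giorgi argument: setting $m(r) = \Ve{F\cap B(x,r)}$, the same cutting competitor combined with the coarea formula gives, for a.e.\ $r$, the inequality $\Pe(F; B(x,r)) \leq C_{qm}\, m'(r)$. Combining this with the relative isoperimetric inequality $\min\{m(r), \omega_N r^N - m(r)\}^{(N-1)/N} \leq C\, \Pe(F; B(x,r))$ produces the ODE $m(r)^{(N-1)/N} \leq C'\, m'(r)$, which integrates to $m(r) \geq c_1 r^N$ using that $m(r)>0$ for all $r>0$ because $x\in\partial F$. The symmetric argument for $F^c$ yields the other volume bound, and the lower perimeter bound falls out of the relative isoperimetric inequality at scale $r$.

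As a second step, using the upper perimeter bound and the lower $\H^{N-1}$ density on $\partial F$ just established, I would deduce the standard neighborhood estimate
\[
\Ve{\{y\in B(x,r):\, \mathrm{dist}(y, \partial F) \leq s\}} \leq C_2\, s\, r^{N-1} \qquad \text{for every } 0<s\leq r\,.
\]
This follows by a Vitali covering of $\partial F \cap B(x, 2r)$ by balls $B(y_i, s)$ with $y_i\in\partial F$ and bounded overlap; the lower Ahlfors bound $\H^{N-1}(\partial F \cap B(y,s)) \geq c\, s^{N-1}$ forces the number of balls to be at most $C r^{N-1}/s^{N-1}$, and each contributes at most $\omega_N (3s)^N$ to the $s$-neighborhood, yielding the claim.

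To conclude porosity I would fix $x\in\partial F$ and a small $r>0$ and argue by contradiction: suppose no ball of radius $\delta r$ lies inside $F\cap B(x,r)$, where $\delta$ is an absolute constant to be chosen. Then every $y\in F\cap B(x, (1-\delta)r)$ has $B(y,\delta r)\cap F^c \neq \emptyset$, so $\mathrm{dist}(y, F^c)<\delta r$ and hence $\mathrm{dist}(y, \partial F)<\delta r$. The neighborhood estimate then gives $\Ve{F\cap B(x, (1-\delta)r)} \leq C_2\, \delta\, r^N$, contradicting the lower density $c_1 (1-\delta)^N r^N$ once $\delta$ is taken smaller than, say, $c_1/(2^{N+1}C_2)$; the symmetric argument applied to $F^c$ produces the other ball. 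The main obstacle will be Step~1: quasi-minimality requires the competitor to differ from $F$ on a set \emph{compactly} contained in $B(x,r)$, so the cutting construction must be carried out at radii $r'<r$ where the slicing formulas hold with equality, and one must then pass to $r'\to r^-$ to recover the coarea-type differential inequality $\Pe(F; B(x,r))\leq C_{qm}\, m'(r)$; once this is in place, Steps~2 and~3 are routine geometric measure theory.
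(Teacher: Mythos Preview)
The paper does not give its own proof of this theorem: it is stated in the appendix and immediately referred to the literature (David--Semmes and Kinnunen--Korte--Lorent--Shanmugalingam). Your sketch is correct and is essentially the argument one finds in those references: (i) the De~Giorgi cutting competitors $F\setminus B$ and $F\cup B$ yield the two-sided density bounds and Ahlfors regularity $c\,r^{N-1}\le \H^{N-1}(\partial^*F\cap B(x,r))\le C\,r^{N-1}$ at every boundary point; (ii) a Vitali covering converts this into the thin-neighborhood estimate for $\partial F$; (iii) a volume comparison then forces a clean ball on each side. The caveat you identify --- that the competitor only lies compactly inside a slightly larger ball, so one must work at radius $r'<r$ and pass to the limit --- is exactly the point one has to handle, and your treatment is adequate. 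One small remark: with the paper's definition of quasi-minimality as written (no upper bound on the scale $r$), a nontrivial bounded $F$ cannot be quasi-minimal, so in practice the estimates in Step~1 are to be understood for $r$ below some fixed scale; this is how the paper itself uses the notion in Lemma~\ref{lem1reg}, and porosity is a small-scale statement anyway.
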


The convenience of the notion of porosity is mainly given by the following standard fact, that we prove just for completeness.

\begin{lem}\label{porope}
Let $F\subseteq\R^N$ be a porous set. Then the set $F^{(1)}$ of the points of density $1$ of $F$ is open. Moreover, the reduced boundary $\partial^* F$ and the topological boundary $\partial F$ coincide up to $\haus^{N-1}$-negligible subsets.
\end{lem}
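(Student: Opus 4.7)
The plan is to leverage porosity in two complementary ways: first, to show that every topological boundary point of $F$ has upper density bounded away from both $0$ and $1$, and second, to observe conversely that every point with positive upper densities relative to both $F$ and $F^c$ must lie in $\partial F$. These two density bounds will quickly yield both claims, the second via Federer's classical identification of the measure-theoretic and reduced boundaries up to $\H^{N-1}$-null sets.

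For the first assertion, I would start with the observation that if $x \in \partial F$, porosity with some constant $\delta > 0$ supplies for every small $r$ a ball $B(z, \delta r) \subseteq B(x,r) \setminus F$, so $|B(x,r) \cap F|/|B(x,r)| \leq 1 - \delta^N < 1$, which forces $x \notin F^{(1)}$. Since the symmetric estimate with a ball $B(y,\delta r)\subseteq B(x,r)\cap F$ also rules out density $0$, any $x \in F^{(1)}$ necessarily lies in $\R^N \setminus \partial F$; being of density $1$, it cannot lie in $\text{int}(F^c)$, so it must be an interior point of $F$. Picking an open ball $B(x, r) \subseteq F$, every $y \in B(x, r)$ is trivially of density $1$, so $B(x,r)\subseteq F^{(1)}$. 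Hence $F^{(1)} = \text{int}(F)$ is open.

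For the second assertion, I would introduce the measure-theoretic boundary $\partial^m F$, i.e., the set of $x$ with positive upper density relative to both $F$ and $F^c$. The same porosity estimate gives $\partial F \subseteq \partial^m F$, since at $x \in \partial F$ both upper densities are at least $\omega_N^{-1}\delta^N$. Conversely, if $x \in \partial^m F$ then every ball around $x$ meets both $F$ and $F^c$ with positive measure, hence $x \in \overline F \cap \overline{F^c} = \partial F$. This gives $\partial F = \partial^m F$ exactly. Finally, Federer's theorem (e.g.\ \cite[Theorem~3.61]{AFP}) asserts that for any set of locally finite perimeter $\partial^* F \subseteq \partial^m F$ and $\H^{N-1}(\partial^m F \setminus \partial^* F) = 0$, which combined with the equality $\partial F = \partial^m F$ yields $\partial F = \partial^* F$ modulo $\H^{N-1}$-null subsets.

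No serious technical obstacle arises; the only thing to keep in mind is that porosity is a property of the chosen representative of $F$, whereas $F^{(1)}$ and $\partial^* F$ depend only on the measure class, so the identifications $F^{(1)} = \text{int}(F)$ and $\partial F = \partial^m F$ are to be read with respect to the specific representative for which porosity is assumed.
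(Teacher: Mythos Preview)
Your argument is correct and essentially identical to the paper's own proof: both use porosity to bound the density at topological boundary points strictly between $0$ and $1$, deduce that $F^{(1)}$ consists of interior points, and then invoke Federer's theorem (the paper phrases it as ``$F^{(0)}\cup F^{(1)}$ fill the whole $\R^N\setminus\partial^* F$ up to zero $\H^{N-1}$-measure'') to conclude $\H^{N-1}(\partial F\setminus\partial^* F)=0$. Your explicit introduction of $\partial^m F$ and the equality $\partial F=\partial^m F$ is slightly more detailed than the paper's presentation, but the underlying ideas are the same.
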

\begin{proof}
The inclusion $\partial^* F \subseteq \partial F$ is always satisfied. Let now $x$ be any point in $\partial F$. By the definition of porosity, the density of $F$ at $x$ is between $\delta^N$ and $1-\delta^N$, so that $x\notin F^{(0)}\cup F^{(1)}$. Since $F^{(0)}\cup F^{(1)}$ fill the whole $\R^N\setminus\partial^* F$ up to zero $\haus^{N-1}$-measure, we deduce that $\haus^{N-1}(\partial F\setminus \partial^* F)=0$. In particular, we have observed that a point of $F^{(1)}$ cannot belong to $\partial F$, hence it must be either in the interior of $F$, or in the interior of $\R^N\setminus F$. Since the latter possibility is excluded by the positive density, we deduce that $F^{(1)}$ is open.
\end{proof}

We conclude with a $2$-dimensional property of porous sets without holes, that we formally define below. Also this property is not new, but we give a simple proof for completeness.

\begin{defn}[Holes]\label{defholes}
Let $F\subseteq\R^2$ be a Borel set with locally finite perimeter. We say that \emph{$F$ has a hole $U$} if there exists a bounded set $U\subseteq \R^2\setminus F$ with $\haus^2(U)>0$ and such that, up to $\haus^1$-negligible sets,
\[
\partial^* F = \partial^* U \cup \partial^* (F\cup U)\,.
\]
\end{defn}

In order to state the next result, we recall that a set $F$ is said \emph{connected in the measure theoretical sense} if, whenever one writes $F=F'\cup F''$ with two essentially disjoint sets $F',\, F''$ so that, up to $\haus^1$-negligible subsets, $\partial^* F = \partial^* F' \cup \partial^* F''$, it must be $\min \{ \haus^2(F'),\,\haus^2(F'')\}=0$.

\begin{lem}\label{noislPCC}
Let $F\subseteq\R^2$ be an open, porous set of finite (Euclidean) perimeter, connected in the measure theoretical sense and without holes. Then, $\partial F$ is a closed curve. More precisely, there exists an injective curve $\gamma:\S^1\to\R^2$ of finite length such that $\partial F=\gamma(\S^1)$. As a consequence, $F$ is connected also in the topological sense.
\end{lem}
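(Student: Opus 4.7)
The plan is to invoke the classical decomposition theorem for planar sets of finite perimeter (due to Ambrosio--Caselles--Masnou--Morel), which asserts that the reduced boundary of any bounded set of finite perimeter in $\R^2$ is, up to $\H^1$-negligible subsets, a countable disjoint union of rectifiable Jordan curves organized in a nested ``tree'' structure. Combined with the three structural hypotheses on $F$ (openness/porosity, indecomposability, absence of holes), this will collapse the decomposition to a single Jordan curve.

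First I would reduce to bounded $F$: since $F$ is porous and of finite perimeter, the argument can be localized in any large ball without loss of generality (indeed, the application in Lemma~\ref{lem1reg} already places $F$ inside a ball). Then I would apply Lemma~\ref{porope} to identify $\partial F$ with $\partial^* F$ up to $\H^1$-null sets, and to ensure that every point of $\partial F$ has $F$-density strictly between $0$ and $1$; moreover, the representative of $F$ as $F^{(1)}$ is open.

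Next I would invoke the decomposition theorem. Because $F$ is connected in the measure-theoretic sense (indecomposable), this decomposition takes the particular form
\[
F \;=\; \operatorname{Int}(J) \,\setminus\, \bigcup\nolimits_{k} \operatorname{Int}(J_k) \qquad \hbox{up to $\H^2$-null,}
\]
where $J$ is a single ``outer'' rectifiable Jordan curve and $\{J_k\}$ is a (possibly empty, at most countable) family of pairwise disjoint rectifiable Jordan curves, each $\operatorname{Int}(J_k)\subseteq\operatorname{Int}(J)$, and with $\partial^*F = J(\S^1)\cup\bigcup_k J_k(\S^1)$ up to $\H^1$-null. The ``holes'' of $F$ in the sense of Definition~\ref{defholes} correspond precisely to the inner Jordan regions $\operatorname{Int}(J_k)$: each such region is a bounded subset of $\R^2\setminus F$ with positive measure whose essential boundary $J_k(\S^1)$ lies in $\partial^* F$ and whose removal yields $\partial^*(F\cup\operatorname{Int}(J_k))=\partial^*F\setminus J_k(\S^1)$. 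Since $F$ is assumed hole-free, no such $J_k$ may exist, and therefore $F = \operatorname{Int}(J)$ up to $\H^2$-null.

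Finally, I would upgrade this to a set-theoretic identity. Both $F$ (by hypothesis) and $\operatorname{Int}(J)$ are open, so an equality modulo $\H^2$-null forces $F = \operatorname{Int}(J)$ (the symmetric difference is an open Lebesgue-null set, hence empty), and thus $\partial F = J(\S^1)$, proving the claim with $\gamma = J$. The main technical obstacle is the rigorous application of the planar decomposition theorem and, more subtly, the fact that the no-holes hypothesis must be formulated to exactly match the inner Jordan regions produced by the decomposition; once this correspondence is set up correctly, the indecomposability and openness yield the final identification with essentially no further work.
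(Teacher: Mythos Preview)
Your plan is correct and takes a genuinely different route from the paper. The paper argues by approximation: it takes smooth sets $F_j$ converging to $F$ in the strict $BV$ sense, reduces in Step~I to connected $F_j$ (using measure-theoretic indecomposability of $F$ to kill the small components), in Step~II to simply connected $F_j$ (using the no-holes hypothesis to kill the inner boundaries), and in Step~III passes to the uniform limit of the constant-speed boundary parametrizations $\gamma_j:\S^1\to\R^2$ to produce the desired curve $\gamma$. Your approach instead invokes the Ambrosio--Caselles--Masnou--Morel decomposition theorem as a black box and reads off the single Jordan curve directly. This is shorter and more conceptual, but outsources the real work to a substantial structure theorem; the paper's argument is more self-contained. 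One detail worth tightening in your writeup: the equality $F=\operatorname{Int}(J)$ as \emph{exact} open sets (not just up to null) does not follow merely from ``two open sets equal modulo null are equal'' (the symmetric difference of open sets need not be open); you need to use porosity once more to rule out points of $\partial F$ inside $\operatorname{Int}(J)$ and points of $F^{(0)}$ inside $\operatorname{Int}(J)$, which is straightforward but should be said.
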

\begin{proof}
Since $F$ has finite perimeter, by the compactness results in $BV$ we have a sequence of smooth sets $F_j$ such that
\begin{align}\label{BVstrict}
\Ve{F_j\Delta F} \to 0\,, && \Pe(F_j) \to \Pe(F)\,.
\end{align}

\step{I}{Reduction to the case of connected sets $F_j$.}
First of all, we want to reduce ourselves to the case when the sets $F_j$ are connected. Since $F_j$ is regular, we can write it as $F_j^1\cup F_j^2$, where $F_j^1$ is the connected component with biggest area, and $F_j^2$ is the union of all the other connected components. Observe that, by the isoperimetric inequality,
\[
\Pe(F_j) \geq \frac{2 \sqrt{\pi}}{\sqrt{\Ve{F_j^1}}}\, \Ve{F_j}\,,
\]
and since $\Ve{F_j}\to \Ve{F}$ and $\Pe(F_j)\to \Pe(F)$ we deduce that $\Ve{F_j^1}$ is bounded away from $0$. Up to a subsequence, we can assume that the characteristic functions of $F_j^1$ and $F_j^2$ weakly converge in $BV$ to the characteristic functions of two sets, that we call $F^1$ and $F^2$. Notice that $F^1\cap F^2=\emptyset$ and $F^1\cup F^2=F$. By the lower semicontinuity of the perimeter under weak $BV$ convergence and~(\ref{BVstrict}), we have
\[\begin{split}
\Pe(F) &\leq \Pe(F^1)+\Pe(F^2) \leq \liminf \Pe(F_j^1) + \liminf \Pe(F_j^2)\\
&\leq \liminf \big(\Pe(F_j^1)+\Pe(F_j^2)\big) = \liminf \Pe(F_j) = \Pe(F)\,,
\end{split}\]
and since $F$ is connected in the measure theoretical sense the first inequality is strict unless one of the two sets is negligible. Since the strict inequality is impossible and $F^1$ is not negligible by construction, we deduce that $\Ve{F^2}=\lim \Ve{F^2_j}=0$, and as a byproduct the above chain of inequalities implies also that $\Pe(F^2_j)\to 0$. Therefore, (\ref{BVstrict}) still holds true replacing the sets $F_j$ with the connected sets $F_j^1$, and this concludes the step.

\step{II}{Reduction to the case of sets $F_j$ with $\partial F_j$ connected.}
We want now to reduce ourselves to the case when the sets $F_j$ have connected boundaries (hence, they have no holes). Since $F_j$ is a smooth, connected set, it is possible to write it as $F_j=G_j\setminus U_j$, where $G_j$ has smooth, connected boundary, and $U_j\comp G_j$. In particular, $\Pe(F_j)=\Pe(G_j)+\Pe(U_j)$. Up to a subsequence, we can assume that the characteristic functions of $G_j$ and of $U_j$ weakly converge in $BV$ to the characteristic functions of two sets, that we call $G$ and $U$. Notice that $U\subseteq G$ and that $F=G\setminus U$, hence by lower semicontinuity of the perimeter we have
\[\begin{split}
\Pe(F)&=\lim \Pe(F_j) = \lim \big(\Pe(G_j) + \Pe(U_j)\big)\\
&\geq \liminf \Pe(G_j) + \liminf \Pe(U_j)
\geq \Pe(G) + \Pe(U)\\
&= \Pe(F\cup U) + \Pe(U)\,.
\end{split}\]
Since $F$ has no holes, we deduce that $U$ is negligible, and that $\Pe(U_j)\to 0$. As a consequence, we can replace the sets $F_j$ with the sets $G_j$ and~(\ref{BVstrict}) still holds true, which concludes also this step.

\step{III}{Conclusion.}
By steps~I and~II, we have a sequence of sets $F_j$ satisfying~(\ref{BVstrict}) and having a closed, regular curve as boundary. There are then smooth functions $\gamma_j:\S^1\to\R^2$, injective and with $|\gamma_j'|$ constant (thus constantly equal to $\Pe(F_j)/2\pi$). Up to subsequences, the functions $\gamma_j$ uniformly converge to a Lipschitz function $\gamma:\S^1\to\R^2$. By construction $\partial F\subseteq \gamma(\S^1)$, thus
\[
\Pe(F) = \haus^1(\partial F)\leq \haus^1(\gamma) \leq \liminf\haus^1(\gamma_j) =\liminf \Pe(F_j) = \Pe(F)\,.
\]
We deduce that $\partial F=\gamma(\S^1)$, and the curve $\gamma$ is injective since $F$ is connected.
\end{proof}

\section*{Acknowledgments}
The authors acknowledge the support of the GNAMPA--INdAM projects \emph{Problemi isoperimetrici in spazi Euclidei e non} (n.\ UUFMBAZ-2019-000473 11-03-2019), \textit{Problemi isoperimetrici con anisotropie} (n.\ U-UFMBAZ-2020-000798 15-04-2020) and \textit{Analisi geometrica in strutture subriemanniane} (n. CUP\_E55F2200\-0270001).

The first author also acknowledges the support received from the European Union's Horizon 2020 research and innovation programme under the \textit{Marie Sk\l o\-dowska Curie grant n.\ 794592} and from the ANR-15-CE40-0018 project \textit{SRGI -- Sub-Riemannian Geometry and Interactions}.
The third author also acknowledges the support of the \textit{ERC Starting
Grant 676675 FLIRT – Fluid Flows and Irregular Transport} and has received funding from the European Research Council (ERC) under the European Union’s Horizon 2020 research and innovation program (grant agreement No.~945655).

\end{document}